\numberwithin{equation}{section}
\newtheorem{theorem}{Theorem}[section]
\newtheorem{lemma}[theorem]{Lemma}
\newtheorem{remark}[theorem]{Remark}
\newtheorem{proposition}[theorem]{Proposition}
\newtheorem{definition}[theorem]{Definition}
\newtheorem{assumption}[theorem]{Assumption}
\def\g{\gamma}
\def\de{\delta}
\def\e{\varepsilon}
\newcommand{\ddt}{\frac{d}{dt}}
\newcommand{\R}{{\mathbb{R}}}
\newcommand{\A}{{\mathcal{A}}}
\newcommand{\dist}{\mathrm{dist}}
\newcommand{\Loj}{\L ojasiewicz}
\newcommand{\Dom}{[a_1,a_2]}
\newcommand{\B}{\mathcal{B}}
\newcommand{\Aarea}{\mathcal{B}_{\eta}}
\newcommand{\Circ}{\mathrm{Circ}}
\newcommand{\Circeta}{\mathrm{Circ}_\eta}
\newcommand{\Crit}{\mathcal{C}^\ast_\eta}
\newcommand{\Min}{\mathcal{M}^\ast_\eta}
\newcommand{\pd}{\partial_\de}
\newcommand{\kappamaxSi}{{\kappa_{max}(\Sigma)}}
\newcommand{\AreaSi}{A_{\Si}}
\newcommand{\Areahat}{A_{\hat\Si}}
\newcommand{\si}{\sigma}
\newcommand{\na}{\nabla}
\newcommand{\Om}{\Omega}
\newcommand{\Si}{\Sigma}
\newcommand{\ga}{\gamma}
\newcommand{\eps}{\varepsilon}
\newcommand{\al}{\alpha}
\newcommand{\half}{{\frac12}}
\newcommand{\thalf}{{\tfrac12}}
\newcommand{\beq}{\begin{equation}}
\newcommand{\eeq}{\end{equation}}
\newcommand{\beqs}{\begin{equation*}}
\newcommand{\eeqs}{\end{equation*}}
\newcommand{\beqa}{\begin{equation}\begin{aligned}}
\newcommand{\eeqa}{\end{aligned}\end{equation}}
\newcommand{\beqas}{\begin{equation*}\begin{aligned}}
\newcommand{\eeqas}{\end{aligned}\end{equation*}}
\newcommand{\LL}{\mathcal{L}}
\newcommand{\LLeta}{{\mathcal{L}_\eta}}
\newcommand{\Criteta}{\mathcal{C}^\ast_\eta}
\newcommand{\abs}[1]{\vert#1\vert} 
\newcommand{\pihalf}{{\tfrac{\pi}{2}}}
\newcommand{\norm}[1]{\Vert#1\Vert} 
\newcommand{\peps}{\partial_{\eps}}
\newcommand{\ddeps}{\frac{d}{d\eps}}
\newcommand{\areaC}{\bar{\mathfrak{c}}}
\newcommand{\IP}{{I_{\Om}}}
\newcommand{\setmins}{\mathcal{M}_\eta^{\Omega}}
\newcommand{\phiturn}{\phi_{\text{turn}}}
\title[Quantitative estimates for the relative isoperimetric problem and its gradient flow in the plane]{Quantitative estimates for the relative isoperimetric problem and its gradient flow outside convex bodies in the plane}
\author{Elena M\"{a}der-Baumdicker}
\address{Elena M\"ader-Baumdicker: Institute of Mathematics, Freie Universit\"at Berlin, Arnimallee 3, 14195 Berlin, Germany, \textit{elena.maeder-baumdicker@fu-berlin.de}}
\author{Robin Neumayer}
\address{Robin Neumayer: Carnegie Mellon University, 5000 Forbes Avenue, Pittsburgh, PA 15201, USA, \textit{neumayer@cmu.edu}}
\author{Jiewon Park}
\address{Jiewon Park: Korea Advanced Institute of Science and Technology (KAIST), 291 Daehak-ro, Yuseong-gu, 34141 Daejeon, South Korea, \textit{jiewonpark@kaist.ac.kr}}
\author{Melanie Rupflin}
\address{Melanie Rupflin: Mathematical Institute, University of Oxford, Oxford OX2 6GG, United Kingdom,  \textit{rupflin@maths.ox.ac.uk}} 
\subjclass[2020]{58E35, 53E10, 49Q10}
\begin{document}
\maketitle
\begin{abstract}
    We prove three related quantitative results for the relative isoperimetric problem outside a convex body $\Om$ in the plane: (1) \Loj\ estimates and quantitative rigidity for critical points, (2) rates of convergence for the gradient flow, and (3) quantitative stability for minimizers. These results come with explicit constants and optimal exponents/rates, and hold whenever a simple  two-dimensional auxiliary variational problem for circular arcs outside of $\Om$ is nondegenerate. The proofs are inter-related, and in particular, for the first time in the context of isoperimetric problems, a flow approach is used to prove quantitative stability for minimizers. 
\end{abstract}

\section{Introduction}

Two central themes in the study of geometric variational problems are stability and dynamics: how does the energy grow near a minimizer or critical point, and how rapidly does the associated gradient flow converge to equilibrium? It is well known that both properties are closely linked to nondegeneracy, and when the second variation at any critical point is nondegenerate modulo symmetries, one expects a quadratic stability estimate for minimizers and exponential convergence to equilibrium of the gradient flow.
 Verifying nondegeneracy, however, is in practice often delicate or intractable, especially in geometric settings where the space of competitors is constrained by curvature, topology, or boundary behavior. 

This paper investigates these themes in the context of the exterior isoperimetric problem in the plane. Given a convex body  $\Omega \subset \mathbb{R}^2$, i.e. a compact, convex set with nonempty interior, with $C^2$ boundary $\Si$ and a prescribed area $\eta > 0$, consider the isoperimetric problem
\begin{equation}
\label{eqn: isop problem def}
\IP(\eta) = \inf\left\{P(E; \mathbb{R}^2 \setminus \Omega) \ :\ E \subset \mathbb{R}^2 \setminus \Omega,\ \ |E| = \eta \right\}.
\end{equation}
Here $P(E; \mathbb{R}^2 \setminus \Omega)$ is the relative perimeter, which is equal to $\mathcal{H}^1(\partial E \setminus \Si)$ when $E$ has $C^1$ boundary; see Section~\ref{sec: preliminaries}.

An important aspect of this problem is that we can reduce the question of the nondegeneracy of the second variation of critical points for the infinite-dimensional problem \eqref{eqn: isop problem def} to the nondegeneracy of the Hessian for critical points of an explicit {two-dimensional} variational problem over circular arcs; see \Cref{ass:LojassIntro} below. 
For any convex body $\Omega$ and area constraint $\eta>0$ for which this two-dimensional nondegeneracy condition holds, we establish a suite of sharp results predicted by the second variation theory: (1)  quantitative rigidity and \L ojasiewicz estimates, 
 (2) exponential convergence of the associated gradient flow, 
and (3) quadratic stability for minimizers.

We prove these three main results in an interconnected manner. In particular, to address quantitative stability---that is, the question of whether a set almost achieving the infimum in \eqref{eqn: isop problem def} must be quantitatively close to a minimizer---we evolve the boundary of a set by the free boundary area-preserving curve shortening flow and then integrate out a \L ojasiewicz-type estimate along the trajectory.
Flow-based approaches to proving quantitative stability have been developed in recent years in the contexts of maps from $\mathbb{S}^2$ to $\mathbb{S}^2$ \cite{Topping23, Rupflin23}
and Sobolev-type inequalities \cite{BonforteEtAl}, but to our knowledge this is the first application of the method in the context of isoperimetric problems. This constructive strategy yields sharp estimates with explicit constants, and puts stability, quantitative rigidity, and gradient flow convergence in a unified analytic framework.

For any $\eta>0$, the collection of minimizers $\setmins$ of \eqref{eqn: isop problem def} (among sets of finite perimeter, see Section~\ref{sec: preliminaries}) is nonempty by the direct method.  The first variation shows that
 the relative boundary $\partial E_* \setminus\Sigma$
 of any minimizer $E_* \in \setmins$ is a union of equal-radii circles and circular arcs meeting $\Sigma=\partial \Om$ orthogonally, and a simple competitor argument then ensures that $E_*$ is connected\footnote{In higher dimensions, it is not known whether isoperimetric sets are connected, while for the interior relative isoperimetric problem, they are known to be connected \cite{SternbergZumbrun}.} and intersects $\Si$ nontrivially.
 Thus, the boundary of $E_*$ is the union of a single circular arc $c^*$ meeting $\Si$ orthogonally and a subarc $\si_{c^*}$ of $\Si$.

More generally, we consider oriented immersed curves $\g$ that lie outside of $ \Om^\circ$ and intersect $\Si$ only at their endpoints $x_1(\gamma)$ and $x_2(\gamma)$; we let $\B$ denote the collection of such  curves, see \eqref{def:B}.
We extend the notion of (signed) enclosed area to any $\g\in\B$ by letting $A_\Si(\gamma):=\text{Area}(\gamma+\si_\gamma)$ for the unique subarc $\si_\gamma$ of $\Si$ for which the concatenation $\gamma+\si_\gamma$ is
contractable in $\R^2\setminus \Om^\circ$.
 For positively oriented embedded curves $\gamma \in \B,$ this of course agrees with the area of the set $E_\gamma$ whose relative boundary is given by  $\gamma$.
We let
\beq
  \B_\eta:=\{\gamma\in \B: \abs{A_\Si(\gamma)}=\eta \}\,
\eeq
be the collection of such (oriented) curves with prescribed enclosed area $A_\Si$. In addition to minimizers of the isoperimetric problem \eqref{eqn: isop problem def}, or equivalently, minimizers of the length among curves in $\B_\eta$, we consider critical points of the length functional $L$ in this class.
As area preserving variations are characterized by $\int_{\gamma_t} \partial_t \g_t \cdot \nu_{\g_t}\,ds_{\gamma_t} = 0$, c.f. \eqref{eqn: first var of area}, 
the usual first variation formula for the length can be written as  
\beq
\label{eq:dL-area-preserv}
dL(\gamma)(X)=
-\int (\kappa_\gamma-\bar \kappa_\gamma) \langle \nu_\gamma, X\rangle \, ds_\gamma+\langle X(x_2),\tau_\gamma(x_2)\rangle -\langle X(x_1),\tau_\gamma(x_1)\rangle
\eeq
for vector fields $X = \partial_t \g_t$ induced by such variations. In particular, the set of  critical points of the length functional in $\B_\eta$ is given by 
\beq \label{def:crit-points}
\Crit:=\{\gamma\in \B_\eta: \text{ circular arc intersecting $\Si$ orthogonally}\}\,.
\eeq
Here and in the sequel, we use the convention that $\tau_\g$ is the unit tangent of the oriented curve $\g$, $\nu_\g$ is the normal obtained by rotating $\tau_\g$ counterclockwise by $\pi/2$, $\bar{\kappa}_\g$ is the average of the curvature  $\kappa_\g$, and $s_\g$ is the arclength parameter of $\g$.

The quantitative results established in this paper for the aforementioned minimizers and critical points  are closely connected to our third main focus: the asymptotic behavior of the associated gradient flow, i.e.  the free boundary area-preserving curve shortening flow, or freeAPSCF, introduced by the first author in \cite{EMB2}.  
This natural area-preserving gradient flow of the length evolves families of curves $\g_t$ 
by 
\[
\partial_t \ga_t  = (\kappa_{\g_t} - \bar{\kappa}_{\g_t}) \nu_{\g_t}
\]
subject to the constraint that the curves intersect  $\Si$ orthogonally and from the outside of $\Om$ at their  endpoints $x_{1}(\gamma_t),x_2(\gamma_t)\in \Si$, see \eqref{equ:apcsf} below for the precise definition.  
In general, this flow can be quite poorly behaved: it neither preserves embeddedness nor remains entirely outside of $\Omega$, and singularities may develop in finite time. However, in \cite{EMB2}, the first author established conditions on the initial data guaranteeing that the flow exists and remains outside of $\Om$ for all time.

A fundamental question addressed in this paper is the rate of convergence to equilibrium for such solutions; see Theorem~\ref{mainthm-flow} below. The key ingredient to prove this is the quantitative control on the behavior of almost critical points of the length functional on the set $\B_\eta$; see Theorem~\ref{thm:loj}. 
This flow, in turn, will be the fundamental tool that we use to establish quantitative stability of minimizers of \eqref{eqn: isop problem def} in Theorem~\ref{thm: stability}, as it provides a natural way of deforming a curve in a way that preserves the enclosed area $A_\Si$ while decreasing the length, thereby improving the isoperimetric ratio. 

We prove results related to all three of the above problems with optimal exponents (respectively rates) in particular in the case where $\Si$ is a circle and more generally whenever $\Si=\partial \Om$ satisfies a simple non-degeneracy condition concerning the behavior of the length functional on the set of circular arcs
\beq \label{def: circ eta}
\Circeta:=\{c\in \Aarea: c \text{ subarc of  a circle intersecting } \Si \text{ transversally}\}.
\eeq
By the implicit function theorem $\Circeta$
is a smooth $2$-dimensional manifold, which we may locally parametrize by the centers $z \in \R^2$ of the defining circles of these arcs $c$. The restriction of the length functional to $\Circeta$ 
can hence locally be written as a function $z \mapsto \LL_\eta(z) = L(c)$ of two variables, compare also Section \ref{subsec:circular-arcs}.

In our main results below, we assume either that $\Si$ is a circle, or that the pair $(\Si,\eta)$ satisfies the following nondegeneracy assumption corresponding to the two-dimensional function $\LLeta$.
\begin{assumption}\label{ass:LojassIntro} Given $\eta>0$ and $\Sigma$, we ask that the Hessian $d^2\LLeta(z^*)$ of $\LLeta$ is non-degenerate at any critical point $z^*$ 
of $\LLeta$, i.e. that the eigenvalues of this symmetric $2\times 2$ matrix are non-zero. 
\end{assumption}

We show that, for a given support curve $\Sigma$ and a critical arc $c^\ast$, Assumption~\ref{ass:LojassIntro} can be expressed explicitly in terms of the opening angle of $c^\ast$, its curvature $\kappa_{c^\ast}$, and the curvatures $\kappa_\Sigma(x_{1})$ and $\kappa_\Sigma(x_2)$ at the endpoints $x_{1,2}$ of $c^\ast$; see Lemma~\ref{lemma:Hessian}. In the special case where $\kappa_\Sigma(x_{1})= \kappa_\Sigma(x_{2})$, the nondegeneracy condition admits a geometric interpretation in terms of the osculating circles of $\Sigma$ at $x_1$ and $x_2$. A direct computation shows that this condition is satisfied by a broad class of support curves $\Sigma$, for example any ellipse.

Our first main result is the following quantitative rigidity and \Loj\ estimate for critical points.

\begin{theorem}
    \label{thm:loj}
    Let  $\Om\subset\R^2$  be  a convex body with $C^2$ boundary $\Sigma =\partial \Om $, fix $\eta>0$ and assume that either $\Sigma$ is a circle or the pair $(\Sigma,\eta)$ satisfies Assumption~\ref{ass:LojassIntro}. Then for each $\bar{L}>0$ and  $\bar\phi>0$, there exist constants $C_{0,1} =C_{0,1}(\eta, \Sigma, \bar{L}, \bar\phi)$ such that the following holds. 
For any 
curve $\gamma \in \Aarea$ with length $L(\gamma) \leq \bar{L}$ and turning angle $\abs{\int_\gamma \kappa_\gamma ds_\gamma}\leq \bar\phi$, we may find $c^* \in \Crit$ such that
    \begin{equation}\label{est:Loj-distance}
            \| \gamma - c^*\|_{C^1(ds_\gamma)} \leq C_0 \big[ \|\kappa_{\gamma} - \bar{\kappa}_{\gamma}\|_{L^2(ds_\gamma)}+\abs{\al_1(\gamma)-\pihalf}+\abs{\al_2(\gamma)-\pihalf}\big]
    \end{equation}
    and  
    \beq\label{est:Loj-length}
    \abs{L(\gamma)-L(c^*)}\leq C_1 \big[ \|\kappa_{\gamma} - \bar{\kappa}_{\gamma}\|_{L^2(ds_\gamma)}+\abs{\al_1(\gamma)-\pihalf}+\abs{\al_2(\gamma)-\pihalf}\big]^2
    \eeq
    where $\al_{1,2}\in [0,\pi]$ denote the intersection angles at which $\gamma$ intersects $\Si$.
\end{theorem}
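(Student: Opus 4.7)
The plan is to reduce the infinite-dimensional \Loj\ estimate to a finite-dimensional statement on the $2$-manifold $\Circeta$ of circular arcs enclosing area $\eta$, where Assumption~\ref{ass:LojassIntro} directly applies. The proof proceeds in three conceptual steps: approximate $\gamma$ by a circular arc $c\in\Circeta$; run a Morse/\Loj\ argument on $\Circeta$ to produce a true critical point $c^*\in\Crit$ from $c$; then combine via the triangle inequality and a careful decomposition of $L(\gamma)-L(c^*)$.

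\textbf{Step 1 (approximation by a circular arc).} Given $\gamma\in\Aarea$ with $L(\gamma)\leq \bar L$, set $\epsilon:=\|\kappa_\gamma-\bar\kappa_\gamma\|_{L^2(ds_\gamma)}$. I would first build an auxiliary circular arc $\tilde c$ of constant curvature $\bar\kappa_\gamma$ over $[0, L(\gamma)]$ with $\tilde c(0)=\gamma(0)$ and $\tau_{\tilde c}(0)=\tau_\gamma(0)$. From $|\tau_\gamma(s)-\tau_{\tilde c}(s)|\leq \int_0^s|\kappa_\gamma-\bar\kappa_\gamma|\,ds'$ and Cauchy--Schwarz, $\|\gamma-\tilde c\|_{C^1(ds_\gamma)}\leq C\sqrt{\bar L}\,\epsilon$. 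The turning-angle bound together with a lower bound on $L(\gamma)$ (from the isoperimetric inequality and $|A_\Sigma(\gamma)|=\eta$) keeps $\bar\kappa_\gamma$ bounded, so $\tilde c$ is uniformly nondegenerate. In general $\tilde c$ has its endpoints off $\Sigma$ and encloses area $\neq \eta$; an implicit function theorem argument on the defining center $z\in\R^2$ and radius then produces an honest $c\in\Circeta$ with $\|\tilde c - c\|_{C^1}=O(\epsilon)$ and $|\alpha_i(c)-\alpha_i(\gamma)|\leq C\epsilon$ for $i=1,2$.

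\textbf{Step 2 (finite-dimensional \Loj\ and conclusion).} Under Assumption~\ref{ass:LojassIntro}, the nondegeneracy of $d^2\LLeta(z^*)$ at every critical point $z^*$ gives local Morse-type bounds $|\nabla\LLeta(z)|\geq \delta |z-z^*|$ and $|\LLeta(z)-\LLeta(z^*)|\leq C|z-z^*|^2$. Applying \eqref{eq:dL-area-preserv} to the local parametrization of $\Circeta$ by centers $z$ identifies $|\nabla\LLeta(z)|\asymp |\alpha_1(c(z))-\pihalf|+|\alpha_2(c(z))-\pihalf|$: since $\kappa_{c(z)}$ is constant, the interior integral vanishes and only the boundary terms survive, measuring exactly the deviation from orthogonality. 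When $\Sigma$ is a circle, Assumption~\ref{ass:LojassIntro} is replaced by transversal nondegeneracy along the $1$-parameter rotational orbit of critical points, and an explicit computation gives the same estimates modulo that orbit. Compactness under the length and turning-angle bounds upgrades everything to uniform constants. Feeding the arc $c$ from Step 1 into these inequalities produces $c^*\in\Crit$ with $\|c-c^*\|_{C^1}\leq C(|\alpha_1(c)-\pihalf|+|\alpha_2(c)-\pihalf|)$, and combining with Step 1 yields \eqref{est:Loj-distance}. For \eqref{est:Loj-length}, I decompose $L(\gamma)-L(c^*)=[L(\gamma)-L(c)]+[L(c)-L(c^*)]$; the second term is $O((|\alpha_1-\pihalf|+|\alpha_2-\pihalf|)^2)$ by the quadratic bound, while the first is $O(\epsilon^2)$ by integrating \eqref{eq:dL-area-preserv} along an area-preserving homotopy $\gamma_s$ from $\gamma$ to $c$, using that $\|\kappa_{\gamma_s}-\bar\kappa_{\gamma_s}\|_{L^2}=O(\epsilon)$ along the homotopy and that the boundary terms cancel up to second order.

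\textbf{Main obstacle.} The principal difficulty lies in Step 1: producing a genuine $c\in\Circeta$ at $C^1$-distance $O(\epsilon)$ from $\gamma$ while simultaneously satisfying both the area constraint and the endpoint condition on $\Sigma$, with controlled effect on the intersection angles and with uniform implicit-function surjectivity across the compact set of configurations allowed by $L\leq\bar L$ and $|\text{turning}|\leq\bar\phi$. A second delicate point is achieving the second-order cancellation $|L(\gamma)-L(c)|=O(\epsilon^2)$ rather than the naive $O(\epsilon)$ from first-order Taylor: this forces $c$ to be chosen so that the first variation formula~\eqref{eq:dL-area-preserv} contributes no linear term along the connecting homotopy, which is exactly where the area-preserving correction and the precise placement of $c$ (so that its boundary data match $\gamma$ up to terms already of order $\epsilon$) must be balanced against each other.
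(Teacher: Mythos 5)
Your overall strategy matches the paper's: Section~3 of the paper first approximates $\gamma$ by a nearby circular arc in $\Circ_\eta$ (Lemma~\ref{lemma:reduction} and Proposition~\ref{cor:reduction}, exactly your Step~1), then runs a finite-dimensional Morse-type argument on the two-manifold $\Circ_\eta$ parametrized by centers $z$, using Assumption~\ref{ass:LojassIntro} or, for the circle, the explicit computation $\partial_{z_1}^2\mathcal{L}_\eta(z^*)>0$ (Lemma~\ref{lemma:2d}, your Step~2). Your distance estimate \eqref{est:Loj-distance} is obtained essentially as in the paper and I see no essential gap there, apart from the fact that the paper deliberately avoids compactness to keep constants explicit, while you invoke it; since the compactness needed is only on the finite-dimensional $\Circ_\eta$ this is a matter of taste rather than correctness.

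The length estimate \eqref{est:Loj-length} is where your proposal is genuinely incomplete, and you yourself flag the key difficulty without resolving it. You propose to bound $|L(\gamma)-L(c)|$ by integrating the area-preserving first variation formula \eqref{eq:dL-area-preserv} along ``an area-preserving homotopy $\gamma_s$ from $\gamma$ to $c$.'' But such a homotopy is not given: the naive straight-line interpolation between $\gamma$ and $c$ fails to keep the endpoints on $\Sigma$ (so \eqref{eq:dL-area-preserv} is not applicable), and an abstract homotopy in $\mathcal{B}_\eta$ does not come with the quantitative controls you need, namely $\|\kappa_{\gamma_s}-\bar\kappa_{\gamma_s}\|_{L^2}=O(\eps)$, $|\alpha_i(\gamma_s)-\pi/2|=O(\eps(\gamma))$, and $\|\partial_s\gamma_s\|_{C^1}=O(\eps)$ uniformly in $s$. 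This is precisely where the paper introduces a new idea that your outline lacks: one replaces $\Sigma$ by a modified support curve $\hat\Sigma$ obtained by substituting the short arcs of $\Sigma$ between $x_i(\gamma)$ and $x_i(c^*)$ with the corresponding chords. Along $\hat\Sigma$ the straight-line interpolation $\gamma_t=c^*+t(\gamma-c^*)$ is admissible (its endpoints slide along the flat segments of $\hat\Sigma$), the modified enclosed area $A_{\hat\Sigma}$ changes only by $O(\eps(\gamma)^3)$ (equation \eqref{est:Area-hat-Area}), and one can then work with the full (not area-restricted) first variation of $L$, showing $|dL(c^*)(X)|\leq C\eps(\gamma)^2$ for $X=\gamma-c^*$ and a Lipschitz estimate $|dL(\gamma_t)(X)-dL(c^*)(X)|\leq C\eps(\gamma)^2$. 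Without something equivalent to this construction, your claim that the boundary terms ``cancel up to second order'' is an assertion rather than an argument, and the length estimate does not follow from what you have written. (A secondary, more minor slip: you write $|L(\gamma)-L(c)|=O(\eps^2)$ with $\eps=\|\kappa_\gamma-\bar\kappa_\gamma\|_{L^2}$, but the boundary terms are only $O(\eps\cdot\eps(\gamma))$, so the natural bound is in terms of the full deficit $\eps(\gamma)$; this is harmless for the theorem but worth noting.)
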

We carry out the proof of this theorem using explicit geometric constructions, which in particular avoid any use of compactness arguments. As such, the constants $C_{0,1}$ appearing in the above theorem are computable in terms of  basic geometric and analytic quantities associated to problem.\footnote{An inspection of the proof shows that the constants can be bounded explicitly in terms of $\eta$, the $C^2$ norm of the arclength parametrization $\si$ of $\Si$, the modulus of continuity of $\si''$, and (for $\Si$ not a circle) the spectral gap around $0$ of the $2\times 2$ matrix $d^2\LLeta(z^*)$ at critical points $z^*$.} 
Here and in the following we use the convention that 
the $C^1(ds_\gamma)$ distance $\norm{\gamma-\tilde \gamma}_{C^1(ds_\gamma)}$ of a curve $\tilde \gamma$  to a given curve $\gamma$ is computed using the reparametrisation of $\tilde \gamma$ over the interval $[0,L(\gamma)]$ with constant speed.
The expression 
\beq \label{def:eps-gamma}
\eps(\gamma):= \|\kappa_{\gamma} - \bar{\kappa}_{\gamma}\|_{L^2(ds_\gamma)}+\abs{\al_1(\gamma)-\pihalf}+\abs{\al_2(\gamma)-\pihalf}
\eeq
appearing in the above result is equivalent to the norm of the gradient of $L$ on $\Aarea$, as the first variation of the length along area preserving vector fields is given by  \eqref{eq:dL-area-preserv} above 
and as $\abs{\langle X(x_i),\tau_\Si(x_i)\rangle}=\abs{X(x_i)}|\cos(\al_i)|$ is bounded from above and below by a multiple of $ \abs{X(x_i)}\abs{\al_i-\pihalf}$. Thus \eqref{est:Loj-distance} is a quantitative form of the classification of critical points in \eqref{def:crit-points}.

Quantitative rigidity (or ``quantitative Alexandrov'') estimates for isoperimetric problems along the lines of \eqref{est:Loj-distance} have been investigated intensively over the past decade---see, e.g, \cite{CiraoloMaggi, Julin2D, JulinNiin, JuMoOrSp24, JiaZhang, Poggesi24}---motivated in part by applications to flows \cite{Julin2D, JulinNiin, JuMoOrSp24}.
For higher dimensional isoperimetric problems, quantitative ridigity theorems must be formulated to account for the possibility of bubbling; this behavior is precluded in the present context, essentially because controlling $\e(\g)$ amounts to controlling the second fundamental form in a super-critical norm.
 The one-dimensional nature of the problem allows for a hands-on proof of \eqref{est:Loj-distance}: we associate to $\g$ an explicit, quantitatively close circular arc $c\in\Circeta$, and then prove \eqref{est:Loj-distance} for curves in $\Circeta$ by using Assumption~\ref{ass:LojassIntro} or, in the case that $\Si$ is a circle, direct planar geometric arguments.

The second \Loj\ estimate \eqref{est:Loj-length} will be  obtained from \eqref{est:Loj-distance} using a first variation argument and will be
the key tool in the proof of our second main result, which establishes exponential convergence to equilibrium of global solutions to the free boundary area-preserving curve shortening flow. 
We recall that, 
starting from the seminal work of Simon \cite{SimonLoj}, \Loj\ estimates have been used as a powerful tool in the analysis of both asymptotics and  singularities for  gradient flows in myriad settings.

\begin{theorem} \label{mainthm-flow}
Let $\Omega$ be  convex body with $C^{2}$ boundary $ \Si=\partial \Omega $, fix $\eta, \bar{L}, \bar{\phi}>0$ and assume that either $\Sigma$ is a circle or the pair $(\Sigma,\eta)$ satisfies Assumption~\ref{ass:LojassIntro}.
Let $\gamma: \Dom\times [0,\infty)\to\R^2$
be a global-in-time solution to the flow  
\begin{align}
\begin{cases}
 \partial_t\gamma_t =(\kappa_{\gamma_t}-\bar{\kappa}_{\gamma_t})\nu_{\gamma_t} & \text{ on } \Dom\times [0,\infty),\\
 \gamma_t(a_1),\  \gamma_t(a_2) \in \Sigma  & \text{ on } [0,\infty),\\
 \tau_{\gamma_t}(a_1) = - \nu_\Sigma (\gamma_t(a_1)),  \ \ \tau_{\gamma_t}(a_2)  = \nu_\Sigma (\gamma_t(a_2)) & \text{ on } [0,\infty).
\end{cases}
\label{equ:apcsf}
\end{align}
Assume that the turning angle remains bounded by $|\int\kappa_{\g_t} ds_{\gamma_t}| \leq \bar\phi$ and that $\gamma_t$ intersects $\Omega$ only at the endpoints $x_{1,2}(\gamma_t)\in \Si$  for each $t$.
Then there is a unique arc $c^*\in \Crit$ such that $\g_t$ converges smoothly exponentially to $c^*.$ 
More precisely, for every $k \in \mathbb{N},$ there exists $C_k=C_k(\gamma_0,\Si, \eta, \bar{\phi}) $ and $c_k=c_k(\gamma_0,\Si, \eta, \bar{\phi})>0 $ 
so that 
\begin{align}\label{eqn: exp conv}
\|\tilde\gamma_t - \tilde c^*\|_{C^k([0,1])} \leq C_k \exp(- c_k t) \ \ \ \text{ for all } t\in [1,\infty)
\end{align}
for the constant speed parametrizations $\tilde {\g}_t$ and $\tilde{c}^*$ of $\g_t$ and $c^*$ on $[0,1]$. 
Furthermore, there is a constant
 $C= C(\Si,\eta,\bar{L},\bar{\phi})$ such that if $L(\g_0) \leq \bar{L},$ then
 the total $L^2$-distance traveled by both the original flow and the 
 reparametrized flow  is bounded by
\begin{equation}\label{est:dist-total}
   \int_{0}^{\infty} \|\partial_t \gamma_t\|_{L^2(ds_{\gamma_t})} dt  + 
   \int_{0}^{\infty} \|\partial_t \tilde\gamma_t\|_{L^2([0,1])} dt 
   \leq C(L(\gamma_{0})- L(c^*))^{1/2}.
        \end{equation}
\end{theorem}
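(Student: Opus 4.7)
The orthogonal boundary conditions $\tau_{\gamma_t}(a_i)=\pm\nu_{\Sigma}$ force $\al_{1,2}(\gamma_t)\equiv\pihalf$ along the flow, so \eqref{eq:dL-area-preserv} applied with $X=\partial_t\gamma_t=(\kappa_{\gamma_t}-\bar\kappa_{\gamma_t})\nu_{\gamma_t}$ yields
$$\ddt L(\gamma_t)=-\|\kappa_{\gamma_t}-\bar\kappa_{\gamma_t}\|_{L^2(ds_{\gamma_t})}^2=-\eps(\gamma_t)^2.$$
Integrating gives $\int_0^\infty\eps(\gamma_t)^2\,dt=L(\gamma_0)-L_\infty$ with $L_\infty:=\lim_{t\to\infty}L(\gamma_t)$. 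Picking $t_n\to\infty$ with $\eps(\gamma_{t_n})\to 0$ and invoking Theorem~\ref{thm:loj} I obtain $c_n^*\in\Crit$ with $\gamma_{t_n}\to c_n^*$ in $C^1$ and $L(c_n^*)\to L_\infty$. Under Assumption~\ref{ass:LojassIntro} the critical set is discrete inside the compact regime $\{L\leq\bar L,\ |\int\kappa|\leq\bar\phi\}$, so $c_n^*\equiv c^*$ eventually and $L(c^*)=L_\infty$.

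\textbf{Lojasiewicz--Gronwall reduction.} Let $g>0$ be the minimum gap between consecutive critical values of $L|_{\Crit}$ in the above regime, and set $\delta_0:=g/3$, $\eps_0:=\sqrt{\delta_0/C_1}$ (with $C_1$ as in \eqref{est:Loj-length}), and $\psi(t):=L(\gamma_t)-L(c^*)\geq 0$. The key claim is that $\psi(t)\leq C_1\eps(\gamma_t)^2$ whenever $\psi(t)\leq\delta_0$. Indeed, if $\eps(\gamma_t)\leq\eps_0$, the $c^*(t)$ furnished by \eqref{est:Loj-length} satisfies $|L(c^*(t))-L(c^*)|\leq\psi(t)+C_1\eps^2\leq 2\delta_0<g$, which by discreteness forces $L(c^*(t))=L(c^*)$ and hence $\psi(t)\leq C_1\eps^2$ from \eqref{est:Loj-length}; otherwise $\psi(t)\leq\delta_0=C_1\eps_0^2<C_1\eps(\gamma_t)^2$ trivially. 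Setting $T_0:=\inf\{t:\psi(t)\leq\delta_0\}$ (finite by Step 1), the differential inequality $\psi'=-\eps^2\leq-\psi/C_1$ holds on $[T_0,\infty)$ and yields $\psi(t)\leq\psi(T_0)\,e^{-(t-T_0)/C_1}$, the exponential decay of the energy defect.

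\textbf{Total distance and smooth convergence.} For $t\geq T_0$, the pointwise bound $\eps\geq\sqrt{\psi/C_1}$ gives
$$\int_{T_0}^\infty\eps(\gamma_s)\,ds\leq\sqrt{C_1}\int_{T_0}^\infty\frac{-\psi'(s)}{\sqrt{\psi(s)}}\,ds=2\sqrt{C_1\,\psi(T_0)}\leq 2\sqrt{C_1\,\psi(0)}.$$
To bound $\int_0^{T_0}\eps\,ds$, I partition $[0,T_0]$ according to the level of $L(\gamma_t)$ relative to the finitely many critical values $L(c^*)=v_0<v_1<\cdots<v_N\leq L(\gamma_0)$. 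On inner bands $L(\gamma_t)\in[v_k-g/3,v_k+g/3]$, the same integration-out trick, applied to $\phi_k:=L(\gamma_t)-v_k$ using $\eps^2\geq|\phi_k|/C_1$ (which follows from \eqref{est:Loj-length} because the only critical value within $g/3$ of $v_k$ is $v_k$), bounds $\int\eps\,ds$ by a constant; on outer bands, $|L(\gamma_t)-v_k|>g/3$ for every $k$ forces $\eps^2\geq g/(3C_1)$, and Cauchy--Schwarz then gives a similar constant bound. Summing over the $O(N)$ bands and absorbing using $\sqrt{\psi(0)}\geq\sqrt{\delta_0}$ whenever $T_0>0$ proves \eqref{est:dist-total} for the original flow. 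The reparametrized norm follows from the direct identity $\|\partial_t\tilde\gamma_t\|_{L^2([0,1])}^2=\eps^2/L(\gamma_t)+(\ddt L)^2/3=\eps^2/L+\eps^4/3$, whose integral is controlled by the same bound up to factors of $\bar L$. Finally, once $\gamma_t$ lies $C^1$-close to $c^*$ (via \eqref{est:Loj-distance}), I write $\gamma_t=c^*+h_t\,\nu_{c^*}$ as a normal graph over $c^*$; the flow reduces to a quasilinear parabolic equation for $h_t$ with free-boundary orthogonality whose linearization at $h\equiv 0$ has a spectral gap by Assumption~\ref{ass:LojassIntro}, and standard parabolic Schauder estimates bootstrap the $L^2$-exponential decay to $C^k$-exponential decay for every $k$, yielding \eqref{eqn: exp conv}.

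\textbf{Main obstacle.} The central subtlety is that the critical arc $c^*(t)$ appearing in the Loj estimate \eqref{est:Loj-length} need not coincide with the eventual limit $c^*$. This is resolved by combining the discreteness of $\Crit$ (Assumption~\ref{ass:LojassIntro}) with the level-set partition of $[0,T_0]$: each piece admits either a direct Loj-based estimate or a uniform lower bound on $\eps$, closing the Lojasiewicz--Gronwall loop and producing the sharp $\sqrt{\psi(0)}$ bound.
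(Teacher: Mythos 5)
The overall architecture of your proof for the total-distance bound matches the paper's: dissipation $\ddt L = -\eps^2$, \L ojasiewicz $\psi \leq C_1\eps^2$ near a critical level, Gronwall for exponential decay, and a band/level-set decomposition of $[0,T_0]$ to handle the approach to the limiting level. (The paper phrases the decomposition slightly differently, cutting $[0,\infty)$ at times where $L(\gamma_t)$ hits either a critical value or a midpoint between consecutive critical values, so it never needs a separate "outer band" with a lower bound on $\eps$ — but both treatments close the loop.)

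There are, however, two genuine gaps.

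\textbf{(1) The claimed identity for the reparametrized flow is false.} You write
$\|\partial_t\tilde\gamma_t\|_{L^2([0,1])}^2=\eps^2/L+(\tfrac{d}{dt} L)^2/3$.
Reparametrizing to constant speed produces a tangential drift whose integrand, after changing variables, is proportional to
$\bigl[\,\partial_t L\cdot\varphi(q,t) + \int_{a_1}^q \langle\partial_t\gamma,\kappa\nu\rangle\,|\gamma'|\,dq'\,\bigr]^2$,
and the second summand $\int_{a_1}^q (\kappa-\bar\kappa)\kappa\,|\gamma'|$ is not a linear function of the arclength fraction $\varphi$ for a general curve, so its $L^2$-integral does not reduce to $(\partial_t L)^2/3$. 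The paper's Lemma~\ref{lem:L2equivalence-general} gives the correct statement (an inequality, not an identity), which involves $\bigl(\int|\kappa||\partial_t\gamma|\,ds\bigr)^2$; this is then controlled through $\|\kappa\|_{L^2}\leq\|\partial_t\gamma\|_{L^2}+|\bar\kappa|L^{1/2}$ and the turning-angle bound. Your sketch drops the cross term and the curvature-weighted contribution, so the resulting bound on $\int\|\partial_t\tilde\gamma_t\|_{L^2}\,dt$ is not actually justified.

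\textbf{(2) The smooth exponential convergence step is a genuinely different route, and it is underjustified.} You propose writing $\gamma_t$ as a normal graph $c^*+h_t\nu_{c^*}$ and invoking a spectral gap for the linearized operator ``by Assumption~\ref{ass:LojassIntro}''. Assumption~\ref{ass:LojassIntro} is a statement about the $2\times 2$ Hessian of a finite-dimensional reduced functional; it does not directly yield a spectral gap for the infinite-dimensional (nonlocal, constrained) Jacobi operator of the free-boundary flow. In fact, the point of the \L ojasiewicz route in this paper is precisely to avoid analyzing that operator's spectrum. The paper instead invokes the uniform higher-derivative bounds for global solutions from \cite{EMB2}/\cite{EMB3}, interpolates (Gagliardo–Nirenberg) between $L^2$ and $H^{2k+2}$, and feeds in the two-sided displacement estimate
$\|\tilde\gamma_{t_1}-\tilde\gamma_{t_2}\|_{L^2}\leq C(L(t_1)-L(t_2))^{1/2}$
to upgrade the exponential decay of $L(t)-L(\infty)$ to $C^k$-exponential decay and, crucially, to obtain \emph{uniqueness} of the limit arc (the Cauchy property in $C^k$). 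Your Step 1 argument for uniqueness is also too quick: several distinct $c^*\in\Crit$ could share the same critical length $L_\infty$, so knowing that $L(c_n^*)\to L_\infty$ does not by itself pin down a single limit; uniqueness only comes once the total displacement bound is available. If you want to keep the normal-graph approach you would still need the higher-order a priori bounds and some uniform ellipticity control on the graph system (including the nonlocal $\bar\kappa$ term), which is essentially the content of the [EMB2] estimates the paper cites — so the detour does not save anything and leaves a gap as written.
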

We always consider $\Si$ with a  positive orientation, so in particular, $\nu_\Si$ is the
inner unit normal to $\Om$ in \eqref{equ:apcsf} above. The results of \cite{EMB2} give sufficient conditions on the initial data to ensure the assumptions of Theorem~\ref{mainthm-flow} hold, which will be essential in its application to the next result.

Our final main result is a sharp quantitative stability estimate for minimizers of the relative isoperimetric problem \eqref{eqn: isop problem def}. In the statement, $\kappamaxSi:=\max_{\Si}|\kappa_\Si|$ and $\areaC \approx .04$ is an explicit universal constant whose precise value is given in \eqref{def: c area bound}. 
\begin{theorem}\label{thm: stability} 
Let $\Omega$ be  convex body with $C^{2}$ boundary $ \Si=\partial \Omega $.
Fix $\eta\in (0, \areaC \kappamaxSi^{-2})$, and assume that either $\Sigma$ is a circle or the pair $(\Sigma,\eta)$ satisfies Assumption~\ref{ass:LojassIntro}.
There is an explicitly computable constant $c=c(\Sigma, \eta)$ such that
	\begin{align}\label{eqn: stability L1 theorem statement}
	P(E; \R^2 \setminus \Omega )  - \IP(\eta) 
    &\geq c \inf_{E_* \in \setmins}  |E\Delta E_*|^2
\intertext{  for any set of finite perimeter $E$ in $\R^2 \setminus \Omega$ with $|E|=\eta$.   If $\partial E\setminus \Omega$ is a rectifiable  curve, then additionally}
\label{eqn: stability thm statement hausdorff}
    P(E; \R^2 \setminus \Omega )^2  - \IP(\eta)^2 
    &\geq c \inf_{E_* \in \setmins}  d_H(\partial E, \partial E_*)^2\,.
	\end{align}
\end{theorem}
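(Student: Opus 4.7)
The plan is to use the free boundary area-preserving curve shortening flow of Theorem~\ref{mainthm-flow} as a deformation tool that transforms any almost-minimizer $E$ into a genuine minimizer $E_*$ at a quantitatively controlled cost. Concretely, I flow the relative boundary $\gamma_0 = \partial E \setminus \Sigma$ under freeAPSCF, which preserves $|E_t| = \eta$ and monotonically decreases $L(\gamma_t) = P(E_t; \R^2 \setminus \Omega)$. By Theorem~\ref{mainthm-flow}, the flow converges exponentially to a critical point $c^* \in \Crit$, and the total distance bound \eqref{est:dist-total} translates the perimeter deficit $P(E) - \IP(\eta)$ into a bound on how far $\gamma_0$ had to travel to reach $c^*$, which in turn controls both $|E\Delta E_{c^*}|$ and $d_H(\partial E,\partial E_{c^*})$.

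The first step is a preliminary reduction. If the deficit $P(E)-\IP(\eta)$ exceeds an explicit threshold $\delta_0=\delta_0(\Sigma,\eta)$, both \eqref{eqn: stability L1 theorem statement} and \eqref{eqn: stability thm statement hausdorff} hold trivially since the area constraint $\eta<\areaC\kappamaxSi^{-2}$ and competitor comparisons confine both $E$ and any $E_*\in\setmins$ to a bounded region of $\R^2$, making $|E\Delta E_*|$ and $d_H(\partial E,\partial E_*)$ a priori bounded by explicit constants. When the deficit is smaller than $\delta_0$, standard regularity for almost-minimizers of perimeter together with a short area-preserving perturbation lets us reduce to the case where $\gamma_0 = \partial E \setminus \Sigma \in \B_\eta$ is a smooth embedded arc intersecting $\Sigma$ orthogonally, with bounded length and turning angle bounded by some fixed $\bar\phi$. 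The area threshold, together with smallness of the deficit, places $\gamma_0$ in the basin of attraction identified in \cite{EMB2}, so that the freeAPSCF starting from $\gamma_0$ exists globally and satisfies all hypotheses of Theorem~\ref{mainthm-flow}.

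For the $L^1$-stability \eqref{eqn: stability L1 theorem statement}, the first part of \eqref{est:dist-total} gives
\begin{equation*}
\int_0^\infty \|\partial_t \gamma_t\|_{L^2(ds_{\gamma_t})}\, dt \leq C\, (L(\gamma_0) - L(c^*))^{1/2}.
\end{equation*}
Since the flow with normal velocity $\partial_t \gamma_t \cdot \nu_{\gamma_t} = \kappa_{\gamma_t} - \bar\kappa_{\gamma_t}$ covers the symmetric difference, the monotonicity $L(\gamma_t) \leq L(\gamma_0)$ and Cauchy--Schwarz give
\begin{equation*}
|E_{\gamma_0}\Delta E_{c^*}| \leq \int_0^\infty\int_{\gamma_t} |\partial_t \gamma_t|\, ds_{\gamma_t}\, dt \leq L(\gamma_0)^{1/2} \int_0^\infty \|\partial_t \gamma_t\|_{L^2(ds_{\gamma_t})}\, dt \leq C\,(P(E)-\IP(\eta))^{1/2},
\end{equation*}
which squares to \eqref{eqn: stability L1 theorem statement} provided $E_{c^*}\in\setmins$. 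This last fact holds because, under Assumption~\ref{ass:LojassIntro}, the critical values $\{L(c):c\in\Crit\}$ are discrete with an isolated minimum level; taking $\delta_0$ smaller than the next gap, the chain $\IP(\eta)\leq L(c^*)\leq L(\gamma_0)\leq \IP(\eta)+\delta_0$ forces $L(c^*)=\IP(\eta)$, so $E_{c^*}$ is a minimizer.

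For the Hausdorff estimate \eqref{eqn: stability thm statement hausdorff}, the second part of \eqref{est:dist-total} gives $\|\tilde\gamma_0 - \tilde c^*\|_{L^2([0,1])}\leq C(L(\gamma_0)-L(c^*))^{1/2}$. To upgrade to the optimal $L^\infty$-rate, combine the dissipation $\int_0^\infty\|\kappa_{\gamma_t}-\bar\kappa_{\gamma_t}\|_{L^2}^2\, dt \leq L(\gamma_0)-L(c^*)$ with the mean value theorem to select $T\in[1,2]$ with $\eps(\gamma_T)^2\leq C(L(\gamma_0)-L(c^*))$ (the angle contributions to $\eps(\gamma_T)$ vanish since the flow preserves orthogonality); the \Loj\ distance estimate \eqref{est:Loj-distance} then yields $\|\tilde\gamma_T-\tilde c^*\|_{C^0}\leq C(L(\gamma_0)-L(c^*))^{1/2}$, and uniform parabolic regularity of the smoothed flow over $[0,T]$ upgrades the $L^2$-displacement bound on $[0,T]$ to an $L^\infty$-bound at the same rate, yielding $\|\tilde\gamma_0-\tilde c^*\|_{C^0([0,1])} \leq C(P(E)-\IP(\eta))^{1/2}$. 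Proximity of the portions of $\partial E$ and $\partial E_*$ lying on $\Sigma$ then follows from proximity of the endpoints, and the multiplicative form $P(E)^2-\IP(\eta)^2=(P(E)-\IP(\eta))(P(E)+\IP(\eta))$ converts this into \eqref{eqn: stability thm statement hausdorff}. The main obstacle throughout is the initial reduction: ensuring that every near-minimizer admits a smooth approximation satisfying all hypotheses of Theorem~\ref{mainthm-flow}, in particular the orthogonality of the intersection with $\Sigma$ and the long-time existence of the flow outside $\Omega$ --- and this is where the area threshold $\eta<\areaC\kappamaxSi^{-2}$ is essential, placing the relevant configurations squarely in the basin of attraction from \cite{EMB2}.
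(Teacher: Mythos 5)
Your overall strategy — flow $\partial E\setminus\Sigma$ by freeAPSCF, use the displacement estimate \eqref{est:dist-total}, and cover the symmetric difference — is exactly the core of the paper's proof of Proposition~\ref{lem:distances}. But there is a genuine gap in the step where you claim to be able to reduce a general competitor $E$ to a smooth embedded arc meeting $\Sigma$ orthogonally.

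You invoke ``standard regularity for almost-minimizers of perimeter together with a short area-preserving perturbation.'' This does not apply: a set $E$ with $P(E;\R^2\setminus\Omega)\le \IP(\eta)+\delta_0$ and $|E|=\eta$ is \emph{not} a $(\Lambda,r_0)$-minimizer or quasi-minimizer in the local sense needed for regularity theory — it is only controlled globally. Such a set can be disconnected (a near-minimizer plus a small faraway ball), can have interior holes, can be nonconvex, and can meet $\Sigma$ at non-orthogonal angles, all while having deficit much smaller than the $|E\Delta E_*|^2$ or $d_H(\partial E,\partial E_*)^2$ it incurs. Moreover, even once you have some smooth approximant, you must \emph{quantitatively} control the $L^1$ and Hausdorff errors of the replacement in terms of $\delta_\eta(E)$ — otherwise the final estimate does not close. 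This is precisely the content of the paper's Proposition~\ref{prop: summary reduction}, which requires a nontrivial by-hand sequence of operations: a quantitative sub-additivity estimate for $\IP$ (Lemma~\ref{lem: quant sublinearity}) to discard all but the largest indecomposable component; a hole-filling step via the Ambrosio--Caselles--Masnou--Morel decomposition; a convex-hull step with normal-ray truncation to force acute contact angles (Lemma~\ref{lem:handRed1}); a mollification step to reach $C^{2,\alpha}$ regularity with orthogonal contact (Lemma~\ref{lem: mollify}); and a final area-correction step. Each of these comes with an explicit bound of the form $d_H^2 + |\cdot\Delta\cdot|\lesssim\delta_\eta(E)$ that feeds into the final constant.

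A secondary issue: your treatment of the ``trivial'' large-deficit case for \eqref{eqn: stability thm statement hausdorff} asserts that $d_H(\partial E,\partial E_*)$ is a priori bounded by an explicit constant. That is false without further argument, since a set of finite perimeter with $|E|=\eta$ can have large diameter. The paper handles this by noting that when $\partial E$ escapes a fixed ball, the connectedness of the rectifiable Jordan curve $\partial E$ forces $P(E;\R^2\setminus\Omega)$ to grow linearly in the diameter, so that $P(E)^2-\IP(\eta)^2$ beats $d_H^2$ directly; only when $E$ stays in a fixed ball is the bound trivial. Your $L^1$ case, by contrast, genuinely is trivial since $|E\Delta E_*|\le 2\eta$.

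Finally, on the Hausdorff upgrade once the reduction is available: your idea of sampling a time $T\in[1,2]$ with small $\eps(\gamma_T)$ and using the \Loj{} distance estimate plus parabolic regularity could plausibly be made to work, but it is heavier than the paper's route. The paper instead uses a reverse-Poincar\'e-type estimate (Lemma~\ref{lem: cacciopoli}) directly at $t=0$: using the area constraint, orthogonal contact, and constant curvature of the limit arc, it bounds $\|\tilde\gamma_0'-\tilde c^{*\prime}\|_{L^2}^2$ by $C(L(\gamma_0)-L(c^*))+C\|\tilde\gamma_0-\tilde c^*\|_{L^2}^2$, and then Sobolev embedding gives the $C^0$ bound. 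This avoids any appeal to parabolic smoothing and keeps the constants explicit.
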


The second statement \eqref{eqn: stability thm statement hausdorff} is a free-boundary counterpart of the classical theorem of Bonnesen \cite{Bonnesen}. Simple examples given by removing or adding a small ball from a minimizer show that \eqref{eqn: stability thm statement hausdorff} is false without the additional assumption on $E$. By \cite{AmbrosioJEMS}, any {simple} set of finite perimeter $E$ in $\R^2$, i.e. one such that $E$ and $\R^2 \setminus E$ are indecomposable,  is bounded by a rectifiable Jordan curve up to modification of $E$ on a Lebesgue-null set. 
The assumption $\eta\in (0, \areaC \kappamaxSi^{-2})$ is an artifact of the proof; as described below, this is used to ensure the global existence of a well-behaved solution to the gradient flow.

Let us sketch how the we use the gradient flow to establish \eqref{eqn: stability L1 theorem statement}. (The proof of \eqref{eqn: stability thm statement hausdorff} is similar.) Let $E$ be a set as in \Cref{thm: stability}, and assume without loss of generality that $P(E; \R^2 \setminus \Omega )$ lies below the energy level of any non-minimizing critical point in \eqref{def:crit-points}. (Assumption 
~\ref{ass:LojassIntro} guarantees that the critical values of $L$ on $\Aarea$ are discrete, while if $\Si$ is a circle, minimizers are the only critical points.)
In a by-hand reduction procedure in \Cref{prop: summary reduction}, we associate to $E$ a set $F$ with $|F\setminus \Omega| = \eta$ whose relative boundary $\partial F \setminus \Omega$ is a convex $C^{2,\alpha}$ curve $\gamma_F$ meeting $\Sigma$ orthogonally such that
$$
    \delta_\eta(F) + |E\Delta F|^2 \leq C\,  \delta_\eta(E)\,.
$$
Here we let $\delta_\eta(E) =P(E; \R^2 \setminus \Omega ) - \IP(\eta)$ denote the isoperimetric deficit appearing on the right-hand side of \eqref{eqn: stability L1 theorem statement}.
It thus suffices to show that there is an isoperimetric set $E_*\in \setmins$ such that
\begin{equation}
    \label{eqn: intro stability for improved set}
L(\gamma_F) - \IP(\eta) =\de_\eta(F) \geq c |F\Delta E_*|^2.
\end{equation}
To this end, we evolve $\g_F$ by the gradient flow above. The convexity of $F$ and assumed bound $\eta < \areaC \kappamaxSi^{-2}$ on the enclosed area allow us to apply results of the first-named author \cite{EMB2}, which guarantee that the flow exists, remains embedded, and satisfies the assumptions of Theorems~\ref{mainthm-flow} for all $t \in [0,\infty)$.
By \Cref{mainthm-flow} and the monotonicity of length under the gradient flow, $\g_t$ converges exponentially to an arc $c^*\in \Crit$ that is the relative boundary $\partial E_* \setminus\Om$ of a minimizer $E_* \in \setmins.$ 
In particular, $L(c^*) = \IP(\eta)$. 
Finally, the fundamental theorem of calculus together with some geometric estimates show that the left-hand side of the displacement estimate \eqref{est:dist-total} bounds $|F\Delta E_*|$ above. Thus \eqref{eqn: intro stability for improved set} follows from \eqref{est:dist-total}.

The past two decades have seen tremendous advances in the theory of quantitative stability for isoperimetric inequalities, see e.g. \cite{FMP, FigalliMP, CicaleseLeonardi1, FuscoJulin, Neumayer16, BBJGaussian}, including for the relative (and capillary) isoperimetric problems on half planes and cones \cite{FigalliIndrei, MaggiMihaila,  Kreutz_Schmidt_2024, PPCap, carazzato2025quantitativeisoperimetricinequalitiescapillarity}. 
The story for the classical planar isoperimetric problem starts nearly a century earlier with Bonneson's theorem \cite{Bonnesen} (and the earlier \cite{Bernstein} for convex curves); see \cite{Osserman} for a survey of these early developments. 

Various proofs of Bonneson's inequality are known, using tools such as Steiner formulas for convex sets \cite{Osserman}, 
Fourier analysis \cite{Hurwitz, Fuglede}, an improved Wirtinger inequality \cite{Benson}, or integral geometry \cite{Santalo}. None of these proofs admit direct generalization to  yield alternative proofs of \Cref{thm: stability}.
An alternative approach to prove \Cref{thm: stability} would be to use a selection principle argument \cite{CicaleseLeonardi1}, where the spectral analysis is carried out in carefully chosen coordinates. While this 
approach would likely allow for the removal of the assumption $\eta < \areaC \kappamaxSi^{-2}$, its use of a compactness argument would prevent one from obtaining explicit constants. In contrast, the constants in all of our main results come from elementary geometric arguments, making them explicitly computable, and our approach highlights the intertwined nature of these three core problems of the quantitative analysis of PDEs.

\noindent{\it Acknowledgments:}
Parts of this work were carried out at BIRS Workshop 23w5062 in Banff and during a scientific visit to the University of Darmstadt; the authors are grateful for this support. The authors warmly thank Theodora Bourni and Raquel Perales for useful conversations. RN is grateful for the support of NSF grants DMS-2340195, DMS-2155054, and DMS-2342349. JP was supported by the National Research Foundation of Korea (NRF) grant funded by the Korea government (MSIT) RS-2024-00346651. EMB was partially funded by the Deutsche Forschungsgemeinschaft (German Research Foundation, DFG) under the grant number MA 7559/1-2 and she appreciates the support.

\section{Preliminaries}\label{sec: preliminaries}

\subsection{The isoperimetric profile}
Throughout the paper, $\Omega\subset \R^2$ will denote a  convex body, i.e. a compact convex set with nonempty interior, whose boundary $\Si = \partial \Omega$ is of class $C^2$. We let $\kappamaxSi >0$ be the maximum curvature of $\Si.$

For a set of finite perimeter $E$ in $\R^2$, we denote by $P(E)$  its perimeter, $P(E;A)$ its relative perimeter in an open set $A$, and $\partial^* E$ its reduced boundary, so that $P(E; A) = \mathcal{H}^1(\partial^* E; A)$; see \cite[Chapter 12]{MaggiBOOK} for basics on sets of finite perimeter. We will tacitly choose a representative of a set of finite perimeter $E$ with $\overline{\partial^*E}= \partial E$; see \cite[Prop. 12.19]{MaggiBOOK}. In two dimensions, sets of finite perimeter have a simple structure thanks to \cite{AmbrosioJEMS}; this structure will be recalled and utilized in Section~\ref{ssec: rect curve}.

The isoperimetric profile is continuous and non-decreasing and satisfies the upper and lower bounds 
\begin{equation}\label{eqn: bounds on profile}
    (2\pi)^\half\, \eta^\half \leq \IP(\eta) \leq 2\pi^\half\,  \eta^{\half}.
\end{equation} 
The left-hand side is precisely the isoperimetric profile $I_{{H}}$ of the half-plane, and the first inequality follows directly by using the convexity of $\Omega$ and the fact that $\eta\mapsto I_{{H}}(\eta)$ is increasing.\footnote{The analogous comparison theorem holds in higher dimensions but is no longer trivial to prove, see \cite{Choeetal}.} The upper bound in \eqref{eqn: bounds on profile} comes from choosing as a competitor in \eqref{eqn: isop problem def} a ball that is disjoint from $\Om$.

We recall that the set of circular arcs that intersect $\Si$ orthogonally and enclose a set of area $\eta$ is denoted by $\Crit$. 
It follows from  convexity of $\Sigma
$ that the center $z_c$ of 
 any $c\in \Crit$ is in $\R^2 \setminus \Omega^\circ$.  So, for $c\in \Crit$ with positive orientation,
 \beq
 \label{est:rad-circ-lower}
\phiturn(c) \in [\pi,2\pi), \qquad \pi r_c^2 \geq \eta \geq \tfrac12\pi r_c^2, \qquad \text{ and }\qquad 
\kappa_c \in [(\pi/2\eta)^{1/2} ,(\pi/\eta)^{1/2}],
\eeq
where $r_c$ and $\kappa_c$ respectively are the radius and curvature of the circle containing $c$, and $\phiturn(c)=\int_c \kappa_c ds_c$ is the turning angle of $c$.  
Given $r>0,$ we let ${\Theta}_\Sigma(r)$ be the maximal turning angle of a circular arc of radius $r$ in $\R^2 \setminus \Omega$ which meets $\Sigma$ orthogonally at the endpoints and note that a trigonometric exercise shows that
\begin{equation}\label{eqn: theta bound turning angle}
\Theta_\Sigma(r) \leq 2\pi - 2 \arctan(1/(r \kappamaxSi))\,.
\end{equation}
We can hence get a slightly improved upper bound  for $\IP(\eta)$ as follows. Let $c^*\in \Crit$ be a
circular arc which bounds a 
minimizer $E_*\in \Min$. Then $E_*$ contains 
 a circular sector of $B_{r_{c^*}}(z_{c^*})$ of angle $\phi=\phiturn(c^*) $.
In particular, $\half r_{c^*}^2\phi \leq \eta,$ i.e. $r_{c^*} \leq (2\eta/\phi)^{1/2}$. Thus  $\IP(\eta) \leq (2\theta \eta)^{1/2},$ and so by \eqref{eqn: theta bound turning angle},
\begin{equation}\label{eqn: improved upper bound}
\IP(\eta) \leq 2 \pi^\half \eta^{\half} \left(1  -\pi^{-1} \arctan \Big( \big( \tfrac{\pi}{2\eta}\big)^\half {\kappamaxSi}^{-1}\Big)\right )^{\half} \,.
\end{equation}

We fix the universal constant 
\begin{equation}\label{def: c area bound}
    \bar{\mathfrak{c}} = \mfrac{{4}}{25 {\pi}} \arcsin^2\left(\mfrac{1}{4\pi} \right) \approx .0415.
\end{equation}
 In \Cref{thm: stability}, we assume $\eta < {\bar{\mathfrak{c}}}\kappamaxSi^{-2}$, which guarantees the following upper bound for the isoperimetric profile.
\begin{lemma}\label{lem: sat flow hp}
    Suppose $\eta < {\bar{\mathfrak{c}}}\kappamaxSi^{-2}$. 
There is an explicitly computable constant $\delta_0 = \delta_0(\eta,\kappamaxSi) >0$ such that  for any $\delta \in [0,\delta_0],$
         \begin{equation}\label{eqn: 2 flow assumption holds}
       \IP(\eta) +\delta < \mfrac{4}{5 \kappamaxSi} \arcsin\left(\mfrac{\eta}{(\IP(\eta) +\delta)^2}\right)\,.
     \end{equation} 
\end{lemma}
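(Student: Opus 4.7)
The plan is to establish the strict inequality at $\delta=0$ with a quantitative gap that comes directly from the choice of $\bar{\mathfrak{c}}$, and then to propagate it to small $\delta>0$ by a continuity (Lipschitz) argument that will make $\delta_0$ explicit.

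First I would verify the case $\delta=0$, i.e.
\[
I_\Omega(\eta)\,\kappamaxSi < \tfrac{4}{5}\arcsin\!\big(\eta/I_\Omega(\eta)^2\big).
\]
The two inputs are the elementary bounds $\sqrt{2\pi\eta}\le I_\Omega(\eta)\le 2\sqrt{\pi\eta}$ from \eqref{eqn: bounds on profile}. The upper bound on $I_\Omega(\eta)$ yields $\eta/I_\Omega(\eta)^2\ge 1/(4\pi)$, hence
\[
\arcsin\!\big(\eta/I_\Omega(\eta)^2\big)\ge \arcsin(1/(4\pi)).
\]
The same upper bound combined with the hypothesis $\eta < \bar{\mathfrak{c}}\kappamaxSi^{-2}=\tfrac{4}{25\pi}\arcsin^2(1/(4\pi))\,\kappamaxSi^{-2}$ gives
\[
I_\Omega(\eta)\,\kappamaxSi \;\le\; 2\sqrt{\pi\eta}\,\kappamaxSi \;<\; \tfrac{4}{5}\arcsin(1/(4\pi)).
\]
Chaining these two inequalities produces the desired strict inequality at $\delta=0$ with explicit gap
\[
G:= \tfrac{4}{5\kappamaxSi}\arcsin(1/(4\pi)) - 2\sqrt{\pi\eta}>0.
\]
The choice of $\bar{\mathfrak{c}}$ in \eqref{def: c area bound} is precisely calibrated so that this gap is positive; this is the only content of the lemma aside from the continuity extension. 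Note that everything is in range: $\tfrac{5}{4}I_\Omega(\eta)\kappamaxSi<\arcsin(1/(4\pi))<\pi/2$, so the monotonicity arguments based on $\arcsin$ and $\sin$ are unambiguous.

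Next I would extend to $\delta>0$. The function
\[
F(\delta):=\tfrac{4}{5\kappamaxSi}\arcsin\!\Big(\tfrac{\eta}{(I_\Omega(\eta)+\delta)^2}\Big)-(I_\Omega(\eta)+\delta)
\]
is strictly decreasing in $\delta$ (the $\arcsin$ term decreases and the linear term increases), and $F(0)\ge G>0$ by the previous step. Differentiating,
\[
-F'(\delta)=1+\frac{8\eta}{5\kappamaxSi (I_\Omega(\eta)+\delta)^3\sqrt{1-\eta^2/(I_\Omega(\eta)+\delta)^4}},
\]
which, using $I_\Omega(\eta)+\delta\ge \sqrt{2\pi\eta}$ and $\eta/(I_\Omega(\eta)+\delta)^2\le 1/(2\pi)$ for all $\delta\ge 0$, is bounded above by an explicit constant $K=K(\eta,\kappamaxSi)$. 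Setting
\[
\delta_0:=G/K
\]
yields $F(\delta)\ge F(0)-K\delta>0$ for all $\delta\in[0,\delta_0]$, which is \eqref{eqn: 2 flow assumption holds}.

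I do not foresee any real obstacle: the lemma is essentially a bookkeeping exercise confirming that $\bar{\mathfrak{c}}$ has been chosen to make the two universal estimates $I_\Omega(\eta)\le 2\sqrt{\pi\eta}$ and $\eta/I_\Omega(\eta)^2\ge 1/(4\pi)$ compatible with the desired inequality. The only minor care required is to make sure $\delta_0$ is written as an explicit function of $\eta$ and $\kappamaxSi$, which follows from the explicit $G$ and the explicit Lipschitz constant $K$ above.
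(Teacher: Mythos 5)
Your proof is correct and takes essentially the same route as the paper: verify the $\delta=0$ case from the upper bound in \eqref{eqn: bounds on profile} and the definition of $\bar{\mathfrak{c}}$, then extend to small $\delta$ by continuity, with the only cosmetic difference being that you obtain the explicit $\delta_0$ from a Lipschitz bound on $F$ whereas the paper suggests using \eqref{eqn: improved upper bound}. One small fix: since you only establish $F(0)\ge G$ (not $F(0)>G$), setting $\delta_0=G/K$ gives $F(\delta_0)\ge 0$ rather than the required strict inequality, so you should take, say, $\delta_0=G/(2K)$.
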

\begin{proof}
The estimate \eqref{eqn: 2 flow assumption holds} with $\delta=0$ follows directly follows from the upper bound in \eqref{eqn: bounds on profile} and the definition of $\bar{\mathfrak{c}}$. Since the inequality \eqref{eqn: 2 flow assumption holds} with $\delta=0$ is strict and both sides are continuous functions of $\delta$, it is clear the estimate holds up to some $\delta_0$. This $\delta_0$ can be made explicit with the claimed dependence by using \eqref{eqn: improved upper bound}.
\end{proof}
\begin{remark}
    {\rm 
    The proof of \Cref{thm: stability} actually goes through for any $\eta $ small enough such that \eqref{eqn: 2 flow assumption holds} holds with $\delta=0.$ 
    }
\end{remark}
\begin{remark}\label{rmk: d sigma bound}
    {\rm 
A basic geometric argument shows that $\kappamaxSi^{-1} \leq d_\Si/2$, where
\begin{align}\label{def: width}
 d_\Sigma := \min\{|x-x'|:x, x'\in\Sigma,\tau_\Sigma(x) = -\tau_\Sigma(x')\}
\end{align}
is the \emph{width} of $\Sigma$, that is, the minimal distance of two parallel lines touching $\Sigma$. Hence the right-hand side of \eqref{eqn: 2 flow assumption holds} is bounded by $\frac25 \arcsin(1/(2\pi)) d_\Si$, and we can in particular use that $\IP(\eta) + \delta \leq d_\Si/2$ whenever 
 \eqref{eqn: 2 flow assumption holds} holds for a given $(\eta, \delta)$.
    }
\end{remark}

 The isoperimetric profile is differentiable outside a countable set and is left- and right-differentiable everywhere. A classical argument shows that the left-and right-derivatives of $\IP$ at $\eta$ are bounded above by the (constant) curvature $\kappa$ of any minimizer of $\IP(\eta)$, and at differentiability points, $\IP'(\eta) = \kappa$. 
 So, $\IP$ is absolutely continuous on compact subsets of $(0,\infty)$ and thus by the fundamental theorem of calculus, for any $0 <\eta_1 < \eta_2,$
 \begin{equation}\label{eqn: ftc profile}
   \Big(\mfrac{\pi}{2\eta_2}\Big)^\half (\eta_2 -\eta_1) \leq   \IP(\eta_2) -\IP(\eta_1 ) \leq \Big(\mfrac{\pi}{\eta_1}\Big)^\half(\eta_2 -\eta_1) .
 \end{equation}
 
\subsection{Notation and basic properties of oriented curves}
For a regular oriented $C^2$ curve $\g$ 
from $x_1(\ga)$ to $x_2(\ga)$
we let $\tau_\g =\frac{\g'}{\abs{\ga'}}$ 
be the unit tangent to the curve, and let $\nu_\ga = J \tau_\ga$, where 
$J$ denotes a counterclockwise rotation by $+\pi/2$. We let $L(\g)$ be the length of $\g$ and $s_\g$ the arclength parameter.
We denote by $ \kappa_\g= \frac{\langle \gamma'',\nu_\gamma \rangle}{|\gamma'|^2}$ the signed curvature with respect to $\nu_\gamma = J\tau_\gamma$. 
We also set  
 $\bar \kappa_\gamma : = \frac{1}{L(\ga)} \int_{\gamma}\kappa_\gamma \, ds_\gamma$ and denote by  
$\phiturn(\gamma):= \int_{\gamma}\kappa_\gamma \, ds_\gamma
$
 the total turning angle of $\gamma$. 
We recall that $\kappa_\ga, \bar\kappa_\gamma, \phiturn(\gamma), \tau_\ga,$ and $\nu_\g$ are of course independent of the choice of parametrization, but that they flip a sign when the orientation is reversed.

In the following we use the convention that $\Si$ is parametrized with positive orientation (i.e. counterclockwise). With this convention $\nu_\Si(p)$ is   the inner unit normal and $\kappa_\Si$ is nonnegative.
Throughout the paper we consider curves $\gamma\in \B$ for 
\begin{equation}
    \label{def:B}
\begin{split}
\B:=&\Big\{\gamma: \gamma \text{ is an oriented $H^2$ curve for which } \g \cap \Omega =\{ x_1(\gamma),  x_2(\gamma)\} \Big\}\,.
\end{split}
\end{equation}

\begin{definition} \label{def: sigma_gamma}
Given a curve $\ga \in \B$, we concatenate $\g$
with the (unique)  oriented sub-arc  $\si_\ga$ of $\Sigma$ for which the oriented immersed closed curve $\g + \sigma_\g$ is contractible in $\R^2 \setminus {\rm int}(\Om)$ and define the relative area $A_\Sigma(\g)$ enclosed by $\gamma$ as 
\begin{equation}
    \label{eqn: def area}
A_\Sigma(\gamma) := A(\gamma + \sigma_\gamma) := -\frac{1}{2}\bigg(\int \gamma \cdot \nu_{\gamma}\, ds_{\gamma} + \int\sigma_\gamma \cdot \nu_{\sigma_\gamma}\, ds_{\sigma_\gamma}\bigg).
\end{equation}
\end{definition}

An explicit way to construct $\sigma_\gamma$ is by taking the projection  $\tilde{\sigma}_\gamma=\pi_\Sigma \circ \gamma$ of $\gamma$ onto $\Sigma$, and defining $\sigma_{\gamma}$ as the (unique) oriented arc of $\Sigma$ that is homotopic to $-\tilde{\sigma}_\gamma$ (with fixed endpoints).
We note that $\sigma_\gamma$ can traverse $\Sigma$ multiple times, may have the same or opposite orientation as the full curve $\Si$, and can even just be a point. 
If $\gamma$ is embedded and oriented so that 
$\g +\si_\g$ has positive orientation, then $A_\Sigma(\gamma)$ coincides with the area of the region bounded by $\gamma$ and $\Sigma$.

\begin{remark}
    \label{rmk:Asi-continuous}
\rm{ As $\si_\gamma$ is determined uniquely by the above topological condition, this construction in particular ensures that $\si_{\gamma_t}$ and $\A_\Si(\gamma_t)$ vary continuously along any continuous family of curves $\gamma_t\in \B$, and that 
 $t\mapsto A_\Si(\gamma_t)$ is differentiable whenever  $t\mapsto \gamma_t\in \B$ is differentiable with 
\begin{equation}\label{eqn: first var of area}
\mfrac{d}{d t} A_{\Sigma}\left(\gamma_{t}\right) = 
-\int_\gamma X_t\cdot \nu_{\gamma_t} ds_{\gamma_t}= -
\int_{\gamma_t} X_t \cdot J \tau_{\gamma_t} \, ds_{\gamma_t}\, \qquad 
\text{ where }X_t:=
\mfrac{d}{d t}\gamma_t \,.
\end{equation}
We note that the variation of $\si_{\gamma_t}$ does not appear in the above formula as $\partial_t \si_{\gamma_t}$ is tangential.
}
\end{remark}

Since 
a change in orientation of $\g$ results in a change of sign of
$A_\Si(\g)$,
we define for each given $\eta>0$ the set of admissible curves for the relative isoperimetric problem as 
\[
\mathcal{B}_\eta := \{ \gamma \in \mathcal B :| A_\Sigma(\gamma)| = \eta\}.
\]
 It is immediate from the definition of $A_\Si$ that
  $L(\g) \geq\IP(\eta)$ whenever $\g\in \B_\eta$ is embedded and a short argument, which we include in Appendix~\ref{app: proof of remark},  ensures that 
  we can always bound 
  \beq
  \label{est:length lower bound}
  L(\gamma) \ge I_\Omega(\eta) \qquad \text{ for every } \gamma \in \mathcal{B}_\eta \text{ and every } \eta>0
  \eeq
  and hence in particular $L(\gamma)\geq (2\pi)^\half \eta^\half$ by \eqref{eqn: bounds on profile}.

We denote by $\al_i(\gamma)\in [0,\pi]$  the angles at which a given curve $\gamma\in \B$  intersect $\Si$, 
characterized  by 
\beq
\label{rel:angle-tangent}
\tau_\Si(x_1)=R_{\al_1}\tau_\gamma(x_1) \text{ and } \tau_\Si(x_2)=R_{-\al_2}\tau_\gamma(x_2)\,.
\eeq
Here and in the following 
$R_\psi$ denotes the rotation by angle $\psi$ in positive, i.e. counter-clockwise direction.

In what follows will often consider curves $\gamma\in \B_\eta$ whose length $L(\gamma)$ and turning angle $\abs{\phiturn(\gamma)}$ are bounded by given numbers $\bar L$ and $\bar \phi$ and whose intersection angles $\al_i$ with $\Si$ defined in \eqref{rel:angle-tangent} satisfy $\abs{\al_i(\gamma)-\pihalf}\leq \bar \beta$ for some $\bar\beta<\pihalf$. 
As $\Si$ is convex, this ensures that $\abs{\phiturn(\gamma)}\geq \pi-2\bar\beta$
and hence, recalling \eqref{eqn: bounds on profile}, that  
\beq
\label{est:prel-kappa-bar}
\frac{\pi-2\bar\beta}{\bar L}\leq \abs{\bar\kappa} \leq \frac{\bar\phi}{ (2\pi)^\half\, \eta^\half }.
\eeq 
In the following we also use that the parametrisation by arclength of any curve $\gamma:[0,L(\gamma)]\to \R^2$ can be expressed as 
\beq\label{eq:arclength-param}
{\gamma}(p) = \gamma(0) +  \int_0^p (\cos {\theta}_\gamma(q), \sin {\theta}_\gamma(q) ) \,dq \qquad \text{ for }\qquad \theta_\gamma(q)=\theta_0+ \int_0^q\kappa_\gamma(s) \,ds_\gamma \,,
\eeq
where $\theta_0(\gamma)$ 
denotes the angle formed by the tangent vector $\tau_\gamma(0)$ at the starting point and the $e_1$ axis.

\section{Proof of Theorem~\ref{thm:loj}}
In this section, we prove Theorem \ref{thm:loj}. Throughout the section, $\Omega$ denotes a convex body with boundary $\Si$ of class $C^2.$

For curves $\gamma$ as in Theorem~\ref{thm:loj} whose deficit $\e(\g)$ defined in  \eqref{def:eps-gamma} is bounded from below by a fixed constant $\eps_0>0$,
the theorem  holds trivially  
by choosing $\gamma^*\in \Crit$ 
as a global minimizer of $L$ in $\B_\eta$. Thus,
 we need only to consider curves $\g \in\B_\eta$ for which $\e(\g)$ lies below an explicit threshold. The proof in this case has three main steps: 
We first show that for 
any $\eta>0$ and $\gamma\in \Aarea$, there exists $c$ in the collection $\Circeta$ of circular arcs belonging to $\mathcal B_\eta$ (recall \eqref{def: circ eta})
whose distance to $\g$ in $C^1$ is controlled by $C\eps_\kappa(\gamma)$, where we set
\beq
\label{def:eps-kappa}
\eps_\kappa(\gamma):=  \|\kappa_{\gamma} - \bar{\kappa}_{\gamma}\|_{L^2(ds_\gamma)},
\eeq
compare Lemma \ref{lemma:reduction} and \Cref{cor:reduction}.
  This allows us to reduce the proof of the claimed distance \Loj\ estimate \eqref{est:Loj-distance} to the analysis of circular arcs, which is carried out in Section \ref{subsec:circular-arcs} and which, unlike the other parts of the proof, exploits the non-degeneracy condition on the curve $\Si$ if $\Si$ is not a circle.
 Combined, this will allow us to prove the claimed bound \eqref{est:Loj-distance} on the distance of $\gamma$ to the nearest critical point $\gamma^*$ with a short argument that is carried out at the beginning of Section \ref{subsec:proof-thm-loj} before we show how this distance \Loj\ estimate yields the claimed estimate \eqref{est:Loj-length} on $\abs{L(\gamma)-L(\gamma^*)}$ .

\subsection{Reduction to circular arcs}\label{ssec: reduction}
We first prove that curves for which $\eps(\gamma)$ is small are $C^1$-close to a circular arc $c_\gamma$ whose enclosed area will be close, but not necessarily equal, to $\eta$. We note that an argument in a similar spirit was used also in \cite{Julin2D} for closed curves. In a second step, we modify this initial circular arc in a way that the area constraint is satisfied. 
 In what follows, we will always take $\e(\g)$ small enough such that, in particular,
\beq\label{ass:angle}
\abs{\al_i(\gamma)-\pihalf}\leq \tfrac{\pi}{12}, \quad i=1,2,
\eeq for the intersection angles $\al_i$ defined in \eqref{rel:angle-tangent}.
For the first step we begin with the following lemma.

\begin{lemma}
   \label{lemma:reduction}
  For any $\eta,\bar{L},\bar{\phi}>0$, 
  there are explicit constants $\e_1 = \e_1(\Si, \eta, \bar{L}, \bar{\phi})>0 $ and $C_2=C_2(\Si,\eta,\bar{L},\bar{\phi})>0$  such that the following holds.
Let $\gamma \in \Aarea$ be any curve with $L(\gamma) \leq \bar{L}$, $\abs{\phiturn(\g)}\leq \bar\phi$, and $\e(\g) \leq \e_1$. 
Then $\gamma$ is embedded. Moreover,
letting $c_\gamma \in\mathcal{B}$ be the circular arc with radius $\bar r:= |\bar{\kappa}_\gamma|^{-1}$ emanating from $\gamma(0)$ which has the same intersection angle $\al_1(c_\gamma)=\al_1(\gamma)$ at this initial point and turns  counterclockwise if $\bar{\kappa}_\ga >0$ and clockwise if $\bar{\kappa}_\ga< 0$, we have
    \begin{equation}
        \label{eqn: reduction est}
    \| \gamma - c_\gamma\|_{C^1(ds_\gamma)} \leq C_2 
    \|\kappa_{\gamma} - \bar{\kappa}_{\gamma}\|_{L^2(ds_\gamma)}.
      \end{equation}      
\end{lemma}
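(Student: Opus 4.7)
The plan is to exploit the arclength representation \eqref{eq:arclength-param} and reduce the estimate to a direct comparison of the turning-angle functions of $\gamma$ and $c_\gamma$. By construction, both curves (parametrized by arclength on $[0,L(\gamma)]$) start at the same point $\gamma(0)$ with the same unit tangent $\tau_\gamma(0)$ (this is the role of matching the intersection angle $\alpha_1$ and the orientation of the turn). Hence they share the same initial angle $\theta_0$, and the only difference in their representations \eqref{eq:arclength-param} comes from using $\kappa_\gamma(s)$ versus the constant curvature $\bar{\kappa}_\gamma$.

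The first step is to bound the turning-angle discrepancy. Writing
\[
\theta_\gamma(q) - \theta_{c_\gamma}(q) = \int_0^q \bigl(\kappa_\gamma(s) - \bar{\kappa}_\gamma\bigr)\, ds_\gamma
\]
and applying Cauchy--Schwarz yields
\[
\sup_{q \in [0,L(\gamma)]} \bigl|\theta_\gamma(q) - \theta_{c_\gamma}(q)\bigr| \leq L(\gamma)^{1/2}\,\|\kappa_\gamma - \bar{\kappa}_\gamma\|_{L^2(ds_\gamma)}.
\]
Because the unit tangents satisfy $\tau_\gamma = (\cos \theta_\gamma, \sin \theta_\gamma)$ (and similarly for $c_\gamma$), the Lipschitz bound $|\tau_\gamma - \tau_{c_\gamma}| \le \sqrt{2}\,|\theta_\gamma - \theta_{c_\gamma}|$ immediately gives the $C^0$-control on tangents. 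A single additional integration in arclength then transfers this to the positions, producing the $C^0$ estimate at the cost of one extra factor of $L(\gamma)$. Combining the two bounds and absorbing $L(\gamma) \le \bar L$ into the constant gives \eqref{eqn: reduction est} with $C_2$ depending only on $(\Sigma, \eta, \bar{L}, \bar{\phi})$ (the latter dependencies enter through \eqref{est:prel-kappa-bar}, which pins $|\bar{\kappa}_\gamma|$ into a compact subset of $(0,\infty)$ once $\epsilon_1$ is small enough to force $|\alpha_i - \pi/2| \le \pi/12$ via \eqref{ass:angle}).

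Embeddedness of $\gamma$ is a separate matter. First observe that, by \eqref{est:prel-kappa-bar} together with \eqref{ass:angle}, the radius $\bar r = |\bar{\kappa}_\gamma|^{-1}$ lies in a compact subinterval of $(0,\infty)$ depending only on the admitted parameters. Choosing $\epsilon_1$ small one may assume that $|\phiturn(\gamma)|$ is separated from $2\pi$ (using that the turning angle equals $\bar{\kappa}_\gamma L(\gamma)$ and that the admissible geometry of $\B_\eta$ together with $|\phiturn(\gamma)| \le \bar\phi$ and the boundedness of $\bar r$ confines it); this makes $c_\gamma$ embedded with a uniform tubular neighborhood of controlled width. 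A further decrease of $\epsilon_1$ ensures via \eqref{eqn: reduction est} that $\gamma$ lies in this tubular neighborhood and is graphical over $c_\gamma$, hence embedded.

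The main obstacle, such as it is, is the bookkeeping: one must choose $\epsilon_1$ small enough so that (i) \eqref{ass:angle} holds and hence \eqref{est:prel-kappa-bar} applies, yielding two-sided control on $\bar\kappa_\gamma$; (ii) the resulting turning angle of $c_\gamma$ stays below $2\pi$ so that $c_\gamma$ is embedded with a nondegenerate tubular neighborhood; and (iii) the $C^1$ closeness produced by the Cauchy--Schwarz step above is smaller than the width of that neighborhood. The analytic core is the one-line Cauchy--Schwarz estimate on $\theta_\gamma - \theta_{c_\gamma}$; everything else is geometric preparation to keep the constants explicit and dependent only on $(\Sigma, \eta, \bar L, \bar\phi)$.
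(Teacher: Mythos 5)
Your Cauchy--Schwarz bound on $\sup_q |\theta_\gamma(q)-\theta_1(q)|$ and the resulting $C^1$-closeness of $\gamma$ to the arclength-parametrized circle are exactly the first step of the paper's proof. But there is a genuine gap after that: you are comparing $\gamma$ to the circle $\gamma_1$ \emph{restricted to $[0,L(\gamma)]$}, whereas the conclusion \eqref{eqn: reduction est} compares $\gamma$ to the constant-speed reparametrization of the actual arc $c_\gamma = \gamma_1|_{[0,L_1]}$, where $L_1 := L(c_\gamma)$ is the arclength at which the circle next meets $\Sigma$. In general $L_1 \neq L(\gamma)$, so ``$c_\gamma$ parametrized by arclength on $[0,L(\gamma)]$'' in your first paragraph does not make sense, and the constant-speed parametrization $\tilde\gamma_1(p) = \gamma_1(L_1 p/L)$ differs from $\gamma_1(p)$. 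To close the argument one must prove $|L(\gamma) - L_1| \lesssim \|\kappa_\gamma - \bar\kappa_\gamma\|_{L^2}$, and this is not a routine consequence of the $C^0$ bound $|\gamma(L) - \gamma_1(L)| \lesssim \eps_\kappa(\gamma)$: the rate at which closeness-to-$\Sigma$ translates into closeness-of-hitting-time depends on the transversality of the intersection, and controlling it uniformly in terms of $\Sigma$'s curvature and the contact angle is precisely the content of the paper's Lemma~\ref{lemma:circ-arcs}, whose two-case application (comparing $\gamma_1$ to either a tangent line to $\Sigma$ or an inscribed circle, depending on whether $L_1 < L$ or $L_1 > L$) occupies roughly half the paper's proof. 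Your proposal does not address this at all.

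A smaller but real issue is your justification that $|\phiturn(\gamma)| < 2\pi$ (needed so that $c_\gamma$ is a proper sub-arc and for the tubular-neighborhood argument). You cite $|\phiturn(\gamma)| \le \bar\phi$, but $\bar\phi$ is an arbitrary given constant that may well exceed $2\pi$, so this gives nothing. The paper instead argues geometrically: the full circle through $\gamma(0)$ with the prescribed contact angle must penetrate $\Omega$ by at least a fixed depth $2d_0 = 2d_0(\Si,\eta,\bar\phi)$, whereas $\gamma_1|_{[0,L]}$ stays within $d_0/4$ of $\gamma \subset \R^2 \setminus \Omega^\circ$, forcing $L < 2\pi\bar r$. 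Some version of this argument (or a substitute) is needed; your appeal to ``the admissible geometry of $\B_\eta$'' and the boundedness of $\bar r$ does not supply it.
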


\begin{remark}\rm{
 As mentioned above, we will take $\e_1$ small enough so that any for any curve $\g$ as in Lemma~\ref{lemma:reduction}, the intersection angles of $\g$ with $\Si$ satisfy \eqref{ass:angle}. Then by \eqref{est:prel-kappa-bar}, $|\bar \kappa_\ga| \geq \frac{5\pi}{6\bar{L}}$ and in particular, $\bar \kappa_\ga\neq0$ so $c_\ga \in \B$ is well defined.
 }
\end{remark}

For the proof of Lemma \ref{lemma:reduction} we will use the following elementary geometric fact.

\begin{lemma}
\label{lemma:circ-arcs}
Let $\hat c$ be a circle with radius $R$  and let 
$c_1$ be an oriented circle 
with radius $r$ through a point $x_1$ in the exterior of $\hat c$.
Suppose that there exists a point $x_0\in \hat c$ so that 
$\abs{x_1-x_0}\leq 
\tfrac{\pi}{24} \min(r,R)$ and  $\abs{\angle(\tau_{{c_1}}(x_1),\nu_{\hat c}(x_0))}\leq \frac{\pi}{12}
$, where $\nu_{\hat c}$ is the inward unit normal of $\hat c$. Then 
$c_1$ intersects $\hat c$ and the length of the circular arc $c$ from $x_1$ to the first point $x_2$ 
where $c_1$ (with the given orientation) intersects $\hat c$  is bounded by 
$L(c)\leq 2 \abs{x_1-x_0}$. 
\end{lemma}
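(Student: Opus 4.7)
The plan is to choose coordinates adapted to $x_0$, parametrize $c_1$ explicitly by arclength from $x_1$, and reduce the whole statement to proving that a single scalar function changes sign on $(0, 2d]$, where $d := |x_1 - x_0|$. After a rigid motion I will place $x_0$ at the origin with $\nu_{\hat c}(x_0) = e_2$, so the center of $\hat c$ is $\hat z = R e_2$, and I write $T := \tau_{c_1}(x_1) = (\sin\beta, \cos\beta)$ with $|\beta| \leq \pi/12$. The full circle $c_1$ then admits the closed-form arclength parametrization
\[
c_1(s) = x_1 + r\sin(s/r)\, T + \epsilon\, r\bigl(1 - \cos(s/r)\bigr)\, JT, \qquad \epsilon \in \{\pm 1\},
\]
so the question of whether $c_1$ meets $\hat c$, and how soon, becomes the question of the first zero of the scalar function $g(s) := |c_1(s) - \hat z|^2 - R^2$.

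Expanding the square yields
\[
g(s) = A + 2 r\sin(s/r)\,\tilde B + 2\epsilon\, r\bigl(1 - \cos(s/r)\bigr)\,\tilde C + 2 r^2\bigl(1-\cos(s/r)\bigr),
\]
with $A := |x_1 - \hat z|^2 - R^2$, $\tilde B := T \cdot (x_1 - \hat z)$, and $\tilde C := JT \cdot (x_1 - \hat z)$. The structural features I will exploit are: (i) $A \geq 0$ since $x_1$ lies in the exterior of $\hat c$, together with the refinement $x_{1,2} \leq d^2/(2R)$ from the same exterior condition and the complementary upper bound $A \leq 2Rd + d^2$; (ii) the angle hypothesis $|\beta| \leq \pi/12$ combined with the distance hypothesis $d \leq \tfrac{\pi}{24} R$ forces $\tilde B \leq -R\cos(\pi/12) + O(d)$ to be strongly negative; and (iii) $|\tilde C| \leq R\sin(\pi/12)+d$ stays moderate. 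My goal is then to show $g(2d) < 0$, so that the intermediate value theorem applied to the continuous $g$ on $[0, 2d]$ produces a smallest $s_* \in (0, 2d]$ with $g(s_*) = 0$. This $s_*$ is the arclength from $x_1$ along $c_1$ to the first point where $c_1$ meets $\hat c$, establishing both the existence of the intersection and the bound $L(c) = s_* \leq 2d$.

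The verification that $g(2d) < 0$ reduces to a Taylor estimate using $d/r \leq \pi/24$: $r\sin(2d/r)$ is within a few percent of $2d$ and $r(1-\cos(2d/r))$ is bounded by $2d^2/r$. The dominant term $2r\sin(2d/r)\tilde B$ is therefore a large negative multiple of $Rd$, while the remaining contributions $A$, the $\tilde C$ term, and $2r^2(1-\cos(2d/r))$ are each at most of order $Rd$ with coefficients controlled by $d/\min(r,R) \leq \pi/24$. The main obstacle is bookkeeping: the constants $\pi/12$ and $\pi/24$ in the hypotheses are essentially tight for the factor $2$ in the conclusion, so sloppy error bounds will fail to close. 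In particular, keeping the refinement $x_{1,2}\leq d^2/(2R)$ from the exterior condition (rather than the cruder $|x_{1,2}|\leq d$) is what pins $\tilde B$ strictly below $-0.9\,R$ and makes the gain from the dominant term beat the accumulated error terms.
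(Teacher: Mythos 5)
Your proposal is correct and takes a genuinely different route from the paper. The paper rescales so that $|x_1-x_0|=1$, places $\hat c=\partial B_R(0)$ with $x_0=(-R,0)$, and then runs a differential argument: it tracks the angle $\theta_{c_1}(p)$ of the tangent with $e_1$ and the angle $\beta(p)$ of $-c_1(p)/|c_1(p)|$ with $e_1$, shows both stay below $\pi/6$ on $[0,\min(p^*,2)]$, concludes $-\frac{d}{dp}|c_1(p)|>\cos(\pi/3)=\tfrac12$, and integrates against the constraint $|c_1(p)|\geq R=|x_0|$ to force $p^*<2$. Your proof instead writes down the exact arclength parametrization of $c_1$, expands the scalar $g(s)=|c_1(s)-\hat z|^2-R^2$ in closed form, and shows $g(2d)<0$ by Taylor bounds at a single time, after which IVT finishes. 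I checked the bookkeeping: with $u=2d/r\leq\pi/12$ one gets $r\sin u\geq 0.988\cdot 2d$, $r(1-\cos u)\leq 2d^2/r$, $\tilde B\leq -0.92R$ (using the refinement $x_{1,2}\leq d^2/(2R)$), and summing the four terms gives $g(2d)\leq -1.51\,Rd+5.66\,d^2\leq -0.77\,Rd<0$, so the argument closes with margin. One small slip: your stated bound $|\tilde C|\leq R\sin(\pi/12)+d$ should be $|\tilde C|\leq R\sin(\pi/12)+d(1+\sin(\pi/12))$, since $|x_{1,2}-R|\leq R+d$; this is negligible for the final estimate. Also, the refinement $x_{1,2}\leq d^2/(2R)$ is helpful but not strictly necessary — the cruder $|x_{1,2}|\leq d$ already gives $\tilde B\leq -0.80R$, which still yields $g(2d)\leq -0.3\,Rd<0$. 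What each approach buys: the paper's differential method is coordinate-light, avoids any Taylor expansion of the trigonometric parametrization, and passes without modification to the straight-line version of $\hat c$ mentioned in the remark after the lemma (a fact the paper uses later in Lemma~\ref{lemma:reduction}); your algebraic method evaluates one explicit quantity at one time and would be easier to push for sharper constants, but the orientation sign $\epsilon$ and the numerics have to be tracked explicitly.
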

We note that the analogous statement also holds if $\hat c$ is replaced by a straight line $\mathcal{T}$ and that the proof of this variation of the lemma can be either obtained in the limit $R\to \infty$ or by a simplified version of the proof below. 

\begin{proof}[Proof of Lemma \ref{lemma:circ-arcs}]
As the claim is invariant under rescaling, translation and rotation we can assume without loss of generality that $\abs{x_1-x_0}=1$, that $\hat c=\partial B_R(0)$ and that $x_0=(-R,0)$ and hence $\nu_{\hat c}(x_0)=e_1$. 
We parametrize $c_1:\R\to \R^2$ by arclength so that $c_1(0)=x_1$ and denote by 
 $\theta_{c_1}(p)$ the angle between the tangent of $\tau_{c_1}$  and $e_1$, compare \eqref{eq:arclength-param}. If $c_1$ intersects $\hat c$ we set  $p^*:=\inf\{p>0: c_1(p)\in \hat c\}$, so that $p^*$ is the parameter of the first intersection point, while in the case where $c_1\cap \hat c=\emptyset$ we simply let $p^*=\infty$. We note that this choice of $p^*$ ensures that  $\abs{c(p)}\geq R$ for all $p\in [0,p^*]$ and that  the claim of the lemma follows provided we show that $p^*<2$.

To prove this we first note that the assumptions of the lemma (and the above normalisation) ensure  that $\abs{\theta_{c_1}(0)}\leq \frac{\pi}{12}$ as well as that  
$r\geq \frac{24}{\pi}$ and hence that $\abs{\kappa_{c_1}}\leq \frac{\pi}{24}$. This ensures that 
$\abs{\theta_{c_1}(p)}\leq 2\abs{\kappa_{c_1}}+\abs{\theta_{c_1}(0)}=2 r^{-1}+\frac{\pi}{12}\leq \frac{\pi}{6}$ for all $p\in [0,2]$. 

To obtain a similar estimate for the angle 
  $\beta(p)$ between $-\frac{c_1(p)}{\abs{c_1(p)}}$ and $e_1$
   we recall that the initial point $x_1$ has distance at least $R$ from the origin and $e_2$ coordinate no more than $\abs{x_1-x_0}\leq 1$, hence allowing us to bound  
   $\abs{\beta(0)}\leq \abs{\tan(\beta(0))}\leq R^{-1}\leq\frac{\pi}{24}$. 
As $\abs{c_1(p)}\geq R$ for all $p\in [0,p^*]$ we can bound 
  the change in $\beta$
 by  
 $\abs{\beta'(p)}=\frac{1}{\abs{c_1(p)}}\abs{\Pi_{c_1(p)}(c_1'(p))}\leq \frac{1}{\abs{c_1(p)}}\leq R^{-1}\leq \frac{\pi}{24}$
 for all such $p$, where we let $\Pi_{c_1(p)}$ denote the projection onto ${T_{c_1(p)}\partial B_{\abs{c_1(p)}}}$. We hence conclude that  $\abs{\beta(p)}\leq \abs{\beta(0)}+\frac{\pi}{24} p\leq  \frac{\pi}{24}+\frac{\pi}{12}<\frac{\pi}{6}$ for all $p\in [0,\min(p^*,2)]$.

Combined this ensures that the angle between $-\frac{c_1(p)}{\abs{c_1(p)}}$ and $\tau_{c_1}(p)$ remains bounded by $\abs{\theta_{c_1}(p)}+\abs{\beta(p)}< \frac{\pi}{6}+\frac{\pi}{6}=\frac\pi3$  on $[0,\min(p^*,2)]$,
which ensures that on this interval
$$-\abs{c_1(p)}'=\langle \tau_{c_1}(p),-\frac{c_1(p)}{\abs{c_1(p)}}\rangle> \cos(\pi/3)=\tfrac12.$$
As $-\abs{c_1(p)}\leq -R=-\abs{x_0}$ on $[0,p^*]$ we can integrate this bound to deduce that 
$$\tfrac{1}{2}\min(p^*,2) < -\int_0^{\min\{p^*, 2\}} |c_{{1}}(p)|'dp \leq  \abs{x_1}-R=\abs{x_1}-\abs{x_0}\le\abs{x_1-x_0}= 1,
$$and hence indeed that $p^*<2$ as required.
\end{proof}

Based on this lemma we can now complete the proof of Lemma~\ref{lemma:reduction}.
\begin{proof} [Proof of Lemma~\ref{lemma:reduction}]
Let $d_0=d_0(\Si,\eta,\bar\phi)>0$ be chosen so that any (full) circle with radius $r\geq (2\pi)^\half \eta^\half \bar\phi^{-1}$ which intersects $\Si$ at an angle $\al_1$ with $\abs{\al_1-\pihalf}\leq \frac{\pi}{12}$ contains a point in $\Om$ whose distance to $\Si$ is greater than ${2}d_0$. Then set
\beq\label{ass:de_0}
\e_1':= 2^{1/2} \bar L^{-3/2} \min\Big(\mfrac{\pi}{24} \kappamaxSi^{-1}, \mfrac{\pi}{24}\mfrac{\eta^\half}{(2\pi)^{1/2}}, {\mfrac{d_0}{4}}, {\mfrac{\bar{L}}{4}}\Big)\,.
\eeq
and $\e_1= \min\{\e_1', \pi/12\}.$
As the claim is invariant under change of orientation, it suffices to consider the case where $\g$ has $\bar{\kappa}_\g> 0$ and hence $\phiturn(\ga)>0$. 
We parametrize $\ga$ by arclength on the interval $[0,L]$, $L:= L(\gamma)$, and recall that this parametrization can be expressed in terms of $\theta_\gamma(p)=\theta_0+\int_0^p \kappa_\gamma(s)ds_\gamma$ as described in \eqref{eq:arclength-param}.
Setting ${\theta}_1(p) = \theta_0 + \bar{\kappa}_\gamma p$ for $p \in [0,\infty)$, we get an analoguous parametrization $\gamma_1:\R\to \R^2$ by arclength of the circle that contains the circular arc $c_\gamma$ of radius $\bar{r}:= \bar{\kappa}_\g^{-1}$.

The fundamental theorem of calculus and the fact that $\theta_\gamma(L)=\theta_1(L)$ ensure that $\abs{\theta_\gamma(p)-\theta_1(p)}
\leq (L/2)^{1/2} \eps_\kappa(\gamma)
$ for all $p\in[0,L]$, which, when inserted into 
 \eqref{eq:arclength-param},
immediately imply that 
\begin{equation}\label{eqn: c1 ests}
|\gamma'(p) - {\gamma}_1'(p)| \leq 2^{-1/2}  L^{1/2}\eps_\kappa(\gamma) \qquad \text{ and }\qquad|\gamma(p) - {\gamma}_1(p)| \leq 2^{-1/2} L^{3/2}\eps_\kappa(\gamma).
\end{equation}
As the angle between two unit vectors $w_1, w_2$ is given by  $\angle(w_1,w_2)=2\arcsin(\frac{1}{2}\abs{w_2-w_1})$ and as \eqref{eqn: c1 ests} ensures that
$\abs{\tau_\gamma(p)-\tau_{\gamma_1}(p)}\leq \frac{1}{4}\leq 2\sin(\pi/10)$ we can in particular bound 
\beq
\label{est:needed-to-get-embedded}
\angle(\tau_\gamma(p),\tau_{\gamma_1}(p))\leq \mfrac{\pi}{5}\quad  \text{ and }\quad  \abs{\gamma(p)-\gamma_1(p)}\leq \min(\mfrac{d_0}4,\mfrac{\pi}{24}
\mfrac{\eta^\half}{(2\pi)^{1/2}}) \quad
\text{ for all } p\in [0,L].
\eeq
We now want to show that this ensures that $\g$ is embedded.  For this we first note that 
$\phiturn(\gamma)=\phiturn(\gamma_1\vert_{[0,L]})<2\pi$, i.e. $L<2\pi\bar r$, 
 since the full circle $\gamma_{1}\vert_{[0,2\pi\bar r]}$ contains a point $\gamma_1(p_0)$ with $\dist(\gamma_1(p_0),\gamma([0,L])\geq \dist(\gamma_1(p_0),\R^2\setminus \Om)\geq 2d_0$.  
Combined with  \eqref{eqn: bounds on profile} this also gives an improved upper bound of $\bar{\kappa}_\g <(2\pi/\eta)^\half$, and hence an improved lower bound of $\bar{r} > (\eta/2\pi)^\half$. The second estimate of 
\eqref{est:needed-to-get-embedded} hence in particular ensures that $\abs{\gamma(p)-\gamma_1(p)}<\half \min(d_0,\bar r)$ for every $p$. It is now useful to observe that whenever $0\leq p_1<p_2\leq L$ are so that 
$\abs{\gamma_1(p_1)-\gamma_1(p_2)}\leq \min(d_0,\bar r)$ we have 
$$\angle(\tau_{\gamma_1}(q_1),\tau_{\gamma_1}(q_2))\leq\angle(\tau_{\gamma_1}(p_1),\tau_{\gamma_1}(p_2))  \quad \text{ for all } q_{1},q_2\in [p_1,p_2] .$$
This implication is immediate if $\phiturn(\gamma_1\vert_{[0,L]}) \leq \pi$. Conversely, if $\phiturn(\gamma_1\vert_{[0,L]}) \geq \pi$, the implication holds since in this case we can bound  $|\g_1(0) -\g_1(L)|\geq 
|\g_1(0) -\g_1(p_0)| \geq |\g(0) -\g_1(p_0)|-\abs{\g(0)-\g_1(0)}>d_0$ which ensures that $\phiturn(\gamma_1\vert_{[p_1,p_2]})<\pi$.  

If there were any  $0 \leq p_1< p_2 \leq L$ for which 
 $\g(p_1)=\g(p_2)$, then this estimate would be applicable since $\abs{\g_1(p_1)-\g_1(p_2)}\leq  \abs{\gamma(p_1)-\gamma_1(p_1)}+\abs{\gamma(p_2)-\gamma_1(p_2)}
 <\min(d_0,\bar r)$ by \eqref{est:needed-to-get-embedded}. At the same time, 
 as $\gamma_1$ parametrises a circle with radius $\bar r$, this estimate would ensure that  $ \abs{\tau_{\gamma_1}(p_1)-\tau_{\gamma_1}(p_2)}=\abs{\nu_{\gamma_1}(p_1)-\nu_{\gamma_1}(p_2)}=\bar r^{-1} \abs{\gamma_1(p_1)-\gamma_1(p_2)}\leq 1$ and hence that 
  $\angle( \tau_{\gamma_1}(p_2),\tau_{\gamma_1}(p_1))<\pi/2$.
  We could hence conclude that 
  $$\angle(\tau_{\gamma}(q_1),\tau_{\gamma}(q_2))<\frac{2\pi}{5}+\frac{\pi}{2}<\pi \text{ for all } q_1,q_2\in [p_1,p_2]$$
  which contradicts the assumption that the curve $\gamma$ intersects itself at $\gamma(p_1)=\gamma(p_2)$. Hence our choice of $\eps_1$ indeed ensures that $\gamma$ is embedded.

 While the parametrization of $c_\gamma$ by arclength is $\gamma_1\vert_{[0,L_1]}$, where $L_1:=L(c_\gamma)$, the  constant-speed parametrization of $c_\gamma$ that is used to compute the $C^1$ distance in the lemma is given by 
 $\tilde \gamma_1(p):=\gamma_{1}(\frac{L_1}{L}p) $, $p\in [0,L]$. 
 To derive the desired estimate \eqref{eqn: reduction est} from  \eqref{eqn: c1 ests} 
 it suffices to 
 to prove that 
\begin{equation}\label{claim:diff-L}
    |L-L_1| \leq 2 \left| \gamma(L) - {\gamma}_1(L)\right|,
\end{equation}
which will allow us to deduce that 
$\abs{L-L_1}\leq 2^{1/2}L^{3/2} \eps_\kappa(\gamma).$

To prove \eqref{claim:diff-L} we
first note that the tangent to $\gamma_1$ at $x_1:= \gamma_1(L)$ agrees with $\gamma'(L)$ since $\theta_\gamma(L) =\theta_1(L)$. 
Setting $x_0:=\gamma(L)$ we can hence use the assumed bound  \eqref{ass:angle} on the intersection angles to conclude that  the angle  $\beta$ between $\gamma_1'(L)$ and the inner normal $\nu_\Si(x_0)$ to $\Si$ is so that  $\abs{\beta}=\abs{\al_2(\gamma)-\pi/2}<\frac\pi{12}$.
We furthermore note that $\bar r\geq \eta^\half/(2\pi)^\half$ using \eqref{est:prel-kappa-bar} and $\phiturn(\gamma)=\phiturn(\gamma_1)<2\pi$. We also recall that we have already shown that  $\abs{x_0-x_1}\leq  2^{-1/2}L^{3/2} \e_1$ in \eqref{eqn: c1 ests}. 
Our choice of $\e_1$ hence ensures that  $\bar r$ and $\hat r_{\Si}:=(\max_\Si \kappa_\Si)^{-1}$ are so that
$\abs{x_0-x_1}\leq \frac{\pi}{24}\min(\bar r, \hat r_\Si).$  
 To prove \eqref{claim:diff-L} 
we can hence apply Lemma 
 \ref{lemma:circ-arcs} for this choice of $x_0=\gamma(L)$ and $x_1:= \gamma_1(L)$ as follows:

If $L_1\leq L$, we choose $\hat c$ as the   tangent $\mathcal{T}^\Si_{x_0}:=\{x_0+q \tau_\Si(x_0), q\in \R\}$ through $x_0$  to $\Si$ and choose $c_1$ as the circle which contains $\gamma_1$ but is parametrised with opposite orientation. Lemma 
 \ref{lemma:circ-arcs} then ensures that $c_1$ intersects $\mathcal{T}^\Si_{x_0}$ and that $p^*:=\inf\{p>0: \gamma_1(L-p)\in \mathcal{T}^\Si_{x_0}\}$ is bounded by 
 $p^*\leq 2\abs{x_0-x_1}=2\abs{\gamma(L)-\gamma_1(L)}$.
We then observe that $\gamma_1\vert_{(L_1,L]}$ cannot intersect $\mathcal{T}^\Si_{x_0}$ as this would force the total turning angle of $\gamma_1$ on $[0,L]$ to be at least $2\pi$, which is excluded by our choice of $\eps_0$.  Thus we must have $L-p^*\leq L_1$, allowing us to deduce the desired bound of $\abs{L_1-L}\leq p^*\leq2\abs{\gamma(L)-\gamma_1(L)} $
in this first case where $L>L_1$.

In the case where 
$L_1>L$ we argue analogously, but now choose $\hat c$ to be the circle of radius $\hat r_\Si =(\max \kappa_\Si)^{-1}$ which touches $\Si$ in $x_0=\gamma(L)$  from the inside, and which is hence fully contained in $\Om$, and let $c_1$ be the circle that contains $\gamma_1$ and that has the same orientation as $\gamma_1$. 
  Lemma \ref{lemma:circ-arcs} then yields that $\gamma_1$ intersects $\hat c$ and that 
$p^*:=\inf\{p>0: \gamma_1(L+p)\in \hat c\}$ is bounded by $p^*\leq 2 \abs{x_0-x_1}$.
As $L_1$ is the first (positive) time at which $\gamma_1$ intersects $\Si$, we know that $\gamma_1\vert_{[L,L_1)}$ is contained in the exterior of $\Om$ and hence disjoint from  $\hat c$. Thus we must have  $L+p^*\geq L_1$ which implies the claimed inequality 
$\abs{L-L_1}\leq p^*\leq 2 \abs{x_0-x_1}$ if this second case where $L_1>L$.  

The reparametrised curve  
 $\tilde \gamma_1(p):=\gamma_{1}(\frac{L_1}{L}p) $ hence  satisfies 
\begin{equation}
    \label{eqn: c c tilde}
|\tilde \gamma_1(p) - {\gamma}_1(p)| \leq |L-L_1|, \qquad |\tilde \gamma_1'(p) -{\gamma}_1'(p)| \leq \left(\tfrac{1}{L} +\bar\kappa \right)|L-L_1| \leq \tfrac{(1+ 2\pi)}{L} |L-L_1| \leq \tfrac{\eta^{1/2}(1+ 2\pi)}{(2\pi)^{1/2}} |L-L_1|,
\end{equation}
where we use in the last step that $\phiturn(\gamma)<2\pi$ and \eqref{eqn: bounds on profile}. The proof now follows by combining \eqref{eqn: c1 ests}, \eqref{eqn: c c tilde}, and \eqref{claim:diff-L}.
\end{proof}

This allows us to prove the following key ingredient for the proof of Theorem \ref{thm:loj}. 

\begin{proposition}
\label{cor:reduction} 
Let $\Si$ and $\gamma$ be as in Lemma \ref{lemma:reduction}. 
Then there exists a circular arc $\tilde c_\gamma\in \Circeta$ so that 
 \begin{equation}
        \label{est:gamma-circle}
    \| \gamma - \tilde c_\gamma\|_{C^1(ds_\gamma)} \leq C_3  \eps_\kappa(\gamma) \qquad \text{ for } \qquad 
    \eps_\kappa(\gamma) :=\|\kappa_{\gamma} - \bar{\kappa}_{\gamma}\|_{L^2(ds_\gamma)}  \end{equation}
         and a constant $C_3=C_3(\Si,\eta,\bar L,\bar \phi)$.  In particular, we have
\beq     \abs{\al_i(\gamma)-\al_i(\tilde c_\gamma)}\leq       C_4\eps_\kappa(\gamma) 
, \qquad i=1,2
\eeq
 for a constant $C_4=C_4(\Si,\eta,\bar L,\bar \phi)$.    
\end{proposition}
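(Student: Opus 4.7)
The circular arc $c_\gamma$ produced by Lemma \ref{lemma:reduction} is $C^1$-close to $\gamma$ but need not enclose area exactly $\eta$. The strategy is to perturb $c_\gamma$ within the three-dimensional family $\Circ$ of circular arcs meeting $\Sigma$ transversally so as to satisfy the constraint $|A_\Sigma|=\eta$, thereby producing $\tilde c_\gamma\in\Circeta$ with displacement from $c_\gamma$ controlled by $\eps_\kappa(\gamma)$.

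First I would show that $c_\gamma$ already has nearly the right area. Using the formula \eqref{eqn: def area} together with the $C^1$-closeness from Lemma \ref{lemma:reduction} and the length comparison \eqref{claim:diff-L} (which controls the distance between the second endpoints $x_2(\gamma)$ and $x_2(c_\gamma)$ along $\Sigma$, and hence the difference between $\sigma_\gamma$ and $\sigma_{c_\gamma}$), one obtains
\[ \bigl||A_\Sigma(c_\gamma)|-\eta\bigr| = \bigl||A_\Sigma(c_\gamma)|-|A_\Sigma(\gamma)|\bigr| \leq C\eps_\kappa(\gamma) \]
with $C=C(\Sigma,\eta,\bar L,\bar\phi)$. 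Next, I would introduce a one-parameter family $\{c_r\}\subset\Circ$ passing through $c_{\bar r}=c_\gamma$, for example by fixing the initial point $\gamma(0)$ and initial tangent $\tau_\gamma(0)$ and varying the radius $r$ around $\bar r=|\bar\kappa_\gamma|^{-1}$, extending $c_r$ until its first intersection with $\Sigma$. The map $r\mapsto A_\Sigma(c_r)$ is smooth, and I would establish the quantitative nondegeneracy
\[ \left|\frac{d}{dr}A_\Sigma(c_r)\Big|_{r=\bar r}\right|\geq c_* =c_*(\Sigma,\eta,\bar L,\bar\phi)>0. \]
A one-dimensional inverse function theorem then produces $r^*$ with $|r^*-\bar r|\leq C\eps_\kappa(\gamma)$ satisfying $|A_\Sigma(c_{r^*})|=\eta$; set $\tilde c_\gamma:=c_{r^*}$. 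Standard dependence of a circular arc on its radius (with fixed initial point and tangent) yields $\|c_{r^*}-c_\gamma\|_{C^1(ds_\gamma)}\leq C|r^*-\bar r|$, and combining with \eqref{eqn: reduction est} gives \eqref{est:gamma-circle}. The intersection-angle bound follows from \eqref{est:gamma-circle}: the endpoints of $\gamma$ and $\tilde c_\gamma$ on $\Sigma$ are $O(\eps_\kappa(\gamma))$-close (so $\tau_\Sigma$ at these endpoints differs by the same order since $\Sigma\in C^2$), while the tangents $\tau_\gamma$ and $\tau_{\tilde c_\gamma}$ at those endpoints are themselves $O(\eps_\kappa(\gamma))$-close by $C^1$-closeness.

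The main obstacle is proving the quantitative lower bound on $\frac{d}{dr}A_\Sigma(c_r)\vert_{r=\bar r}$. Heuristically, an arc that meets $\Sigma$ nearly orthogonally (by \eqref{ass:angle}) and whose turning angle is bounded away from $0$ and $2\pi$ (by the hypothesis on $\phiturn(\gamma)$ and \eqref{est:prel-kappa-bar}) strictly changes its enclosed area as $r$ varies; for an ideal circular sector this reduces to monotonicity of $r^2\phi$ in $r$ with $\phi$ essentially fixed. To make this rigorous I would differentiate \eqref{eqn: def area} along $r\mapsto c_r$ using the arclength parametrization \eqref{eq:arclength-param}, and track the three contributions from (i) the variation of the arc itself, (ii) the motion of the second endpoint $x_2(c_r)$ along $\Sigma$ (nondegeneracy of which is quantified by transversality, i.e.\ \eqref{ass:angle} applied to $c_\gamma$), and (iii) the compensating change of $\sigma_{c_r}$. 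Bounding each piece uniformly in terms of $\eta,\bar L,\bar\phi,\kappamaxSi$ will produce an explicit $c_*$ and hence the claimed constants $C_3,C_4$.
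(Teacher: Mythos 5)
Your overall architecture matches the paper's: approximate by the arc $c_\gamma$ from Lemma~\ref{lemma:reduction}, show the area defect is $O(\eps_\kappa(\gamma))$, and correct by a one-parameter perturbation within $\Circ$. The difference, and the place where your argument is incomplete, is in the choice of the one-parameter family and the resulting nondegeneracy calculation.

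The paper varies the radius $r$ while \emph{fixing the center} $z = z_{c_\gamma}$. With this family, the general area-variation formula \eqref{var:Area-circles} collapses to $\partial_r A_\Sigma(c_r(z)) = L(c_r(z))$, which is immediately bounded below by $\IP(\eta/2) \geq (\pi\eta)^{1/2}$ once the area is close to $\eta$. No further computation is needed; this is the content of \eqref{var:dArea-by-r}. You instead fix the initial point $\gamma(0)$ and initial tangent $\tau_\gamma(0)$ and vary $r$, so $z$ moves along the fixed normal direction $\nu_c(x_1)$. One can still run \eqref{var:Area-circles}: with $\partial_r z = \nu_c(x_1)$ the derivative comes out to
\[
\partial_r A_\Sigma(c_r) \;=\; L(c_r) + \langle \nu_c(x_1), -J(x_2-x_1)\rangle \;=\; r\bigl(\phiturn(c_r) - \sin\phiturn(c_r)\bigr),
\]
which is indeed positive and bounded below once $\phiturn$ is bounded away from $0$ and $2\pi$ (which follows from \eqref{ass:angle} and \eqref{est:prel-kappa-bar}). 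So your plan is sound, but the quantity you identify as ``the main obstacle'' is genuinely the crux and you have not carried it out; it requires either the computation above or something equivalent, and your sketch (tracking contributions from the arc, the endpoint motion, and the compensating $\sigma_{c_r}$) can get there but you have not done so.

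One smaller point: you attribute the quantitative nondegeneracy to ``transversality, i.e.\ \eqref{ass:angle} applied to $c_\gamma$.'' That is not quite where it comes from. The derivative $r(\phiturn - \sin\phiturn)$ (or, in the paper's family, $L(c_r)$) does not see $\al_2$ at all; the $\al_2$-dependent contributions from the motion of $x_2$ and from $\sigma_{c_r}$ cancel against each other in the first-variation formula. Transversality is needed for the family $r \mapsto c_r$ to be well-defined and $C^1$ (so that the one-dimensional inverse function theorem is applicable), but the lower bound itself comes from the bounds on the radius and the turning angle. The paper's choice of family (fixing the center rather than the initial point and tangent) makes this transparent and avoids the computation entirely; if you keep your family, you need to actually exhibit the lower bound.
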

\begin{proof}[Proof of Proposition \ref{cor:reduction}]
It suffices to consider curves for which $\eps_\kappa(\gamma)$ is less than a fixed constant $\eps_2=\eps_2(\Si,\eta,\bar L, \bar\phi)>0$,
as the claims trivially hold for $\e_\kappa(\g) \geq \e_2$ and any $\tilde c_\gamma \in \Circ_\eta$ by choosing the constants depending on $\e_2.$ 
As before, we write for short $L=L(\ga)$  and let $\bar{r} =\bar{\kappa}_\g^{-1}$ be the radius of the circular arc $c_\g$ of \Cref{lemma:reduction}. We also recall that by \Cref{lemma:reduction}, $\g$ is embedded and $\abs{\phiturn(\gamma)}<2\pi$, and that combining this latter fact with \eqref{est:prel-kappa-bar} guarantees that $(\frac{\eta} {2\pi})^{1/2}\leq \bar r\leq \frac{6\bar L}{5\pi}$.
Furthermore, since $\Si$ is embedded, there exists a constant $C=C(\Si)$ so that 
$\angle(\tau_\Si(x),\tau_\Si(\tilde x))\leq C \abs{x-\tilde x} $ for all $x,\tilde x\in \Si$.

Lemma \ref{lemma:reduction} hence in particular ensures that the angle between the tangent to $\Si$ at the endpoints $x_2(\gamma)$ of $\gamma$ and $x_2(c_\gamma)$ is bounded by $C\norm{\gamma-c_\gamma}_{C^0}\leq C\eps_\kappa(\gamma)$. Our construction furthermore ensures that 
 $\tau_\gamma(x_1(\gamma))=\gamma_1'(L)$ and that the angle between this vector and $\tau_{c_\gamma}(x_2(c_\gamma))=\gamma_1'(L_1)$ is given by $\frac{1}{\bar r} \abs{L(\gamma)-L(c_\gamma)}\leq(\frac{\eta} {2\pi})^{-1/2} 2 \left| \gamma(L) - {\gamma}_1(L)\right| \leq C \eps_\kappa(\gamma)$. 
 Combined we hence obtain that 
$\abs{\al_2(\gamma)-\al_2(c_\gamma)}\leq C \norm{\gamma-c_\gamma}_{C^1}\leq C \eps_\kappa(\gamma)$, $C=C(\eta,\Si)$. For $\eps_2$ small enough this in particular ensures that the second intersection angle 
$\al_2(c_\gamma)$ is bounded away uniformly from $0$ and $\pi$, say so that $\al_i(c_\gamma)\in [\pi/4,3\pi/4]$ for $i=2$, while this estimate is trivially true for $i=1$.

Since $\gamma$ is embedded, we can bound
\beq \label{est:diff-area-new}
\abs{A_\Si(c_\gamma)-\eta}=\abs{A_\Si(c_\gamma)-A_\Si(\gamma)}\leq 2\pi (\bar r+ \norm{c_\gamma-\gamma}_{C^0(ds_\gamma)}) \norm{c_\gamma-\gamma}_{C^0(ds_\gamma)}\leq C\eps_\kappa(\gamma), 
\eeq
for an explicitly computable constant $C=C(\Si, \eta, \bar{L},\bar{\phi})$, where in the final inequality we use $\bar{r}\leq \frac{6\bar{L}}{5\pi}$. 
 After reducing $\eps_2$ if necessary, we assume $C\e_2 \leq \eta/4$ for this constant, and thus \eqref{est:diff-area-new} guarantees that 
$\abs{A_\Sigma(c_\gamma) -\eta }\leq \eta/4$.

We now let $z\in\R^2$ be the center of the circle that contains $c_\gamma$ and consider the (continuous) family of circular arcs  $c_r(z)$ in $\mathcal{B}$ parametrized on $[0,L]$ with center $z$ for which $c_{\bar r}(z)=c_\gamma$. The uniform a priori bounds on the angles $\al_i(c_\gamma)$, the radius $\bar r$, and the area of $c_\gamma$ ensure that the map $r\mapsto c_r(z)\in \mathcal{B}$ is a well-defined $C^2$ map into $C^1([0,L], \R^2)$ at least on an interval of the form $(\bar r-c_0,\bar r+c_0)$ for a number $c_0=c_0(\eta, \Si)>0$ (compare Remark \ref{rmk: C2 map} below). 
The first variation of the area along families of circular arcs $c_{r}(z)\in \B$ with fixed center is given by $\partial_r A_\Si(c_r(z))=L(c_r(z))$; see \eqref{var:dArea-by-r} below. 
In particular,  
$\partial_r A_\Si(c_{r}(z))\geq I_\Om(\half\eta)\geq \pi^\half\eta^\half$
for all $r$ for which $A_\Si(c_r(z))\geq \half \eta$.

So, after reducing $\eps_2$ if necessary, we deduce that there is a (unique) $\hat r \in (\bar r-c_0,\bar r+c_0)$ for which  
 $A_\Si(c_r(z))=\eta$  and 
 \begin{equation}
     \label{eqn: r bound}
      \abs{\hat r-\bar r}\leq \frac{\abs{A_\Si(c_\gamma)-\eta}}{\pi^\half\eta^\half} \leq C \eps_\kappa(\gamma)
 \end{equation}
Thus $\tilde{c}_\g = c_{\hat r}(z) \in \Circeta$. It is simple to check that 
$
\norm{\tilde{c}_\g - c_{\gamma}}_{C^1([0,L])}\leq C\abs{r-\hat r}
$ for an explicit $C=C(\bar{L},\eta)$. Combining this with \eqref{eqn: r bound} and the bound \eqref{eqn: reduction est} on $\norm{c_\gamma-\gamma}_{C^1(ds_\gamma)}$ obtained in Lemma \ref{lemma:reduction}, we obtain the first claim \eqref{est:gamma-circle}.
 
The second claim follows from the first. As noted above, $\abs{\angle(\tau_\Si(x),\tau_\Si(\tilde x))}\leq C \abs{x-\tilde x} $ for all $x,\tilde x\in \Si$. So, again using that  $\angle(w_1,w_2)=2\arcsin(\frac{1}{2}\abs{w_2-w_1})$
for unit vectors $w_1,w_2$, we have 
 $\abs{\al_i(\tilde c_{\ga})-\al_i(\gamma)}\leq C \abs{x_i(\tilde c_{\ga})-x_i(\gamma)}+2\arcsin(\frac{1}{2}\abs{\tau_{\tilde c_{\ga}}(x_i(\tilde c_{\ga}))-\tau_\gamma(x_i(\gamma))})\leq C \norm{\tilde c_{\ga}-\gamma}_{C^1(ds_\gamma)}$, which combined with \eqref{est:gamma-circle} completes the proof. 
\end{proof}

\subsection{Analysis of circular arcs}
\label{subsec:circular-arcs}
We let $\Circ$ be the subset of $\B$ that is made up of circular arcs which intersect $\Si$ transversally and let $\Circeta:=\Circ\cap \B_\eta$, compare \eqref{def: circ eta}. Given $c\in\Circ$, we denote by $z_c$ and $r_c$ the center and radius of the circle that contains $c$. 

We first note that for any $c\in \Circ$ there exist neighborhoods $U_{c}$ of $z_c$, $U_{1,2}$ of the endpoints $x_1(c)$, $x_2(c)$ of $c$ and $I$ of $r_c$ so that for every $z\in U_c$ and any $r\in I$ there is a unique circular arc $c_r(z)\in \B$ with radius $r$ and center $z$  whose endpoints $x_i(c_r(z))$ are in $U_i$. 
As $\Si$ is assumed to be $C^2$, a simple argument using the implicit function theorem, which is applicable when the intersection angles of these circular arcs remain bounded away from $0$ and $\pi$, furthermore gives the following.

\begin{remark}\label{rmk: C2 map}
\rm{
For any $r_0>0$ and $\beta_0<\pi/2$ there exist numbers $d_{0,1}=d_{0,1}(r_0,\beta_0,\Si)>0$ so that the following holds true. If the radius and intersection angles of $c\in\Circ$ are so that $r_c\geq r_0$ and $\abs{\al_i(c)-\pi/2}\leq \beta_0$, $i=1,2$, then the above family $(r,z)\mapsto c_{r}(z)$ is well defined on $(r_c-d_0,r_c+d_0)\times B_{d_1}(z_c)$, and the maps $(r,z)\mapsto x_i(c_r(z))$ and  $(r,z)\mapsto \theta_i(c_r(z))$ which assign to each such pair the endpoints and the angles between $x_i-z$ and $e_1$ are given by $C^2$ maps whose norms are bounded by a constant that only depends on $r_0$, $\beta_0$ and the $C^2$ norm of $\Si$.  
}
\end{remark}
    
In the following it suffices to consider circular arcs with positive orientation and it will be convenient to parametrize these arcs $c_r(z)$ with constant speed over $[0,1]$, i.e. as  
\beq
\label{para:circ-arcs}
 p\mapsto z+r(\cos,\sin)(\theta(p)), \quad \theta(p):=\theta_1+p(\theta_2-\theta_1), p\in [0,1],
 \eeq 
which then provides a way of viewing $(r,z)\mapsto c_r(z)$ as a $C^2$ function from 
$(r_c-d_0,r_c+d_0)\times B_{d_1}(z_c)$ to $C^2([0,1],\R^2)$ with uniformly bounded norms. We can furthermore use that the dependence of the intersection angles $\al_i(c_r(x))$ on $r$ and $x$ is controlled in $C^1$ since $\Si$ is $C^2$. 

For families of such circular arcs we now show the following useful lemma.

\begin{lemma}
\label{lemma:variations-circ-arcs}
Let $c_\eps=c_{r_\eps}(z_\eps)$ be a differentiable family of circular arcs in $\Circ$ which have positive orientation. 
Then the variation of the enclosed area is given by 
 \beq\label{var:Area-circles}
\ddeps A_\Si(c_\eps)=\peps r\cdot L +\langle \peps z,- J (x_2-x_1))\rangle.
\eeq
Furthermore, the variation of the endpoints along general variations $c_\eps$ can be expressed as 
\beq
\label{eq:var-xi-with-mu}
\peps x_{1}= \mu_1 \tau_{\Si}(x_1)\quad \text{ and } \quad \peps x_{2}= -\mu_2 \tau_{\Si}(x_2)\quad \text{ for }\quad \mu_i=\frac{-\peps r+\langle \nu_c(x_i),\peps z\rangle}{\sin(\al_i)}.
\eeq
For area preserving variations \eqref{eq:var-xi-with-mu}  this formula reduces to 
\beq\label{eq:Yi}
\mu_i=\frac{1}{\sin(\al_i)} \langle \peps z,Y_i(c)\rangle
\quad\text{ for } \quad Y_i(c):= \nu_{c}(x_i)-L^{-1}J(x_2-x_1), \quad i=1,2
\eeq
where $\nu_c$ is the inward unit normal of $c$, which in turn ensures that the variation of the length along area preserving variations is given by 
\beq
\label{var:lambda}
\peps L(c_\eps)=-\sum_i\cot(\al_i) 
\langle \peps z,Y_i(c)\rangle.
\eeq

\end{lemma}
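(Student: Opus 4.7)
The plan is to derive all four formulas by directly differentiating the constant-speed parametrization \eqref{para:circ-arcs}. Writing $c_\eps(p) = z_\eps + r_\eps(\cos\theta_\eps(p), \sin\theta_\eps(p))$ with $\theta_\eps$ affine in $p$, a positively oriented arc has $\nu_{c_\eps} = -(c_\eps - z_\eps)/r_\eps$, so differentiating in $\eps$ gives the pointwise identity $\peps c_\eps = \peps z - (\peps r)\nu_{c_\eps} + r(\peps\theta)\tau_{c_\eps}$, which in particular yields $\langle \peps c_\eps, \nu_{c_\eps}\rangle = \langle \peps z, \nu_{c_\eps}\rangle - \peps r$. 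All four claims will flow from this identity combined with the elementary relations $\langle \tau_\Si(x_1), \nu_c(x_1)\rangle = \sin\al_1$, $\langle \tau_\Si(x_2), \nu_c(x_2)\rangle = -\sin\al_2$, and $\langle \tau_\Si(x_i), \tau_c(x_i)\rangle = \cos\al_i$, which are read off directly from the definition \eqref{rel:angle-tangent} of the intersection angles.

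For \eqref{var:Area-circles}, I apply the first variation of area \eqref{eqn: first var of area} with $X = \peps c_\eps$ and use the identity above to obtain $\ddeps A_\Si(c_\eps) = \peps r \cdot L - \langle \peps z, \int_c \nu_c\, ds_c\rangle$. The fundamental theorem of calculus combined with $\nu_c = J\tau_c$ gives $\int_c \nu_c\, ds_c = J(x_2 - x_1)$, producing \eqref{var:Area-circles}. For the endpoint variation \eqref{eq:var-xi-with-mu}, the constraint $x_i(c_\eps) \in \Si$ forces $\peps x_i$ to be proportional to $\tau_\Si(x_i)$, so one may write $\peps x_1 = \mu_1 \tau_\Si(x_1)$ and $\peps x_2 = -\mu_2 \tau_\Si(x_2)$ with sign chosen to match the orientation. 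Evaluating the pointwise identity at each endpoint and pairing with $\nu_c(x_i)$ isolates $\mu_i$ in the claimed form.

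For the area-preserving specialization \eqref{eq:Yi}, I solve \eqref{var:Area-circles} for $\peps r$ to get $\peps r = L^{-1}\langle \peps z, J(x_2 - x_1)\rangle$, and substitute into the expression for $\mu_i$, which absorbs both terms into the single vector $Y_i(c) = \nu_c(x_i) - L^{-1} J(x_2-x_1)$. Finally, for \eqref{var:lambda} I invoke the first variation of length \eqref{eq:dL-area-preserv}: on a circular arc the bulk term vanishes since $\kappa_c \equiv \bar\kappa_c$, leaving only the boundary terms $\langle \peps x_2, \tau_c(x_2)\rangle - \langle \peps x_1, \tau_c(x_1)\rangle$, which evaluate to $-\mu_2\cos\al_2 - \mu_1\cos\al_1$ by the tangential inner product; inserting \eqref{eq:Yi} then yields the stated formula.

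The whole argument is direct computation and I do not anticipate any substantive obstacle; the only real subtlety lies in tracking the sign conventions coming from the positive orientation of the arc (which fixes $\nu_c = -(c-z)/r$) and from the asymmetric definition of $\al_1,\al_2$ in \eqref{rel:angle-tangent}, which is precisely what produces the asymmetric sign in the convention $\peps x_2 = -\mu_2\tau_\Si(x_2)$.
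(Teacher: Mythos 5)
Your proposal is correct and follows essentially the same route as the paper's own proof: express the generating vector field via the constant-speed parametrization \eqref{para:circ-arcs}, feed its normal component into \eqref{eqn: first var of area} and integrate to get \eqref{var:Area-circles}, isolate $\mu_i$ by projecting onto the radial direction $\nu_c(x_i)$ (the paper phrases this as differentiating $r^2=|x_i-z|^2$, which is the same computation), substitute the area-preserving constraint, and finally use \eqref{eq:dL-area-preserv} with vanishing bulk term. The only cosmetic difference is the bookkeeping for \eqref{eq:var-xi-with-mu}; the substance is identical.
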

In the above lemma and its proof we use the convention that all geometric quantities, such as the length $L$, the endpoints $x_i$, the intersection angles $\al_i$ are evaluated for the corresponding circular arc $c=c_\eps$. 
\begin{remark}
  \rm{
  From \eqref{var:Area-circles}, we see that 
along families of circular arcs with fixed center, the first variation of the area is given by 
\beq
\label{var:dArea-by-r} 
\partial_r A_{\Si}(c_r(z))=L(c_r(z)),
\eeq
and is in particular bounded below by  $(2\pi \eta)^{1/2}>0$ whenever $c_{r}(z)\in \Circeta$.
  }  
\end{remark}

\begin{proof}[Proof of Lemma \ref{lemma:variations-circ-arcs}]
We parametrize the circular arcs $c_\eps$ as in \eqref{para:circ-arcs} and use that the orientation of $c_\eps$ ensures that $\nu_{c_\eps}$ is the inner normal to $c_\eps$ to write $\nu_\eps(p)=-(\cos,\sin)(\theta_\eps(p))$. As $J\nu_\eps(p)=-\tau_\eps(p)=(\sin,-\cos)(\theta_\eps(p))$, this allows us to express the  generating vector field 
$X=\peps c_{\eps}$ as  
\begin{equation}\label{eqn: vector field}
    X=\peps c_{\eps}=\peps z_\eps-\peps r_\eps \nu_c-r \peps \theta_\eps J\nu_c.
\end{equation}

 As $\abs{c'}=L$ and as $\frac{L}{\theta_2-\theta_1}=r$, the formula 
 \eqref{eqn: first var of area}  for the variation of the area hence yields that 
\beqa\label{eqn: first var area step 1}
\ddeps A_\Si(c_{r_\eps}(z_\eps))=-\int_c X\cdot \nu_c \, ds_c
& = \peps r \cdot L+ L \cdot 
\Big\langle \peps z,\int_0^1(\cos,\sin)(\theta_1+p(\theta_2-\theta_1)) dp\Big\rangle = I + II.
\eeqa
For term $II$, we integrate to find 
\beqa
II &= \tfrac{L}{\theta_2-\theta_1}
\langle \peps z,(\sin,-\cos)(\theta_2)-(\sin,-\cos)(\theta_1)\rangle
=
\langle \peps z,r J\nu_c(x_2)-rJ \nu_c(x_1) \rangle
=
\langle \peps z_\eps, -J  (x_2-z) +J (x_1-z)\rangle\\
&=
\langle \peps z_\eps, -J  (x_2-x_1)\rangle
\eeqa
which together with \eqref{eqn: first var area step 1} establishes the first claim \eqref{var:Area-circles} of the lemma.

Next, we note that since the endpoints $x_{1}, x_2$ are contained in $\Si$, their variation can be written as $\peps x_{i}=\pm \mu_i \tau_{\Si}(x_i)$ for some $\mu_i=\mu_i(\eps)\in \R$, where here and in the following $\pm$ is to be understood as $+$ for $i=1$ and as $-$ for $i=2$. 
To determine $\mu_i$ we can use that 
$$\peps r_\eps=\thalf r^{-1} \peps\abs{x_i-z}^2=\langle r^{-1}(x_i-z), \pm \mu_i\tau_\Si(x_i)-\peps z\rangle=-
\mu_i
\langle \nu_c(x_i), \pm \tau_\Si(x_i)\rangle
+\langle \nu_c(x_i),\peps z\rangle$$
since $\nu_c(x_i)=-\frac{x_i-z}{r}$ is the inner unit normal. We can now use that $\tau_\Si(x_i)=R_{\pm\al_i}\tau_c(x_i)$, compare
\eqref{rel:angle-tangent}, to write
$$\langle \nu_c(x_i), \pm \tau_\Si(x_i)\rangle=\langle J\tau_c(x_i),\pm R_{\pm\al_i}\tau_c(x_i)\rangle=\cos(\pi/2-\al_i)=\sin(\al_i).$$
Thus 
$\peps r_\eps=-
\mu_i\sin(\al_i) 
+\langle \nu_c(x_i),\peps z\rangle.$
This establishes the formula \eqref{eq:var-xi-with-mu} for the variation of the endpoints. 

Now, by \eqref{var:Area-circles}, 
 area preserving variations are characterized by $\peps r=L^{-1} \langle \peps z,J (x_2-x_1)\rangle.$ Making this substitution in \eqref{eq:var-xi-with-mu} directly yields the expression \eqref{eq:Yi} for $\mu_i$ in the case of area preserving variations.
 Inserting the resulting expression for $\peps x_i$ into the formula 
\eqref{eq:dL-area-preserv} for the first variation of the length yields the claimed expression in (\ref{var:lambda}) of 
\beqa
\peps L(c_\eps)&=\langle \peps x_2,\tau_c(x_2)\rangle-\langle \peps x_1,\tau_c(x_1)\rangle=-\sum_i \mu_i \langle \tau_\Si(x_i),\tau_c(x_i)\rangle=-\sum_i\cot(\al_i) 
\langle \peps z,Y_i(c)\rangle
.
\eeqa
\end{proof}

We note that \eqref{var:dArea-by-r} in particular ensures that $\Circeta$ is a $C^2$ manifold that can be parametrized locally using the center. 
To be more precise, 
given a local $C^2$ parametrization $(r,z)\mapsto c_r(z)$ of a neighborhood of a given $c\in \Circeta$ in $\Circ$ as considered above, the implicit function theorem ensures that for $z$ in a sufficiently small neighborhood there exists a unique $r(z)$ so that the corresponding arc $c_{r(z)}(z)$ encloses the required area $\eta$ and that the function $z\mapsto r(z)$ is $C^2$. 

This allows us to view the restriction of the length functional to this $2$-dimensional manifold locally as a $C^2$ function $\LL_\eta(z):=L(c_{r(z)}(z))$ of $z$ whose first variation is described by \eqref{var:lambda}. 
As $\Circeta$ is a submanifold of $\Aarea$, and since the critical points of the length functional (with prescribed enclosed area) are always circular arcs which intersect $\Si$ transversally (and indeed orthogonally), it is trivially true that any critical point $\gamma\in C_\eta^*$ of our original problem is also a critical point of this restricted functional. Conversely, as the vectors $Y_{i}=\nu_{c}(x_i)-L^{-1}J(x_2-x_1)$, $i=1,2$, 
appearing in the formula \eqref{var:lambda} of $d\LLeta(z)(\peps z)$ 
are trivially linearly independent,  we can immediately deduce that $\na \LL_\eta$ vanishes if  and only if 
$\cot(\al_i)=0$ for both $i=1,2$, i.e. if and only if 
both intersection angles are $\al_i=\pihalf$. We also note that $\abs{Y_i(c)}\leq 2$ and hence that $\abs{d\LLeta(\peps z)}\leq 2 (\abs{\cot(\al_1)}+\abs{\cot(\al_2)})\abs{\peps z}$.

We can hence use that $\LL_\eta$ and $L$ are related by the following.

\begin{lemma}
\label{lemma:relation-Leta-L}
For any $\eta>0$
we have $\Crit=\{c\in \Circeta: d \LL_\eta(c)=0\}$ and,  parametrizing $\Circeta$ locally by the center of the circular arcs as described above, we can estimate 
\beq
\label{est:na-L-above}
\abs{\nabla \LL_\eta(z)}
\leq 2(\sin\de)^{-1}
[\abs{\al_1(\gamma)-\pihalf}+\abs{\al_2(\gamma)-\pihalf}]
\eeq
for $\de>0$ chosen so that $\al_i\in [\de,\pi-\de]$, $i=1,2$.  
\end{lemma}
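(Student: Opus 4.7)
The plan is to extract both statements from formula \eqref{var:lambda}, combined with a short linear-independence check for the vectors $Y_1(c), Y_2(c)$; this mainly formalizes the observation already made in the paragraph preceding the lemma. For the characterization $\Crit = \{c \in \Circeta : d\LLeta(c)=0\}$, one inclusion is immediate: $\Circeta$ is a submanifold of $\B_\eta$ and $\LLeta$ is the restriction of $L$ to $\Circeta$, so if $c \in \Crit$ is critical for $L$ on $\B_\eta$, it is in particular critical for $\LLeta$. For the reverse inclusion, I read from \eqref{var:lambda} that $d\LLeta(z) = 0$ amounts to
\[
\cot(\al_1)\langle \partial_\e z, Y_1(c)\rangle + \cot(\al_2)\langle \partial_\e z, Y_2(c)\rangle = 0
\quad\text{for every } \partial_\e z \in \R^2.
\]
Once $Y_1(c), Y_2(c)$ are shown to be linearly independent, this forces $\cot(\al_1) = \cot(\al_2) = 0$, hence both intersection angles equal $\pihalf$ and $c \in \Crit$.

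The main (and only non-routine) step is the linear independence check. Using $\nu_c(x_i) = (z-x_i)/r$, I compute
\[
Y_1(c) - Y_2(c) = (x_2-x_1)/r, \qquad Y_1(c) + Y_2(c) = (2z - x_1 - x_2)/r - 2L^{-1} J(x_2-x_1).
\]
The first is a nonzero vector parallel to the chord, while the second is perpendicular to it since the center $z$ lies on the perpendicular bisector of $[x_1,x_2]$. So $Y_1\pm Y_2$ lie along orthogonal directions, and $Y_1,Y_2$ can be linearly dependent only if $Y_1+Y_2=0$. Parametrizing by the turning angle $\phi\in(0,2\pi)$, so that $|x_2-x_1|=2r\sin(\phi/2)$, $L=r\phi$, and the signed perpendicular distance from $z$ to the chord midpoint is $r\cos(\phi/2)$, the equation $Y_1+Y_2=0$ reduces to $\phi\cos(\phi/2)=2\sin(\phi/2)$, i.e.\ $\tan(\phi/2)=\phi/2$. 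This has no solution in $(0,\pi)$ (since $\tan t > t$ on $(0,\pi/2)$ and $\tan t<0<t$ on $(\pi/2,\pi)$), so $Y_1(c), Y_2(c)$ are linearly independent for every circular arc in $\Circ$.

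For the gradient bound, I read from \eqref{var:lambda} that $\nabla\LLeta(z) = -\cot(\al_1)Y_1(c) - \cot(\al_2)Y_2(c)$ and estimate each factor. The inequality $|Y_i(c)| \le |\nu_c(x_i)| + L^{-1}|x_2-x_1| \le 2$ uses only that the chord length is at most the arc length. For the cotangent, I bound $|\cos\al_i| = |\cos\al_i - \cos(\pihalf)| \le |\al_i - \pihalf|$ (cosine is $1$-Lipschitz) and $\sin\al_i \ge \sin\de$ (from $\al_i\in[\de,\pi-\de]$), which together give $|\cot(\al_i)| \le (\sin\de)^{-1}|\al_i - \pihalf|$. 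Assembling via the triangle inequality yields \eqref{est:na-L-above}. Apart from the linear-independence computation, every step is direct substitution, and I do not expect any serious obstacle.
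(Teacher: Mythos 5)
Your proof is correct and follows the same route as the paper: both claims are read off from \eqref{var:lambda} once the vectors $Y_1(c),Y_2(c)$ are known to be linearly independent, and the cotangent and $|Y_i|\le 2$ bounds combine by the triangle inequality exactly as in the paragraph preceding the lemma. The one place where you go beyond the paper is the linear-independence verification, which the paper simply asserts to be ``trivial.'' Your argument — that $Y_1-Y_2=(x_2-x_1)/r\neq 0$ is parallel to the chord while $Y_1+Y_2$ is perpendicular to it, so dependence forces $Y_1+Y_2=0$, which reduces to $\tan(\phi/2)=\phi/2$ with $\phi\in(0,2\pi)$, which has no solution — is a correct and worthwhile verification: the fact that two vectors of the form $n_i - v$ (with $n_i$ distinct unit vectors, $v$ fixed) are independent is not generically true, so it really does rest on the specific geometry of $\nu_c(x_i)$, $L$, and the chord, and your computation makes that precise.
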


 Given any distinct points $x_1,x_2\in \Si$, we of course know that there  
exists a circular arc $c^*$ that meets $\Si$ orthogonally at $x_1,x_2\in \Si$ if and only if the tangents $\mathcal{T}_{x_1}^\Si$ and 
$\mathcal{T}_{x_2}^\Si$ to $\Sigma$ intersect at a point $p^*$ with $\abs{p^*-x_1}=\abs{p^*-x_2}$. If these tangents are not parallel this is equivalent to the existence of a circle $\Si^*$ which is tangent to $\Si$ at both $x_1$ and $x_2$, and also equivalent to these two tangents having the same intersection angle with the line through $x_1$ and $x_2$.
In particular, given $c^* \in \Circ^*$, we can use that the unit vectors 
\[
v_{\parallel} =v_{\parallel} (c^*)= \frac{x_1(c^*) - x_2(c^*)}{|x_1(c^*) - x_2(c^*) |}, \qquad
v_{\perp} =v_{\perp} (c^*)
=\frac{ \nu_{\Si}(x_1)  + \nu_{\Si}(x_2)}{| \nu_{\Si}(x_1)  + \nu_{\Si}(x_2)|},
\]
are orthogonal. Since  we always parametrize $c^*$ with positive orientation,  the orthonormal basis $\{v_{\perp}, v_{\parallel} \}$ is positively oriented and we can furthermore write 
\beq 
\label{eq:normal-in-new-basis}
v_{c^*}(x_i)=-R_{\pm \beta}v_{\perp}=-(\cos\beta \, v_\perp\pm \sin\beta \,v_{\parallel})
\eeq
where $\beta=\beta(c^*)\in (0,\pi/2]$ is so that the opening angle of $c^*$ is given by $\theta_2(c^*)-\theta_1(c^*)=2\pi-2\beta$, see Figure~\ref{fig: config for hess}. 
Here and in the following we continue to use the convention that  
$\pm$ is to be understood as $+$ for $i=1$ and $-$ for $i=2$.

As $L(c^*)=r_{c^*} (2\pi-2\beta)$ and 
$\abs{x_2-x_1}=2r_{c^*}\sin\beta$, we can write the vectors  $Y_i(c^*)=\nu_{c^*}(x_i)-L^{-1}J(x_2-x_1)$ obtained in Lemma \ref{lemma:variations-circ-arcs} as
\beq
\label{eq:writing-Y-basis}
Y_i(c^*)=-(\cos\beta \,v_\perp\pm \sin\beta \,v_{\parallel})+\mfrac{\sin\beta}{\pi-\beta} Jv_{\parallel}=-\big(\cos\beta +\mfrac{\sin\beta}{\pi-\beta}\big)\,v_\perp \mp \sin\beta \,v_{\parallel}.
\eeq

Furthermore, in this canonical basis, the Hessian of $\LLeta$ at $c^*$ is given by the following explicit geometric expressions.

\begin{lemma}\label{lemma:Hessian}
    Let $c^* \in \Criteta$ be parametrized with positive orientation, let $z^*:=z_{c^*}$ be its center and let
    $ v_{\perp}=v_\perp(c^*) , v_{\parallel}=v_\parallel(c^*) $ and $\beta=\beta(c^*)$ be as above, see also Figure~\ref{fig: config for hess}. 
    Then the Hessian of $\LLeta$ at $z^*$ is given by 
    \beqa 
    \label{eq:Hessian-in-v}
    (d^2 \LL_\eta)(z^*)(v_{\perp}, v_{\perp})&=(\kappa_\Si(x_1)+\kappa_\Si(x_2))\,\big(\cos\beta+\mfrac{\sin\beta}{\pi-\beta}\big)^2
    +2\kappa_{c^*}\sin\beta\,\big(\cos\beta+\mfrac{\sin\beta}{\pi-\beta}\big),\\
    (d^2 \LL_\eta)(z^*)(v_{\perp}, v_{\parallel})&=(\kappa_\Si(x_1)-\kappa_\Si(x_2))\,\sin\beta\,\big(\cos\beta+\mfrac{\sin\beta}{\pi-\beta}\big), \\
     (d^2 \LL_\eta)(z^*)(v_{\parallel}, v_{\parallel})& =(\kappa_{\Si}(x_1)+\kappa_{\Si}(x_2))\,\sin^2\beta-2\kappa_{c^*}\sin\beta\cos\beta.
    \eeqa
    \end{lemma}

 \begin{figure}
        \centering    \includegraphics[width=0.5\textwidth]{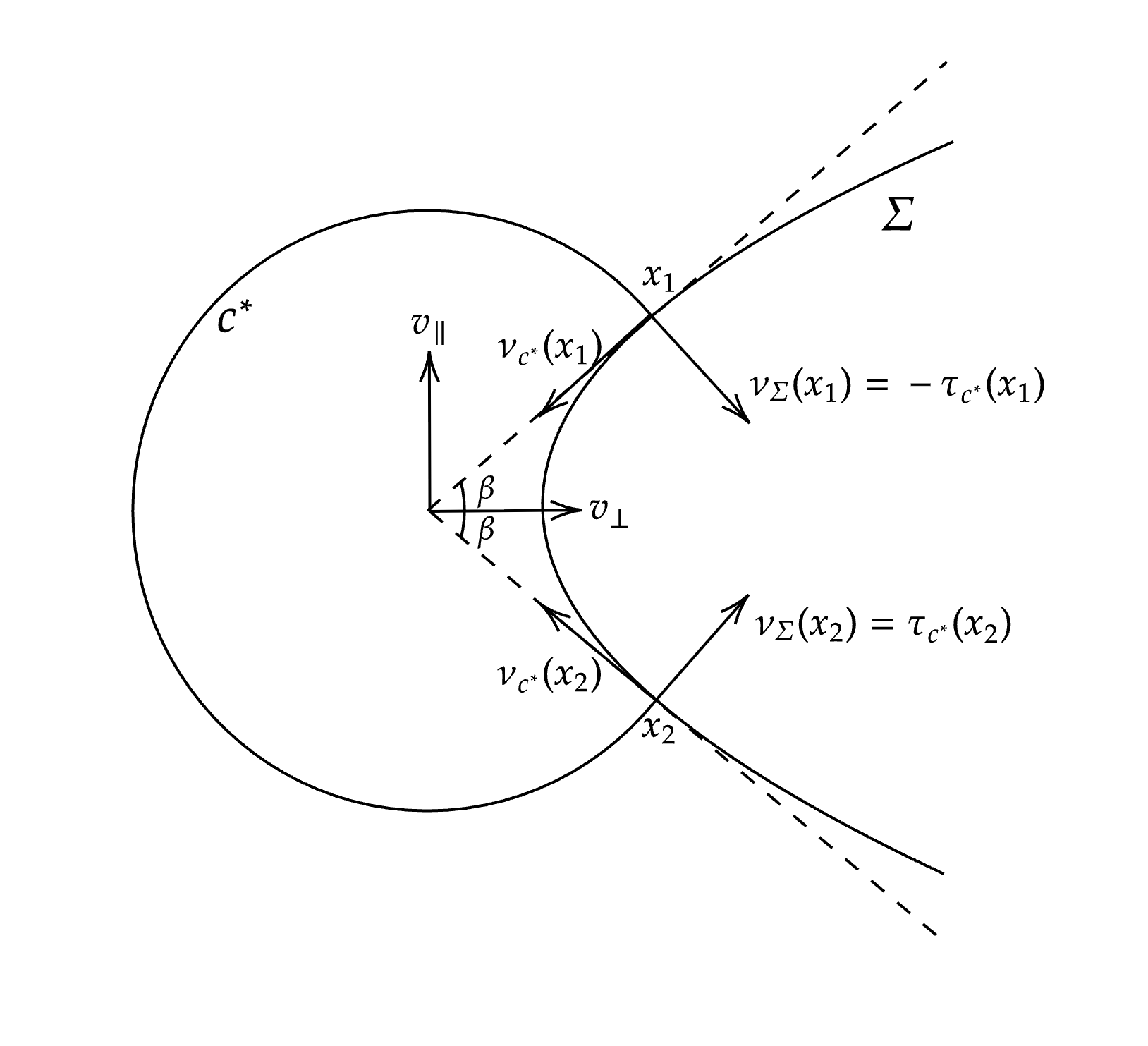}
        \caption{A critical arc $c^*$ with relevant points and directions}
        \label{fig: config for hess}
    \end{figure}

We recall that changes of the support curve $\Si$ that leave $\Si$ invariant up to first order at the endpoints $x_1(c^*), x_2(c^*)$ of a critical arc $c^*$ of $\Si$ will preserve the criticality of $c^*.$ Analogously, Lemma~\ref{lemma:Hessian} shows that perturbations of $\Si$ that preserve the second order behavior of $\Si$ at the endpoints leave the Hessian invariant.

We also note that the above lemma in particular ensures that for every support curve $\Si$ 
\beq 
\label{est:d2L-positive}
(d^2 \LL_\eta)(z_{c^*})(v_{\perp}(c^*), v_{\perp}(c^*))>0 \text{ for every } c^*\in \Circeta^*
\eeq
and hence that $v_\perp$ can never be a Jacobi field.  
In the special case when $\Si$ is a circle, $v_\parallel$ is the generator of the family of critical points we obtain by rotating $c^*$ around the center of $\Si$ and thus trivially a Jacobi field. In this case the set of Jacobi fields is hence given by the span of $v_\parallel$.

In the case where $\kappa_{\Si}(x_1) = \kappa_{\Si}(x_2)$  the Hessian diagonalizes with respect to this basis and   
the non-degeneracy condition has the following simple geometric interpretation.

\begin{remark}
For critical points $c^*$  of $\LLeta$ for which $\kappa_{\Si}(x_1(c^*)) = \kappa_{\Si}(x_2(c^*)) \ge 0$, 
the non-degeneracy condition  holds if and only if 
the osculating circles for $\Si$ at the endpoints $x_1(c^*)$ and $x_2(c^*)$ of $c^*$  do not coincide.\footnote{There are many settings in which this symmetry condition on $\kappa_{\Si}(x_i(c^*))$ is a priori known to hold, for example due to symmetry properties and/or monotonicity properties of the curvature of $\Si$. For instance, if $\Si$ is an ellipse, then critical points are always symmetric across one of the axes, and the osculating circles at $x_1$ and $x_2$ cannot coincide unless $\Sigma$ is a circle.} Here, the osculating circle for $\Sigma$ at a point with $\kappa_\Si=0$ is to be understood as the tangent line to $\Sigma$; in fact, if $\kappa_{\Si}(x_1(c^*)) = \kappa_{\Si}(x_2(c^*))=0$ then the non-degeneracy fails if and only if one arc of $\Si$ from $x_1$ to $x_2$ is a straight line segment.
\end{remark}

To see this, we note that since 
the Hessian is  diagonal in this case and $(d^2 \LL_\eta)(z^*)(v_{\perp}, v_{\perp})>0$, the existence of a non-trivial Jacobi field is  equivalent to $(d^2 \LL_\eta)(z^*)(v_{\parallel},v_{\parallel}) = 0$. When $\kappa_\Sigma(x_1) = 0$, this quantity vanishes if and only if $\beta =\pi/2$ by \eqref{eq:Hessian-in-v}, in which case the tangent lines to $\Sigma$ at $x_1,x_2$ must agree as well as $\tau_\Sigma(x_1)=\tau_\Sigma(x_2)$. Since $\Sigma$ is convex and embedded, this implies that an arc of $\Sigma$ between $x_1$ and $x_2$ is contained in the tangent line at $x_1$. When $\kappa_\Sigma(x_1) > 0$, by \eqref{eq:Hessian-in-v}, $(d^2 \LL_\eta)(z^*)(v_{\parallel},v_{\parallel}) = 0$ if and only if $\cos \beta>0$ and $r^*\tan \beta = {1}/\kappa_{\Si}(x_1)$  where $r^*$ is the radius of $c^*.$ In this case we observe that since $c^*$ is orthogonal to $\Si$ at $x_1, x_2$, a circle through $x_1, x_2$ is tangential to $\Si$ if and only if its center is on the tangent to $c^*$ at $x_i$. Thus let $p$ denote the unique point where the tangent to $c^*$ at $x_1$ intersects $z^*+\text{span}\{v_\perp\}$, so that geometrically, $\rho^* := r^*\tan \beta$ is the distance between $x_1$ and $p$. By symmetry of $x_1,x_2$ across $z^*+\text{span}\{v_\perp\}$, we see that both $x_1$ and $x_2$ lie in on the circle of radius $\rho^*$ centered at $p$. Thus, the  circles of radius $1/\kappa_{\Sigma}(x_1)$ tangent to $\Si$ at $x_1$ and $x_2$ have the same center if and only if $\rho^* = 1/ \kappa_{\Sigma}(x_1)$.

\begin{proof}[Proof of Lemma \ref{lemma:Hessian}]
Let $\{c_{\epsilon,\delta}\}$ be a two-parameter family of circular arcs with $c_{\epsilon,\delta}\in\Circeta$ and $c_{0,0}=c^*$.
As 
\eqref{var:lambda} is applicable for $\eps\mapsto c_{\eps,\de}$ for any $\de$
 we can differentiate this formula with respect to $\de$ to obtain that  
\beq 
\label{eq:sec-var-step1} 
\partial_\epsilon \partial_\delta L(c_{\epsilon,\delta})=-\partial_\de\bigg(\sum_{i} \cot(\alpha_i(c_\de))\cdot\langle \partial_\epsilon z_{\eps,\de},Y_i(c_\de)\rangle\bigg)
=
\sum_{i} \partial_\delta \alpha_i(c_\de)\cdot\langle \partial_\epsilon z,Y_i(c^*)\rangle
\eeq
where we continue to use the convention that derivatives with respect to $\eps$ and $\de$ are evaluated at $\eps=0$ and $\de=0$ and where we use that   $\alpha_i(c^*)=\pi/2$ and hence $\cot(\alpha_i(c^*))=0$ and $\partial_\de \cot(\al_i)=-1$.

To calculate $\partial_\delta \alpha_i$ 
we differentiate the relation $\cos(\alpha_i) = \langle \tau_{\Si}(x_i), \tau_c(x_i)\rangle$ and use that
$\pd x_i=\pm \mu_i \tau_\Si(x_i)$ for $\mu_i=\langle \pd z,Y_i(c^*)\rangle$ 
as well as $\nu_{\Si}(x_i)=\mp  \tau_{c^*}(x_i)$. Since $\partial_s \tau_\Si=\kappa_\Si \nu_\Si $ we hence 
get
\beqa 
\label{eq:angle-derivative}
-\partial_\de \al_i&=-\sin(\alpha_i(c^*))\cdot \partial_\delta\alpha_i = \langle \partial_\delta\tau_{\Si}(x_i), \tau_c(x_i)\rangle+\langle \tau_{\Si}(x_i), \partial_\delta\tau_c(x_i)\rangle\\
&=\pm\mu_i \kappa_\Si(x_i)\langle\nu_{\Si}(x_i), \tau_c(x_i)\rangle+\langle \nu_{\Si}(x_i), \partial_\delta\nu_c(x_i)\rangle\\
&= -\kappa_\Si(x_i) \langle \partial_\delta z,Y_i(c^*)\rangle+\langle \nu_{\Si}(x_i), \partial_\delta\nu_c(x_i)\rangle.
\eeqa
Since $\al_i(c^*)=\pi/2$ and since $\pd x_i$ is tangential to $\Si$ we can now use that the first and third  term of the right hand side of 
$$\partial_\delta\nu_{c}(x_i)=\pd  [r_{c_\de}^{-1}( z_\de-x_i(c_\de))]  
=\pd(r_{c_\de}^{-1}) (z^*-x_i(c^*)) +\kappa_{c^*} \pd z-\kappa_{c^*} \pd x_i
$$
are normal to $\nu_\Si(x_i)$ to conclude that 
\[\partial_\delta \alpha_i = \kappa_\Si(x_i) \langle \partial_\delta z,Y_i(c^*)\rangle-\kappa_{c^*}\langle \nu_{\Si}(x_i), \partial_\delta z\rangle.\]
Hence \eqref{eq:sec-var-step1} allows us to write
\begin{equation} \label{eq: 2nd var LLeta}
d^2\LLeta(z^*)(\peps z,\pd z)=\partial_\epsilon \partial_\delta L(c_{\epsilon,\delta})=\sum_{i} \kappa_\Sigma(x_i)\langle \partial_\delta z,Y_i(c^*)\rangle \langle \partial_\epsilon z, Y_i(c^*)\rangle-\kappa_{c^*}\sum_i \langle \partial_\delta z,\nu_{\Si}(x_i)\rangle\langle \partial_\epsilon z, Y_i(c^*)\rangle.    
\end{equation}
As $Y_i(c^*)$ is described by 
\eqref{eq:writing-Y-basis} in the basis $v_{\perp}, v_\parallel$ and as 
$$\nu_{\Si}(x_i)=R_{\pm \pi/2} \nu_c(x_i)=R_{\mp(\pi/2-\beta)}v_\perp =\sin\beta\, v_\perp \mp \cos\beta \, v_\parallel$$
this immediately yields the claimed formula \eqref{eq:Hessian-in-v} for the Hessian. 
\end{proof}

We can now use these basic properties of $\LLeta$ to show 
\begin{lemma}\label{lemma:2d} 
Let $\eta>0$ and assume that either $\Si$ is a circle or that $(\Si,\eta)$ satisfies the non-degeneracy assumption \eqref{ass:LojassIntro}.
Then 
there exists a constant $C_5=C_5(\eta, \Si)$ so that for every $c\in\Circeta$ there exists 
$c^*\in \mathcal{C}_\eta^*$ with 
\begin{equation}\label{est:2D-dist-loj}
 \|c - c^*\|_{C^1([0,1])} \leq C_5 \eps_\al(c) \quad\text{ where } \quad\eps_\al(c):= \abs{\al_1(c)-\pihalf}+\abs{\al_2(c)-\pihalf}.
\end{equation}
\end{lemma}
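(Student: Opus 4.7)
The plan is to view $\Circeta$ as a $2$-dimensional manifold parametrized locally by the center $z \in \R^2$ of the defining circle, on which the restricted length $\LLeta(z) = L(c_{r(z)}(z))$ is a smooth function whose critical points correspond to $\Crit$ (Lemma \ref{lemma:relation-Leta-L}) and whose gradient is controlled by $\eps_\al(c)$ via \eqref{est:na-L-above}.

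Under Assumption \ref{ass:LojassIntro}, at each $c^* \in \Crit$ with corresponding center $z^*$ the Hessian $d^2 \LLeta(z^*)$ is invertible. Taylor expanding $\nabla \LLeta$ around $z^*$, I would obtain local constants $\rho^*, C^* > 0$ so that $|z - z^*| \leq C^* |\nabla \LLeta(z)|$ for all $z$ with $|z - z^*| < \rho^*$. Combining with the gradient bound \eqref{est:na-L-above} and converting to a $C^1([0,1])$ bound via the uniform $C^2$-regularity of the map $z \mapsto c_{r(z)}(z)$ from Remark \ref{rmk: C2 map} yields $\|c - c^*\|_{C^1([0,1])} \leq C \eps_\al(c)$ for $c$ in a neighborhood of $c^*$. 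In the case where $\Sigma$ is a circle, rotational symmetry forces one eigenvalue of the Hessian to vanish, but the critical set near $z^*$ is the $1$-parameter rotational orbit; I would parametrize a tubular neighborhood of this orbit by coordinates $(s,\theta)$ with $s$ transverse and $\theta$ along the orbit, reducing to a $1$-dimensional non-degenerate problem in $s$, and then use the $\theta$ coordinate to select the critical arc in $\Crit$ nearest to $c$.

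To globalize the estimate, I would first reduce to arcs with $\eps_\al(c) \leq \eps_0$ for some $\eps_0 = \eps_0(\Sigma, \eta) > 0$ to be chosen: if $\eps_\al(c) \geq \eps_0$, then any fixed $c^* \in \Crit$ satisfies the bound once one controls $\|c - c^*\|_{C^1}$ uniformly over the relevant region of $\Circeta$. Then I would argue by contradiction that the local estimate extends to the sublevel $\{\eps_\al \leq \eps_0\}$: otherwise there would exist a sequence $c_n \in \Circeta$ with $\eps_\al(c_n) \to 0$ but $\dist_{C^1}(c_n, \Crit) \geq \delta > 0$. Since the intersection angles $\al_i(c_n) \to \pi/2$ stay bounded away from $0$ and $\pi$, the area constraint $|A_\Sigma(c_n)| = \eta$ together with the convexity of $\Omega$ would force the radii $r_{c_n}$ and centers $z_{c_n}$ into a compact set; extracting a convergent subsequence would produce a limit $c_\infty \in \Circeta$ with $\eps_\al(c_\infty) = 0$, i.e., $c_\infty \in \Crit$, contradicting the separation $\delta$.

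The main obstacle I anticipate is the geometric compactness step in the previous paragraph: one must rule out all degenerate limit behaviors of arcs with $\eps_\al(c_n) \to 0$, namely $r_{c_n} \to 0$, $r_{c_n} \to \infty$, the arc collapsing onto $\Sigma$, or the center $z_{c_n}$ escaping to infinity. Each scenario must be excluded using the fixed area $\eta$, the bounded angle condition, and the convex geometry of $\Sigma = \partial \Omega$, making this step the main technical content of the proof.
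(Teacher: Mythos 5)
Your proposal follows the paper's architecture closely: parametrize $\Circeta$ by the center $z$, relate $\Crit$ to critical points of $\LLeta$, establish a local estimate $|z - \hat z^*| \leq C|\nabla\LLeta(z)|$, convert via the $C^2$-regularity of $z \mapsto c_{r(z)}(z)$, and globalize. Under Assumption~\ref{ass:LojassIntro} your Taylor expansion / invertible-Hessian argument is essentially the paper's (the paper integrates the second derivative along the segment from $z^*$ to $z$, using the modulus of continuity of $D^2\LLeta$ to make the radius $\eps$ and constant explicit).

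However, there is a genuine gap in your treatment of the circle case. You correctly observe that rotational symmetry forces a zero eigenvalue of $d^2\LLeta$ and that the critical set is a rotational orbit, but you then \emph{assume} that the transverse $1$-dimensional problem in $s$ is non-degenerate. This is precisely what needs to be proved, and it is not automatic: Assumption~\ref{ass:LojassIntro} is explicitly \emph{not} available here (it fails for the circle because of the zero eigenvalue), so no hypothesis supplies the transverse non-degeneracy. The paper's proof of the circle case is almost entirely devoted to verifying this. Writing $\LLeta(z) = \LLeta(|z|)$ and $z^* = z_1^* e_1$, it differentiates the formula $\partial_{z_1}\LLeta(z_1 e_1) = -\sum_i \cot(\al_i)\langle Y_i, e_1\rangle$ from \eqref{var:lambda}, exploits $\al_1 = \al_2$ by symmetry, reduces $\partial_{z_1}^2\LLeta(z^*)$ to a positive multiple of $\partial_{z_1}\al_1(z^*)$, and then performs a second explicit first-variation computation (differentiating $\cos\al_1 = \langle\tau_\Si(x_1), -Jr^{-1}(z - x_1)\rangle$ and invoking \eqref{eq:Yi}) to show $\partial_{z_1}\al_1(z^*) = \kappa_\Si[1 + 2\rho_0 L^{-1}]\sin\beta_2^* + \cos\beta_2^* > 0$. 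Without this verification your argument for the circle does not close.

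A secondary remark: your worry that the ``geometric compactness step'' is the main technical content is misplaced. Once $\eps_\al(c)$ is small the angles are near $\pi/2$, which via the bounds in \eqref{est:rad-circ-lower} pins down the radius $r_c$ in a compact interval and hence confines the center $z_c$ (which lies at distance $r_c$ from a point of $\Si$) to a compact set; the paper disposes of this in one sentence by noting that $\{c\in\Circeta : |\al_i(c)-\pi/2|\leq\pi/4\}$ is compact. The real work is the $\partial_{z_1}^2\LLeta(z^*)>0$ computation you omitted.
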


\begin{proof}
We note that the lemma is trivially true (with $c^*$ chosen as a global minimizer of $A_\Si$) for curves with $\max\abs{\al_i(c)-\pi/2}>\pi/4$. Hence we only need to consider circular arcs with $\abs{\al_i(c)-\pi/2}\leq \pi/4$ for $i=1,2$ which we can furthermore assume to be positively oriented. As this subset of $\Circeta$ is compact and as our assumption on $(\Si,\eta)$ ensures that the set of critical points is finite (up to symmetries in the case of the circle), the claim of the lemma hence follows provided we show that 
for any $c^*\in \Crit$ there exist $C>0$ and a neighborhood $\hat U_{c^*}$ of $c^*$ in $\Circeta$ so that \eqref{est:2D-dist-loj} holds true. 
As $\Circeta$ can locally be represented by $z\mapsto c_{r(z)}(z)$ as described above and as $\LLeta$ and $L$ are related as described in Lemma \ref{lemma:relation-Leta-L}, this follows provided we prove that for every positively oriented  
$c^*\in \Crit$ there exist $\eps>0$ and $C>0$ so that 
 for any $ z\in B_{\eps}(z^*)$, where $z^*:= z_{c^*}$ is the centre of $c^*$, there exists $\hat z^*$ with $\na \LLeta(\hat z^*)=0$ so that 
\beq \label{claim:LLeta-loj}
\abs{z-\hat z^*}\leq C\abs{\na \LLeta(z)}.
\eeq

We first prove that this holds for $\hat z^*=z^*$ in the case  where
 $(\Si,\eta)$ satisfies the non-degeneracy assumption \eqref{ass:LojassIntro}, i.e. where the 
 eigenvalues $\lambda_{1,2}$ of the Hessian $d^2\LLeta(z^*)$ are non-zero. In this case we set  $\lambda_0:=\min(\abs{\lambda_1},\abs{\lambda_2})>0$ and use that $z\mapsto \LLeta(z)$ is $C^2$ and that 
 $z\mapsto \al_i(z):=\al_i(c_{r(z)}(z))$ is $C^1$ with $\al_i(z^*)=\pi/2$
 to choose 
$\eps>0$ (depending on the modulus of continuity of $d^2\LLeta$ and thus on $\eta$ and the $C^2$ norm and modulus of continuity of the second derivative of the arc length parametrization of $\Sigma$) so that 
\beq 
\norm{d^2\LLeta(z^*)-d^2\LLeta(z)}\leq \tfrac12 \lambda_0 \quad \text{ and } \quad \abs{\al_i(c_{r(z)}(z))-\tfrac\pi2}\leq \tfrac \pi4 \quad \text{ for all } z\in B_\eps(z^*).
\eeq
We then let $E_{1,2}$ be orthonormal eigenvectors of $d^2\LLeta(z^*)$ to the eigenvalues $\lambda_{1,2}$, and given any unit vector $w\in \R^2$ consider the unit vector  $\tilde w:= \sum_i\text{sign}(\lambda_i)\langle w,E_i\rangle E_i$ which is chosen so that   
$$d^2\LLeta(\tilde z)(w,\tilde w)\geq d^2\LLeta(z^*)(w,\tilde w)-\thalf \lambda_0=\abs{\lambda_1}\langle w,E_1\rangle^2+\abs{\lambda_2}\langle w,E_2\rangle^2-\thalf \lambda_0\geq \thalf \lambda_0$$
for all $\tilde z\in B_\eps(z^*)$.
Given $z\in  B_\eps(z^*)$ we apply this for $\tilde z_t:=z^*+t(z-z^*)$, $t\in [0,1]$ and $w=\frac{z-z^*}{\abs{z-z^*}}=\frac{\partial_t \tilde z_t}{\abs{z-z^*}} $. As  $\nabla\LLeta(z^*)=0$ this yields  
\beqa
\abs{\na \LLeta(z)}&\geq \nabla\LLeta(z)\cdot \tilde w=(\nabla\LLeta(z)-\nabla\LLeta(z^*))\cdot \tilde w=\int_0^1 \ddt \nabla\LLeta(\tilde z_t)\cdot \tilde w dt \\
&=\abs{z-z^*}\int_0^1 d^2\LLeta(\tilde z_t)( w,\tilde w) dt\geq \thalf \lambda_0 \abs{z-z^*}
\eeqa
hence establishing that 
$$\abs{z-z^*}\leq 2\lambda_0^{-1}\abs{\na \LLeta(z)}   \qquad\text{ for all } z\in B_\eps(z^*) .$$

It hence remains to prove the analogous claim in the case where $\Si$ is circle, without loss of generality given by $\Si=\partial B_{\rho_0}(0)$ for some $\rho_0>0$. 
As the symmetries of this setting ensure that $\LLeta(z)=\LLeta(\abs{z})$, we can assume without loss of generality that $z^*=z_1^* e_1$ where $z_1^*<0$ is given by a critical point of $x\mapsto \LLeta(x e_1)$ and it suffices to prove that there exists $\eps>0$ and $C>0$ so that 
\beq
\label{claim:dist-in-circle-case} 
\abs{z_1-z_1^*}\leq C\abs{\partial_{z_1}\LLeta(z_1 e_1)} \qquad \text{ for all } \abs{z_1-z_1^*}<\eps.\eeq

As $v_\perp(z^*)=e_1$ we can use that 
\eqref{est:d2L-positive} implies that  
$\partial^2_{z_1} \LLeta(z^*)=d^2 \LLeta(z^*)(v_{\perp},v_\perp)>0$. We can hence choose $\eps>0$ small enough so that $\partial_{z_1}^2\LLeta(z_1e_1)\geq \half \partial_{z_1}^2\LLeta(z^*) =:C^{-1}>0$ on $(z_1^*-\eps,z_1^*+\eps)$ to deduce that $\abs{\partial_{z_1}\LLeta(z_1e_1)}= \abs{\partial_{z_1}\LLeta(z_1e_1)-\partial_{z_1}\LLeta(z_1^*e_1)}\geq 
C^{-1} \abs{z_1-z_1^*}$. This  immediately implies \eqref{claim:dist-in-circle-case} for this choice of $C$ and hence completes the proof of 
the lemma in this second case where $\Si$ is a circle. 
\end{proof}

\subsection{Proof of Theorem \ref{thm:loj}}
\label{subsec:proof-thm-loj}
In this section we explain how the results from the previous two subsections allow us to complete the proof of the \Loj\ estimates \eqref{est:Loj-distance} and \eqref{est:Loj-length} claimed in Theorem \ref{thm:loj}.
So let $\bar L>0$ and $\bar \phi$ be any given numbers and let $\gamma\in \Aarea$ be so that $L(\gamma)\leq \bar L$ and $\abs{\phiturn(\g)}\leq \bar \phi. $ As discussed at the beginning of the section,
it suffices to consider curves $\gamma\in \B_\eta$ with $\eps(\gamma)\leq \eps_0$ for a fixed constant $\eps_0=\eps_0(\Si,\eta,\bar L,\bar \phi)>0 $, which we can in particular choose so that $\eps_0\leq \eps_1$ for the constant $\eps_1$ obtained in Lemma \ref{lemma:reduction}.

For such curves, Proposition \ref{cor:reduction} applies and ensures that there exists a circular arc $c\in \Circeta$ so that 
\beq \label{est:proof-thm-1-1}
\norm{\gamma-c}_{C^1(ds_\gamma)}\leq C_3 \eps_\kappa(\gamma) \quad \text{ and } \quad \abs{\al_i(c)-\al_i(\gamma)}\leq C_4 \eps_\kappa(\gamma), \quad i=1,2.
\eeq
Lemma \ref{lemma:2d} then yields the existence of $c^*\in \Crit$ for which we can bound  
\beq\label{est:proof-thm-1-2}
\norm{c-c^*}_{C^1(ds_\gamma)}\leq \max(1,L(\gamma)^{-1}) \norm{c-c^*}_{C^1([0,1])}
\leq C_5 \max(1,\IP(\eta)^{-1})\eps_\al(c)\leq C_5 \max(1,\IP(\eta)^{-1})\big[ \eps_\al(\gamma)+2C_4\eps_\kappa(\gamma)\big].\eeq
Combined, \eqref{est:proof-thm-1-1} and \eqref{est:proof-thm-1-2} provide the claimed distance \L ojasiewicz estimate \eqref{est:Loj-distance}.

It remains to show that this estimate on the $C^1$ distance of $\gamma$ to $c^*$ ensures that the difference of their lengths is controlled by \eqref{est:Loj-length}.
To this end we first note that 
the length
of the segments $\si_i$ of $\Si$ between $x_i(c^*)$ and $x_i(\gamma)$ is bounded by  
\beq
\label{est:l-si-i}L(\si_i)\leq C\norm{\gamma-c^*}_{C^0}\leq 
C\eps(\gamma)\leq \min\Big( \mfrac{\pi}{4\kappamaxSi},\big( \mfrac{\eta}{2\pi}\big)^\half\Big),\eeq where the last estimate holds after reducing $\eps_0$ if necessary.

We now note that since $c^*$ meets $\Si$ orthogonally, the turning angles of $c^*$ (when parametrized with positive orientation) and of the subarc $\si_{c^*}$ by which we close up $c^*$ are related by $\phiturn(c^*)=\pi+\phiturn(-\si_{c^*})$. This immediately implies that $\phiturn(-\si_{c^*})\in [0,\pi)$ and hence that $L(\Si\setminus \si_{c^*})\geq 
 \frac{\pi}{\kappamaxSi} $. 
If $\phiturn(-\si_{c^*})\in [\pi/2,\pi]$ we can furthermore bound $L(\si_{c^*})\geq \frac{\pi}{2\kappamaxSi}$ while otherwise we can use that $\phiturn(c^*)\in [\pi,\frac{3\pi}2]$ and hence 
 $L(\si_{c^*})\geq \abs{x_1(c^*)-x_2(c^*)}=(2-2\cos(\phiturn(c^*))^{1/2} r_{c^*}
\geq 2^\half  r_{c^*}\geq 2^\half (\eta \pi^{-1})^\half$. 
 The above estimate \eqref{est:l-si-i} hence in particular implies that  $\min(L(\si_{c^*}), L(\Si\setminus\si_{c^*}))\ge L(\si_1)+L(\si_2)$ which ensures that 
 $\si_1$ and $\si_2$ are disjoint.

We now consider the modified support curve  $\hat\Si$ which we obtain from $\Si$ by replacing these short segments $\sigma_i$ of $\Si$  with the 
line segments $\hat \si_i$ between 
 $x_i(\gamma)$ and $x_i(c^*)$ and denote by $\hat \B$ and $\Areahat:\hat \B\to \R$ the set of admissible curves and enclosed area for this modified support curve. 
 
The convexity of $\Sigma$ ensures that the sets $E_i$, $i=1,2$, which are enclosed by
$\si_i$ and $\hat \si_i$ are contained in the triangle formed by the line segment $\hat \si_i$, whose length is $\abs{x_1(c^*)-x_1(\gamma)}\leq C\eps(\gamma)$, and the two tangents $T_{x_i(c^*)}\Si$ and $T_{x_i(\gamma)}\Si$, whose intersection angles with $\hat \si_i$ can be no larger than $L(\si_i) \kappamaxSi\leq C\eps(\gamma)\leq \pi/4$. This implies that
\beq\label{est:hat-Area-1}
\abs{\hat E_i}
\leq \half \abs{x_i(c^*)-x_i(\gamma)}^2 \tan(\kappamaxSi L(\si_i)) \leq C \eps(\gamma)^3, \eeq
from which we deduce that  $\abs{\AreaSi(\tilde \gamma)-\Areahat(\tilde \gamma)}\leq C\eps(\gamma)^3 $
for all $\tilde \gamma\in \hat\B\cap \B $.
Applied for $c^*$ and $\gamma$ and combined with the fact that $\AreaSi(\gamma)=\eta=\AreaSi(c^*)$ this in particular ensures that 
\beq\label{est:Area-hat-Area}
\abs{\Areahat(c^*)-\Areahat(\gamma)}\leq C\eps(\gamma)^3.\eeq
We now want to argue that this implies that the first variation of both the modified area functional and of the length functional at $c^*$ in direction of the vector field $X(p)=(\gamma(p)-c^*(p))$,  are of order $O(\eps(\gamma))$. To this end, we note that since the  modified support curve is flat between the respective endpoints of $c^*$ and $\gamma$,  the curves 
 $\gamma_t\in C^1([0,L(\gamma)],\R^2)$, $t\in [0,1]$, 
 obtained by interpolating linearly  
$$\gamma_t(p)=c^*(p)+t(\gamma(p)-c^*(p)), \quad p\in [0,L], \quad L:=L(\gamma)$$
between the constant speed parameterizations of $c^*$ and $\gamma$,
are all in $\hat \B$ and so that $\partial_t \gamma_t=X$ for every $t$.
While  these parameterizations of $\gamma_t$, $t\in (0,1)$ will in general not be by constant speed, we can use that
$\norm{\abs{\gamma'_t}-1}_{C^0}\leq \norm{\gamma_t-\gamma}_{C^1([0,L])}\leq \norm{c^*-\gamma}_{C^1([0,L])}\leq 
C\eps(\gamma)\leq \half$, where the last step holds after reducing $\eps_0$ if necessary, and hence that  $\norm{\abs{\gamma_t'}^{-1}-1}_{C^0}
\leq 2 \norm{c^*-\gamma}_{C^1([0,L])}\leq C\eps(\gamma)$.

As the first variation of the length along these curves can be computed as  $dL({\g_t})(X) 
=\int_{0}^{L}|\gamma_t^{\prime}(p)|^{-1}
\gamma_t^{\prime}(p) \cdot X^{\prime}(p) d p $ 
   while $d A_{\hat \Si}(\gamma_t)(X)= -\int_{0}^{L} X(p) \cdot J\gamma_t'(p) dp,$ we can bound  
\beq
\label{est:firstvar-hat-nearly-same}
\abs{dL(\gamma_t)(X)-dL(c^*)(X)}+
\abs{d\Areahat(\gamma_t)(X)-d\Areahat(c^*)(X)}
\leq C\norm{\gamma_t-c^*}_{C^1}\cdot \norm{X}_{C^1}\leq  C\eps(\gamma)^2.\eeq
Writing $\Areahat(\gamma)-\Areahat(c^*)=\int_0^1 \ddt \Areahat(\gamma_t) dt =\int_0^1 d \Areahat(\gamma_t)(X)dt$ and using  \eqref{est:Area-hat-Area} and \eqref{est:firstvar-hat-nearly-same}
we can thus conclude that 
\beqa
\label{est:dhatArea}
\Big|\int_{c^*} X\cdot \nu_{c^*} ds_{c^*}\Big| &=\abs{d A_{\hat \Si}(c^*)(X)}
\leq \abs{A_{\hat \Si}(\gamma)-A_{\hat \Si}(c^*)}+ 
C\eps(\gamma)^2
\leq C\eps(\gamma)^3+C\eps(\gamma)^2\leq C \eps(\gamma)^2.
\eeqa
Since the curvature $\kappa_{c^*}$ of the circular arc $c^*$ is constant, the  
first variation of the length along the (in general not area preserving) vector field $X$ can be written as 
$$dL(c^*)(X)=-\kappa_{c^*} \int  X\cdot \nu_{c^*} ds_{c^*}+\langle X(L),\tau_{c^*}(L)\rangle -\langle X(0),\tau_{c^*}(0)\rangle. $$
Since $X$ is parallel to the line segment $\hat \si_i$ at the endpoints and since the 
intersection angles $\hat \al_i(c^*)$ between $c^*$ and $\hat \si_i$ are so that $\abs{\hat \al_i(c^*)-\pi/2}=
\abs{\hat \al_i(c^*)-\al_i(c^*)}=\angle(\hat \si_i,T_{x_i(\gamma)}\Si)\leq C\eps(\gamma)$,
we can hence bound
\beqa
\label{est:dlength-hat}
\abs{dL(c^*)(X)}
&\leq C \eps(\gamma)^2+C\eps(\gamma)\left(\cos(\hat\al_1(c^*))+\cos(\hat\al_2(c^*))\right)
\leq C\eps(\gamma)^2.
\eeqa
Combined with \eqref{est:firstvar-hat-nearly-same} we thus conclude that 
\beq
\label{est:dist-L-proof}
\abs{L(c^*)-L(\gamma)}=\Big|{\int_0^1 \ddt L(\gamma_t)dt}\Big|=\Big|{\int_0^1 dL(\gamma_t)(X)dt}\Big|
\leq \abs{dL(c^*)(X)}+C\eps(\gamma)^2\leq C\eps(\gamma)^2
\eeq
which establishes the second claim \eqref{est:Loj-length} of Theorem \ref{thm:loj}.

\begin{remark}
{\rm 
If $\Si$ is analytic, then the expected analogues of the \Loj\ estimates \eqref{est:Loj-distance} and \eqref{est:Loj-length} hold (without imposing any form of non-degeneracy). Namely,
there exist $\beta_{1,2} = \beta_{1,2}(\eta, \Si)\in (0,1]$ such that \eqref{est:Loj-distance} holds with exponent $\beta_1$ replacing $1$ on the right-hand side, and \eqref{est:Loj-length} holds with exponent $1+\beta_2$ replacing $2$ on the right-hand side. 
Indeed, as the map $z\mapsto \LLeta(z)$ is analytic whenever $\Si$ is analytic, the classical \L ojasiewicz inequality for analytic functions on finite dimensional spaces guarantees that 
$\abs{z-\hat z^*}\leq C\abs{\na \LLeta(z)}^{\beta_1}$
for some $\beta_1 \in (0, 1]$.
The first estimate can be shown in the same way as \eqref{est:Loj-distance} but with this \L ojasiewicz inequality in place of the estimate \eqref{claim:LLeta-loj}.
\\
To prove the second estimate, observe that, up to minor modifications, one could replace the arc $c^*\in \Crit$ with the arc $\tilde{c}_\g \in \Circeta$ obtained in \Cref{cor:reduction} in the estimates starting from \eqref{est:l-si-i} and ending with \eqref{est:dist-L-proof}, to obtain $|L(\tilde c_\ga) -L(\g)| \leq C \e(\g)^2$ in place of \eqref{est:dist-L-proof}. Next, the classical finite dimensional (gradient) \Loj\ estimate for analytic functions guarantees that $|\LLeta(c_\g) - \LLeta(c^*)| \leq C |\nabla \LLeta(c_\g)|^{1+\beta_2}$ for some $\beta_2 \in (0,1]$. Pairing  \Cref{lemma:relation-Leta-L},  with the estimate \eqref{est:proof-thm-1-1}, we know $|\nabla \LLeta(\tilde{c}_\g)| \leq C\e(\g)$, thus allowing us to conclude the second estimate.

Examples of analytic support curves $\Si$ for which there exist $\eta>0$ and $c^*\in \Crit$ for which the non-degeneracy condition is violated but for which the resulting non-trivial Jacobi-field $v_\parallel$ is not generated by a family of critical points can e.g. be obtained by taking carefully chosen perturbations $\Si$ of a circle $\Si_0$: Fixing a circular arc $c^*$ that meets $\Si_0$ normally and choosing 
$\Si$ so that it agrees with $\Si_0$ up to order $2$ ensures 
that $c^*$ is also a critical point with non-trivial Jacobi-field $v_\parallel$ for the perturbed support curve $\Si$ and the corresponding $\eta=A_\Si(c^*)$. Further constraining the perturbation to be symmetric with respect to the line $\ell_\perp =\frac{1}{2}(x_1+x_2)+\langle v_\perp\rangle$  and so that 
$\kappa_{\Si}$ is strictly monotone near $x_{1,2}$ then excludes the possibility 
that $v_\parallel$ is generated by a family of critical points as these conditions ensure that any circular arc $\tilde c$ that meets $ \Si$ orthogonally in points close to $x_{1,2}$ must be 
symmetric with respect to $\ell_\perp$ (note that such $\tilde c\neq c^*$ will not enclose the prescribed area $\eta=A_{\Si}(c^*)$ and that $c^*$ is indeed an isolated critical point of the length on $\mathcal{B}_\eta$).\footnote{E.g. for  $\Si$  parametrized by $[0,2\pi] \ni\varphi\mapsto (1+ a\cos^3(2\varphi))e^{i\varphi} $, $\abs{a}>0$ small, the circular arc $c^*$ which intersects $\Si$ orthogonally at $e^{\pm i\pi/4}$ is such an isolated degenerate critical point for which the above quantitative estimates are not valid for $\beta_1=\beta_2=1$.}}
\end{remark}

\section{Exponential convergence of the gradient flow}
In this section, we recall some background on the area preserving curve shortening flow with Neumann free boundary conditions, including results of the first author that will be crucial to apply the flow to prove \Cref{thm: stability} in the next section. Then, we prove \Cref{mainthm-flow}.

\subsection{Background on the flow}
Parabolic regularity theory implies that a solution of (\ref{equ:apcsf}) with initial regularity $C^{2+\alpha}$ satisfies 
\[
\gamma \in C^{2+ \alpha, 1+ \frac{\alpha}{2}}\left(\Dom \times [0,T_{max}),\R^2\right) \cap  C^{\infty} \left(\Dom  \times(0,T_{max}), \R^2\right) \ \ \ \text{ for } 0<\alpha<1
\]
where $ C^{2+ \alpha, 1+ \frac{\alpha}{2}}$ denotes the usual parabolic H\"older spaces and $T_{max}>0$ is the maximal time of existence.
We will use the notation $\gamma_t:=\gamma(\cdot,t)$. This flow was constructed as (formal) $L^2$-gradient flow of the length with the constraint that the enclosed area is constant. It satisfies in particular  
\begin{equation}
    \label{eqn: dt length}
\frac{d}{dt}L(\gamma_t) = -\|\kappa_{\g_t}-\bar\kappa_{\g_t}\|_{L^2(ds_{\gamma_t})}^2\,.
\end{equation}
Due to the flow's non-local nature and the free boundary condition, preserved properties are rare to find. In~\cite{EMB2} it was shown that convexity of $\gamma_t$ is preserved under the flow. However, embeddedness and the property of being contained in $\R^2 \setminus \Omega$ is not preserved in general. Thus, for general initial data, understanding how to close up the curve to define the enclosed area is \emph{subtle}, as doing this naively might produces a family of closed curves whose enclosed area jumps by $\pm|\Omega|$. An example to illustrate this behavior can be found in Figure~\ref{Fig:self-int}. 

In this paper, we only deal with flows that stay outside of $\Omega$, and therefore we avoid this subtlety. Instead, Definition~\ref{def: sigma_gamma} gives a simple canonical way to close up the curve and hence to define the enclosed area, which by Remark~\ref{rmk:Asi-continuous} is continuous along the flow.

  \begin{figure}
 \begin{center}
 \begin{tikzpicture}[scale=.75]
\draw (0,0) circle (1.5cm);
\draw[out=90,in=0, thick] (0,1.5) to (-0.25,2.3);
\draw[->,out=180,in=90, thick] (-0.25,2.3) to (-2.3,0);
\draw[out=-90,in=180, thick] (-2.3,0) to (0,-2.5);
\draw[out=0,in=-90, thick] (0,-2.5) to (2.5,0);
\draw[out=90,in=0, thick] (2.5,0) to (0.4,2.3);
\draw[out=180,in=90, thick] (0.4,2.3) to (0.1,1.52);
\coordinate[label = left: ${\gamma_t}$] (A) at (-2.4,0);
\coordinate[label = -90: ${\Sigma}$] (B) at (-0.5,1.2);
\fill (-0.015,1.5) circle (1.2pt);
\fill (0.1,1.5) circle (1.2pt);
 \end{tikzpicture}
	  \end{center}
	   \caption{A solution of the flow  $\gamma_t$ can cross at the endpoints $\gamma_t(a_1)$, $\gamma_t(a_2)$ at some time $t>0$. If we naively close the curves $\gamma_t$ for each $t$ by connecting $\gamma_t(a_2)$ with $\gamma_t(a_2)$ via the positive orientation of $\Sigma$, then the  enclosed area jumps by $|\Omega|$ or $-|\Omega|$. Our choice of boundary curves closing up $\gamma_t$, Definition~\ref{def: sigma_gamma}, ensures that the algebraic enclosed area is continuous with respect to $t$ along the flow.} \label{Fig:self-int} 
	  \end{figure}

The following result due to the first author in \cite{EMB2} shows that, if the initial curve $\gamma_0$ is short enough in relation to the maximal curvature of $\Sigma$ and if its shape is not too bad regarding its isoperimetric quotient, then the above mentioned pathological behavior does not appear. In particular, the curves stay embedded and stay outside of $\Omega$. This theorem will be essential in the proof of \Cref{thm: stability} together with \Cref{mainthm-flow}.

\begin{theorem}[\cite{EMB2}] \label{thm: convex data flow}
Let $\Sigma$ be a convex $C^2$-Jordan curve and suppose $\eta \in (0, \areaC \kappamaxSi^{-2})$. 
Let $\g_0 \in \B_\eta$ be an embedded, convex curve of class $C^{2,\alpha}$ satisfying
\begin{align} \label{cond:converg}
 L(\gamma_0) < \tfrac{4}{5 \kappamaxSi} \arcsin(\eta/ L(\gamma_0)^2).
\end{align}
Then the solution $\{\ga_t\}_{t>0}$ of the free boundary area preserving curve shortening flow emanating from $\ga_0$ exists for all $t>0$. Moreover,  $|\int\kappa ds_{\gamma_t}|\leq 2\pi$ and for each $t\geq0$, $\gamma_t$ is embedded and intersects $ \Omega$ only at the endpoints. 
In particular, $\gamma_t\in \B_\eta$ for all $t\in[0,\infty)$. 
\end{theorem}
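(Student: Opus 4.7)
The plan is to establish all four listed properties---long-time existence, preservation of embeddedness, confinement to $\R^2\setminus\Om$, and the turning angle bound $|\phiturn(\g_t)|\leq 2\pi$---by combining preservation of convexity under the flow (already established in \cite{EMB2}) with the length monotonicity \eqref{eqn: dt length} and the sharp smallness hypothesis \eqref{cond:converg}.

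First, I would propagate the smallness hypothesis. By \eqref{eqn: dt length} and \eqref{est:length lower bound}, $L(\g_t)$ is non-increasing and satisfies $L(\g_t) \geq \IP(\eta) \geq (2\pi\eta)^{1/2}$. Since the map $L \mapsto \tfrac{4}{5\kappamaxSi}\arcsin(\eta/L^2) - L$ is strictly decreasing in $L$ on $[(2\pi\eta)^{1/2},\infty)$, condition \eqref{cond:converg} persists along the flow, giving $L(\g_t) < \tfrac{4}{5\kappamaxSi}\arcsin(\eta/L(\g_t)^2)$ for as long as the flow exists. For preservation of convexity I would use the evolution equation $\partial_t\kappa = \kappa_{ss}+\kappa^2(\kappa-\bar\kappa)$ in the interior, together with the boundary identity $\kappa_s=\pm\,\kappa\,\kappa_\Si$ at $a_1,a_2$ (obtained by time-differentiating the orthogonal boundary conditions and exploiting $\kappa_\Si\geq 0$); a Hopf boundary-point argument combined with the interior strong maximum principle then preserves $\kappa\geq 0$.

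Embeddedness and confinement to $\R^2\setminus\Om$ I would prove simultaneously by contradiction at a first bad time $t^*>0$. By continuity $\g_{t^*}$ is still convex and intersects $\Si$ orthogonally at its endpoints, but either touches $\Si$ at an interior non-endpoint or self-intersects at an interior point. In either scenario, convexity of both $\g_{t^*}$ and $\Si$, orthogonality at the endpoints, and the length bound $L(\g_{t^*})\leq L(\g_0)$ constrain $\g_{t^*}$ to a very specific degenerate configuration in which the arc essentially hugs a portion of $\Si$ from outside while still enclosing area $\eta$. A direct planar-geometry estimate then bounds the area enclosable by such a configuration from above by a multiple of $L(\g_{t^*})^2\sin(\tfrac{5}{4}\kappamaxSi L(\g_{t^*}))$, and the threshold \eqref{cond:converg}, together with the upper bound $\eta<\areaC\kappamaxSi^{-2}$, is calibrated precisely so that this estimate is strictly below $\eta$, contradicting $\g_{t^*}\in\B_\eta$.

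Once embeddedness and confinement are in place, Gauss--Bonnet applied to the piecewise smooth closed curve $\g_t+\si_{\g_t}$, whose two corners have interior angle $\pi/2$, yields $\phiturn(\g_t) = \pi - \int_{\si_{\g_t}}\kappa_\Si\, ds$, giving $|\phiturn(\g_t)|\leq\pi\leq 2\pi$. Long-time existence then follows from uniform curvature bounds: the turning angle and length lower bounds yield $\bar\kappa \leq \pi/\IP(\eta)$, and a maximum-principle argument applied to the curvature equation with the now-controlled Neumann data produces a uniform $L^\infty$ bound on $\kappa$, precluding finite-time singularities; standard parabolic bootstrap then gives smoothness on $(0,\infty)$. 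The main obstacle I anticipate is the embeddedness/confinement step: because the flow is non-local through the $\bar\kappa$ term, inward motion driven by high-curvature regions could in principle drive $\g_t$ into $\Om$, and the particular form of \eqref{cond:converg} and the explicit constant $\areaC$ must be tuned precisely so that this first-touching scenario is ruled out via a sharp planar-geometric estimate on the area of configurations hugging $\Si$.
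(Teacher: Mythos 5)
The paper does not reprove this theorem: it is imported wholesale from \cite{EMB2}, and the in-text ``proof'' is essentially a citation. The paper only checks that \eqref{cond:converg} together with the lower bound $L(\gamma_0)\geq (2\pi\eta)^{1/2}$ forces $L(\gamma_0)<\tfrac{1}{2\kappamaxSi}<d_\Si$, so that a turning-angle control from \cite{EMB3} localizes $\int\kappa_{\gamma_t}\,ds_{\gamma_t}$ to an interval of length $2\pi$ (with the correct interval pinned down by the convexity and embeddedness of $\gamma_0$), and then invokes \cite[Theorem~5.6]{EMB2} for global existence, embeddedness, convexity, and confinement to $\R^2\setminus\overline\Omega$. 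Your proposal attempts to reconstruct the underlying proof from scratch, which is a different and much larger task than what the paper undertakes.

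On the substance, there are genuine gaps. First, your Gauss--Bonnet computation is off by a sign: the closing arc $\si_{\gamma_t}$ is traversed with orientation opposite to $\Si$, so its turning angle is $\leq 0$, and the identity $\phiturn(\gamma_t)+\phiturn(\si_{\gamma_t})=\pi$ yields $\phiturn(\gamma_t)\in[\pi,2\pi]$, not $|\phiturn(\gamma_t)|\leq\pi$ (compare \eqref{est:rad-circ-lower}, where critical arcs have $\phiturn\in[\pi,2\pi)$). The desired bound $\leq 2\pi$ still follows, but the intermediate claim is wrong. Second---and this is the substantive gap---the planar-geometry area estimate that is supposed to rule out a first touching of $\Om$ or a first self-intersection is asserted but not supplied. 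As you yourself anticipate, this is the crux; the constant $\areaC$ of \eqref{def: c area bound} encodes exactly this estimate, and without it your contradiction argument has no content at the decisive step. Third, the long-time existence step claims that ``a maximum-principle argument\dots produces a uniform $L^\infty$ bound on $\kappa$.'' This does not follow from convexity plus a bound on $\bar\kappa$: the reaction term in $\partial_t\kappa=\kappa_{ss}+\kappa^2(\kappa-\bar\kappa)$ is cubic and cannot be controlled by the maximum principle alone, even with Neumann boundary data; the actual curvature bounds require additional integral and monotonicity estimates that your outline does not provide. In short, the route you sketch is reasonable in outline, but the two steps you would need to carry out (the exclusion of a first bad time, and the a priori curvature bound) are precisely the substance of \cite{EMB2}, and they are missing.
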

\begin{proof}
This theorem is a summary of the several main results of \cite{EMB2}. We note that condition (\ref{cond:converg}) implies $L(\gamma_0)<\frac{1}{2\kappa_{max}(\Sigma)}<d_\Sigma$,
 where  $d_\Si$ is the width defined in \eqref{def: width}. By \cite[Proposition~2.7]{EMB3}, if $\{\g_t\}_{t \in [0,T]}$ is a solution to the flow \eqref{equ:apcsf} with initial curve $\g_0$ satisfying $L(\g_0) <d_\Si$, then for every $t \in [0,T_{max})$, the  closeup curve $\sigma_{\g_t}$ from \Cref{def: sigma_gamma} has turning angle at most $\pi$, and moreover there exists some $l \in \mathbb{Z}$ such that 
$
(2l-2)\pi < \int \kappa_{\g_t} \, ds_{\g_t} < 2 l\pi$ for all $t \in [0,T_{max}).$
The assumption that $\g_0$ is convex and embedded guarantees that $l=1$ {if $\ga_0$ is positively oriented and $l=0$ if $\ga_0$ is negatively oriented}.\\
Furthermore, by \cite[Theorem~5.6]{EMB2}, we know that $\gamma_t$ stays outside of $\bar\Omega$ and, together with the line segment from $x_2(\gamma_t)$ to $x_1(\gamma_t)$, $\gamma_t$ traces out a convex domain for $t\in[0,\infty)$. In particular, 
$\g_t \in \B_\eta$.
\end{proof}

\subsection{Proof of \Cref{mainthm-flow}}
In this section, we prove \Cref{mainthm-flow}.
In the proof, we will need to compare the $L^2$ norm of the velocities of $\g_t$ and of its constant speed reparametrization on a fixed interval:

\begin{lemma}\label{lem:L2equivalence-general}
  Let $T>0$ and $\gamma: \Dom\times[0,T)\to\R^2$ be a $C^{2,1}$-family of curves moving in normal direction, i.e.\ $(\partial_t \gamma)^T=0$. We denote by $\tilde\gamma$ its orientation preserving reparametrization by constant speed on $[0,1]$. Then we have
  \begin{align}\label{est:L2-equiv-general}
\left| L(\gamma_t) \cdot \|\partial_t\tilde\gamma_t\|_{L^2([0,1])}^2 -  \|\partial_t \gamma_t\|_{L^2(ds_{\gamma_t})}^2\right| \leq\ 4 L(\gamma_t) \,\bigg(\int |\kappa||\partial_t \gamma|\,  ds_{\gamma_t}\bigg)^2.
  \end{align}
\end{lemma}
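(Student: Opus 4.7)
The plan is to reduce the inequality to a pointwise bound on the tangential velocity induced by the reparametrization. Let $\phi_t:[a_1,a_2]\leftarrow [0,1]$ be the orientation-preserving diffeomorphism such that $\tilde\gamma_t(p)=\gamma_t(\phi_t(p))$ has constant speed $L(\gamma_t)$, i.e.\ $|\gamma_t'(\phi_t(p))|\,\partial_p\phi_t(p)=L(\gamma_t)$. Differentiating in $t$ gives
\[
\partial_t\tilde\gamma_t(p) = \partial_t\gamma_t(\phi_t(p)) + (\partial_t\phi_t(p))\,\gamma_t'(\phi_t(p)).
\]
Since $(\partial_t\gamma_t)^T=0$, the two terms on the right are orthogonal, so $|\partial_t\tilde\gamma_t|^2 = |\partial_t\gamma_t|^2 + (\partial_t\phi_t)^2|\gamma_t'|^2$ at $\phi_t(p)$. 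A change of variables $u=\phi_t(p)$ using the speed relation converts $\int_0^1|\partial_t\gamma_t(\phi_t(p))|^2\,dp$ into $\frac{1}{L(\gamma_t)}\|\partial_t\gamma_t\|_{L^2(ds_{\gamma_t})}^2$. Multiplying through by $L(\gamma_t)$, the claim \eqref{est:L2-equiv-general} reduces to the pointwise estimate
\[
\bigl((\partial_t\phi_t)(p)\,|\gamma_t'(\phi_t(p))|\bigr)^2 \;\le\; 4\,K^2, \qquad K:=\int|\kappa_{\gamma_t}|\,|\partial_t\gamma_t|\,ds_{\gamma_t}.
\]

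To verify this pointwise bound I would work with the inverse $\psi_t:=\phi_t^{-1}:[a_1,a_2]\to[0,1]$, which is given explicitly by $\psi_t(u)=s_{\gamma_t}(u)/L(\gamma_t)$ with $s_{\gamma_t}(u)=\int_{a_1}^u|\gamma_t'|\,dv$. The key input is the standard first-variation identity for the length element: because $\partial_t\gamma_t$ is normal, a direct computation using $\partial_u\nu_{\gamma_t}=-|\gamma_t'|\kappa_{\gamma_t}\tau_{\gamma_t}$ gives
\[
\partial_t|\gamma_t'| \;=\; -\kappa_{\gamma_t}\,(\partial_t\gamma_t\cdot \nu_{\gamma_t})\,|\gamma_t'|.
\]
Integrating and taking absolute values yields both $|\partial_t s_{\gamma_t}(u)|\le K$ and $|\partial_t L(\gamma_t)|\le K$, so since $0\le s_{\gamma_t}(u)\le L(\gamma_t)$,
\[
|\partial_t\psi_t(u)| \;\le\; \frac{K}{L(\gamma_t)} + \frac{s_{\gamma_t}(u)\,K}{L(\gamma_t)^2} \;\le\; \frac{2K}{L(\gamma_t)}.
\]

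Finally, differentiating $\phi_t(\psi_t(u))=u$ in $t$ gives $(\partial_t\phi_t)(\psi_t(u)) = -\phi_t'(\psi_t(u))\,\partial_t\psi_t(u)$, and $\phi_t'(\psi_t(u))=L(\gamma_t)/|\gamma_t'(u)|$, so
\[
(\partial_t\phi_t)(p)\,|\gamma_t'(\phi_t(p))| \;=\; -L(\gamma_t)\,\partial_t\psi_t(\phi_t(p)),
\]
whose absolute value is bounded by $2K$ by the previous step. Squaring and integrating over $[0,1]$, then multiplying by $L(\gamma_t)$, delivers the right-hand side of \eqref{est:L2-equiv-general}. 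I don't anticipate a serious obstacle: the only computation requiring care is the first variation of $|\gamma_t'|$, and the factor of $4$ in the estimate comes cleanly from the two contributions $\partial_t s_{\gamma_t}$ and $\partial_t L(\gamma_t)$ each being bounded by $K$.
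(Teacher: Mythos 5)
Your argument is correct and follows essentially the same route as the paper's proof (which itself follows \cite{RuppSpener}): the chain-rule decomposition of $\partial_t\tilde\gamma_t$ into normal and tangential parts, the first-variation identity $\partial_t|\gamma_t'| = -\kappa_{\gamma_t}(\partial_t\gamma_t\cdot\nu_{\gamma_t})|\gamma_t'|$, and the computation of the $t$-derivative of the arclength reparametrization are all the same. The only minor difference is that you establish the \emph{pointwise} bound $\bigl|(\partial_t\phi_t)(p)\bigr|\,\bigl|\gamma_t'(\phi_t(p))\bigr|\le 2K$ on the tangential speed and then square and integrate, whereas the paper bounds its $L^2$ norm directly via $(a+b)^2\le 2a^2+2b^2$; both yield the same factor of $4$.
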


\begin{proof} 
We closely follow \cite{RuppSpener}.
We write $\g_t(p) = \g(p,t)$ and use the notation $\ga'(p,t)$ to mean $\partial_p \g(p,t)$.
For $t \in [0,T),$ consider the strictly increasing function $p\mapsto
  \varphi(p,t): = {L(\gamma_t)}^{-1} \int_{a_1}^p |\gamma'(q,t)|\, dq$ and let $\psi(\cdot,t):[0,1]\to\Dom$ denote its inverse. This way, the unit speed reparametrzation of $\g_t$ on $[0,1]$ is given by $\tilde\gamma(p,t):= \gamma(\psi(p,t),t)$. By the chain rule,
  \[
  \partial_t \tilde{\g}(p,t) = \partial_t\g (\psi, t) + \partial_t \psi(p,t) \, \ga'(\psi, t)  \qquad \text{ where }\psi =\psi(p,t).
  \]
 For the normal speed, the change of variable $p =\varphi(q,t) $ shows $\int_0^1 |\partial_t \g(\psi(p,t), t)|^2\,dp  =L(\g_t)^{-1} \| \partial_t\ga_t \|_{L^2(ds_{\g_t})}^2$. So, to prove the lemma, it remains to estimate the squared $L^2$ norm of the tangential speed, which by the same change of variable is
  \begin{equation}
      \label{eqn: reparam claim}
   I:=  \int_0^1 |\partial_t \psi(p,t)|^2\, |\g'(\psi(p,t)|^2 \,dp = L(\g_t)^{-1}\int_{a_1}^{a_2} \big|\partial_t\psi\big|_{(\varphi(q,t),t)} \big|^2 |\ga'(q,t)|^3 \, dq\,.
  \end{equation}
Toward this aim, we differentiate the identity $q  =\psi(\varphi(q,t),t)$ with respect to $t$ and $q$ to find 
\begin{align*}
    \partial_t\psi\big|_{(\varphi(p,t),t)} = - \partial_p\psi \big|_{(\varphi(p,t),t)}  \partial_t\varphi(p,t) = \mfrac{L(\gamma_t)}{|\gamma'(p,t)|} \partial_t\varphi(p,t).
\end{align*}
Now, from the Frenet equation $\partial_p \tau = \kappa \nu |\partial_p\gamma|$ and the fact that $\partial_t\g$ is orthogonal to $\tau$, we have 
 \begin{align*}
 \partial_t \int_{a_1}^p |\gamma'| &
 = \int_{a_1}^p \langle { \partial_t\gamma'}, \tau\rangle = \int_{a_1}^p \partial_p (\langle \partial_t\gamma ,\tau \rangle) - \langle \partial_t\gamma, \partial_p\tau\rangle = - \int_{a_1}^p  \langle \partial_t\gamma, \kappa\nu\rangle |\gamma'|.
 \end{align*}
Therefore,
 $  \partial_t \varphi(p,t) = -L(\g_t)^{-1}({\partial_t L(\gamma_t)} \,\varphi(p,t) + \int_{a_1}^p\langle \partial_t\gamma, \kappa \nu\rangle |\gamma'_t|).
$
Substituting this into the expression for $I$ in \eqref{eqn: reparam claim} and keeping in mind that $|\varphi|\leq 1$, we have  
\begin{align*}
 I  = L(\gamma_t)^{-1}\int_{a_1}^{a_2} \bigg({\partial_t L(\gamma_t)}\, \varphi(p,t)+ \int_{a_1}^q \langle \partial_t \gamma,\kappa\nu\rangle |\gamma'|\bigg)^2 ds_{\gamma_t}(q)\leq 2 \, (\partial_t L(\gamma_t))^2 + 2 \bigg( \int |\partial_t\gamma| |\kappa| ds_{\gamma_t} \bigg)^2.
\end{align*}
Finally, noting that $\frac{d}{dt} L(\gamma_t) = - \int \langle \partial_t\gamma,\kappa\nu\rangle\,  ds_{\gamma_t} \leq \int |\kappa||\partial_t \gamma|\,  ds_{\gamma_t}$ completes the proof.
\end{proof}

\begin{proof}[{Proof of \Cref{mainthm-flow}}]
{We proceed in several steps.} Let $\g_t$ be a global-in-time solution to the flow as in the statement of the theorem. We recall that $L(t):= L(\g_t)$ is monotone along the flow and bounded below by $L(t)\geq \IP(\eta)$. In particular,  and thus $L(\infty):=\lim_{t\to\infty} L(t)$ exists and is contained in $  [\IP(\eta), L(0)]$. Let $\bar L$ be an upper bound on $L(\gamma)$.

We recall that by the \Loj\ estimate \eqref{est:Loj-length} of Theorem~\ref{thm:loj}, there is a  constant $C_1=C_1(\eta, \Sigma, \bar{L}, \bar\phi)>0$ such that
\begin{align}\label{eqn: application of Loj}
| L(t) - {\ell_*(t)}|\leq C_1 \|\kappa_{\gamma} - \bar\kappa_{\gamma_t}\|_{L^2(ds_\gamma)}^{2}
\end{align}
for $\ell_*(t)$ chosen so that $|L(t) - {\ell_*(t)}| = \min \{|L(t)- L(c^*)| : c^* \in \Crit \}.$ 
Note that this minimum is achieved:
in the case of the circle, $\ell_0(\eta)=\IP(\eta)$ is the only critical value, while otherwise 
 our assumption on $(\Sigma,\eta)$ implies that the set of critical values of the length $L$ is discrete for the given $\eta$.
In the following, we denote by
\beq \label{eqn: energy levels}
\IP(\eta)=:\ell_0(\eta) < \ell_1(\eta) < ... < \ell_m(\eta)< ...
\eeq
the critical values of the length, and let $N$ be so that $\ell_{N-1}(\eta) \leq \bar{L}<\ell_N(\eta)$.

{\it Step 1:}
As a first step, we show there is a constant $C=C(\eta, \Sigma, \bar{L},\bar\phi)$ such that for any $0\leq t_1< t_2\leq \infty$,
        \begin{equation}\label{est:dist-geometric}
   \int_{t_1}^{t_2} \|\partial_t \gamma_t\|_{L^2(ds_{\gamma_t})} dt \leq C(L(t_1)- L(t_2))^{1/2} \,.
        \end{equation}
We divide the time interval $[0,T)$ into subintervals defined by $T_0=0<T_1 < ... <T_K,$
where $T_i$ are the times where $L(\gamma_t)$ either reaches a critical value or one of the midpoints of the intervals $[\ell_m(\eta),\ell_{m+1}(\eta)]$, i.e. $L(\gamma_t) \in\{\ell_m(\eta): m\leq N\} \cup \{\frac{1}{2}(\ell_{m+1}(\eta) - \ell_m(\eta)): m\leq N-1\}$. Notice that $K \leq 2N$ is bounded by a constant depending only on $L(\g_0)$ since the length is monotone along the flow.

On intervals $[T_i,T_{i+1}]$ on which   $L(\gamma_t)\in[\ell_m(\eta), \frac{1}{2}(\ell_{m+1}(\eta) - \ell_m(\eta))]$ for some $m$, we can apply \eqref{eqn: application of Loj} with $\ell_*(t)\equiv \ell_m(\eta)$, and use that length decays according to  \eqref{eqn: dt length}  to bound 
\begin{align*}
  -\mfrac{d}{dt} \left(L(\gamma_t) - \ell_m(\eta)\right)^{\frac{1}{2}}
  & = \mfrac{1}{2} \left(L(\gamma_t) - \ell_m(\eta)\right)^{-\frac{1}{2}}\left(-\mfrac{d}{dt}L(\gamma_t)\right)\\
  & = \mfrac{1}{2} \left(L(\gamma_t) - \ell_m(\eta)\right)^{-\frac{1}{2}} \int(\kappa_{{\ga_t}}-\bar\kappa_{{\ga_t}})^2 ds_{\gamma_t}\geq \mfrac{1}{2 C_1} \|\partial_t\gamma\|_{L^2(ds_{\gamma_t})}\,.
 \end{align*}
Integrating over subintervals $[t',t'']\subset [T_i,T_{i+1}]$ and using the fact that $(a-b)^2 \leq a^2-b^2$ for $a\geq b\geq 0$ yields
 \beq\label{eqn: intermed loj int}
 \int_{t'}^{t''} \|\partial_t\gamma_t\|_{L^2(ds_{\gamma_t})} dt \leq 2 C_1\left( (L(\gamma_{t'}) - \ell_m(\eta))^{\frac{1}{2}} - (L(\gamma_{t''}) - \ell_m(\eta))^{\frac{1}{2}} \right) \leq 2 C_1 \left(L(\gamma_{t'}) - L(\gamma_{t''})\right)^{\frac{1}{2}}\,.
 \eeq
 The same argument is applicable also 
 on intervals $[T_i, T_{i+1}]$ on which instead $L(\gamma_t)\in[\frac{1}{2} (\ell_{m-1}(\eta) - \ell_m(\eta)),\ell_m(\eta)]$,
 except that we have to 
 consider the evolution of the square root of $-(L(\gamma_t) - \ell_m(\eta))$ instead of $(L(\gamma_t) - \ell_m(\eta))$ 
 since $L(\gamma_t) \leq \ell_m(\eta)$ on such intervals.  Thus \eqref{eqn: intermed loj int} holds in this case as well.

Given arbitrary $0\leq t_1\leq t_2\leq \infty$, we can divide the interval $[t_1,t_2]$ into finitely many subintervals $I_k$ such that each $I_k$ is fully contained in one of the intervals $[T_i,T_{i+1}]$ 
and add the inequalities from the two cases above. Applying Cauchy-Schwarz inequality to bound the resulting sum  on the right-hand side and using that the resulting series is telescoping, this yields \eqref{est:dist-geometric} with $C = 2C_1K^{1/2}$.

\medskip 

{\it Step 2:}
Next, we prove analogue for the  constant speed reparametrization $\tilde{\g}_t$  of $\g_t$ on $[0,1]$, i.e. show that for any $0 \leq t_1<t_2\leq \infty$,
       \begin{equation}
           \label{eqn: dist reparam proof}
       \int_{t_1}^{t_2} \|\partial_t \tilde{\gamma}_t\|_{L^2([0,1])} \, dt \leq C(L({t_1})- L({t_2}))^{1/2} \,.
           \end{equation}
To this end, we first recall from Lemma  \Cref{lem:L2equivalence-general} that 
\begin{align*}
    \| \partial_t\tilde{\g}_t\|_{L^2([0,1])} \leq \mfrac{1}{L(\g_t)^{1/2}} \| \partial_t \g_t\|_{L^2(ds_{\g_t})} + 2 \| \kappa_{\g_t}\|_{L^2(ds_{\g_t})}\, \| \partial_t \g_t\|_{L^2(ds_{\g_t})}\,.
\end{align*}
Since $\vert\partial_t \g_t\vert 
= \vert \kappa_{\g_t} - \bar\kappa_{\g_t}\vert
$, we have 
$\| \kappa_{\g_t}\|_{L^2(ds_{\g_t}) }= ( \| \partial_t \g_t\|_{L^2(ds_{\g_t})}^2 + \bar{\kappa}_{\g_t}^2 L(\g_t))^{1/2}\leq  \| \partial_t \g_t\|_{L^2(ds_{\g_t})}+\bar{\kappa}_{\g_t}L(\g_t)^{1/2}
$. 
Moreover, $\g_t \in \B_\eta$ and thus $L(\g_t)\geq \IP(\eta)$. So, keeping in mind that $|\bar{\kappa}_{\g_{t}}| L(\g_t)^{1/2} = |\phiturn(\g_t)|\,L(\g_t)^{-1/2},$ we find 
\begin{align*}
      \| \partial_t\tilde{\g}_t\|_{L^2([0,1])} \leq \mfrac{2(1+ \bar\phi)}{\IP(\eta)^{1/2}} \, \| \partial_t \g_t\|_{L^2(ds_{\g_t})} + 2\| \partial_t \g_t\|_{L^2(ds_{\g_t})}^2\,.
\end{align*}
Using the fact that $\frac{d}{dt}L(\g_t)=  -\| \partial_t \g_t\|_{L^2(ds_{\g_t})}^2$ and integrating, we find 
\begin{align}\label{eqn: intermed}
    \int_{t_1}^{t_2} \| \partial_t \tilde{\g}_t\|_{L^2([0,1])}\,dt  \leq \mfrac{2(1+ \bar\phi)}{\IP(\eta)^{1/2}}   \int_{t_1}^{t_2}  \| \partial_t {\g}_t\|_{L^2(ds_{\g_t})} \, dt + 2( L({t_1}) - L({t_2}))\,. 
\end{align}
Combining \eqref{eqn: intermed} with \eqref{est:dist-geometric} and using that 
$L(t_1) - L(t_2)\leq \bar L^{1/2}(L(t_1) - L(t_2))^{1/2}$ 
completes the proof of \eqref{eqn: dist reparam proof}.

As \eqref{est:dist-geometric} and  \eqref{eqn: dist reparam proof}  are in particular 
applicable for $t_1 =0$ and $t_2 =\infty$, and in this case yield the claimed  estimate \eqref{est:dist-total}, it remains to prove the asymptotic convergence \eqref{eqn: exp conv}.
\medskip

{\it Step 3.} 
We recall that it is one of the main results of \cite{EMB2} that for global-in-time solutions with bounded  turning angle 
$|\bar\kappa(t)|\leq c_0$ and  $L(\gamma_t)\geq c_1>0$ for all $t\in[0,\infty)$, all derivatives of the curves are bounded $|\partial_t^k \partial_p^l \tilde\gamma| \leq C(k,l,\Sigma, c_0, c_1, L(\gamma_0))$ on $[0,1]\times[1,\infty)$, see also Theorem~2.11 in \cite{EMB3}.

At the same time, from the Sobolev inequality and Gagliardo-Nirenberg interpolation inequalities (see, e.g. \cite[p.125]{Nirenberg}), for any $k \in \mathbb{N}$, there is a constant $C=C(k)$ such that
\beqa\label{eqn: embed}
\| u\|_{C^{k}[0,1]} \leq C\norm{u}_{H^{k+1}([0,1])} &\leq 
C( \norm{u}_{H^{2k+2}([0,1])}^{1/2} \, \norm{u}_{L^2([0,1])}^{1/2} + \norm{u}_{L^2([0,1])})
\eeqa
 for any $u \in H^{2k+2}([0,1])$. 
As the estimate \eqref{eqn: dist reparam proof} obtained in step 2 allows us to bound 
\beq
\|\tilde \gamma_{t_1}- \tilde \gamma_{t_2}\|_{L^2([0,1])}  \leq\int_{t_1}^{t_2} \|\partial_t \tilde\gamma_t\|_{L^2([0,1])} \, dt \leq C (L(t_1)-L(t_2))^{1/2}
\eeq 
we can thus apply \eqref{eqn: embed} and the aforementioned derivative estimates to $\g_t$ for $t\geq1$ to find that 
\beqa\label{eqn: control by length}
\norm{\tilde \gamma_{t_1}-\tilde \gamma_{t_2}}_{C^k([0,1])} &\leq C_{k}
(L(t_1)-L(t_2))^{1/4} \text{ for any } k \text{ and for all } 1\leq t_1<t_2<\infty\,.
\eeqa
In particular,  the curves $\gamma_t$ are Cauchy with respect to any $C^k$ norm, and so converge smoothly to a unique limit $\gamma_\infty$, which must be an element of $\Crit$ since 
$\eps(\gamma_t)\to 0$, compare \eqref{est:Loj-distance}. 
Thanks to \eqref{eqn: control by length}, it remains to show that $L(t)-L(\infty)$ decays exponentially as $t \to \infty.$ Since this trivially holds for $t$ less than a fixed constant $t_1$ and suitably chosen $C$, it suffices to consider times $t\geq t_1$ with $t_1$ chosen below.

If $L(t) = L(\infty)$  for some $t<\infty$, then this trivially holds by monotonicity \eqref{eqn: dt length} of the length. Otherwise, we let $m$ be so that $L(\infty)=\ell_m(\eta)$ (recall \eqref{eqn: energy levels}) and
choose $t_1>1$ sufficiently large so that 
$L(t_1)\leq \half (\ell_m(\eta)+\ell_{m+1}(\eta)). $
This allows us to apply the  the \Loj\ inequality 
\eqref{eqn: application of Loj} with $\ell_*(t)=\ell_m(\eta)=L(\infty)$ on all of $[t_1,\infty)$. 

This ensures that for any $t\geq t_1$
\begin{align*}
-\frac{d}{dt} \log\left( L(t) - L(\infty)\right) = \frac{-\frac{d}{dt}L(t)}{ L(t) - L(\infty)} = \frac{\int (\kappa_{\g_t}-\bar\kappa_{\g_t})^2 ds_{\gamma_t}}{ L(t) - L(\infty)} \geq C_1^{-1}\,.
\end{align*}
Integrating this over $[t_1,t]$ yields 
$
-\log\left(L(t) - L(\infty) \right) \geq C_1^{-1}(t-t_1)  -\log\left(L(t_1) - L(\infty)\right)=C_1^{-1}t -C$ which yields the desired exponential decay 
$\left(L(t) - L(\infty) \right)\leq Ce^{C_1^{-1} t}$ that is required to complete the proof of the theorem.  
\end{proof}

\section{Quantitative stability}
This section is dedicated to the proof of \Cref{thm: stability}. 
We fix, for the entirety of the section, a convex body $\Omega$ whose boundary $\Sigma$ is a  $C^{2}$ curve with positively oriented parametrization $\si: \mathbb{S}^1 \to \R^2$.
Recall the universal constant $\areaC$ defined in \eqref{def: c area bound}. Given a set of finite perimeter $E$ in $\R^2\setminus\Omega$ with $|E|=\eta,$ define the isoperimetric deficit
\beq\label{def:peri-defect}
\de_\eta(E):=P(E; \R^2 \setminus{\Omega})- \IP(\eta).
\eeq
\Cref{thm: stability} is shown in two steps.
First, in \Cref{lem:distances}, we prove \Cref{thm: stability} for relatively convex sets with acute contact angle and small isoperimetric deficit.
More precisely, we call an open set $F\subset \R^2 \setminus{\Omega}$ {\it relatively convex} (with respect to $\R^2 \setminus {\Omega})$ if $F$ is connected
and $F$ is the intersection of $\R^2 \setminus {\Omega}$ and an open convex set in $\R^2$. 
If $F$ is a relatively convex set whose boundary coincides with $\Si$ on a (connected) set of positive length, then $\partial F \setminus \Si$ is a (geometrically convex) rectifiable curve in $\R^2 \setminus\Omega$ with distinct endpoints $x_0, x_1 \in \Si$.
Let $H_*$ be the half-plane containing $F\setminus\Omega$ with $x_0$ and $x_1$ in $\partial H_*$. The interior contact angle $\alpha_0 \in (0,\pi)$ of $\partial F\setminus \Si$ at $x_0$ is defined as the smallest interior angle of a wedge containing $F\setminus\Omega$ formed by $H_*$ and a half plane $H$ with $x_0 \in \partial H$. The interior contact angle $\alpha_1$ at $x_1$ is defined analogously.
\begin{proposition}
    \label{lem:distances}
    Fix $\eta \in (0,\areaC\, \kappamaxSi^{-2}]$ and assume that either $\Sigma$ is a circle or that the pair $(\Sigma, \eta)$ satisfies  Assumption~\ref{ass:LojassIntro}.
    There are explicit constants $\delta_0 = \delta_0(\Sigma,\eta)>0$ and $C_0 = C_0(\Sigma, \eta)$ such that the following holds.
    Let $F \subset \R^2 \setminus {\Omega}$ be an open, relatively convex set with $|F|=\eta$  whose boundary coincides with $\Si$ on a set of positive length and whose interior contact angles are at most $\pi/2$. If $\delta_\eta(F) \leq \delta_0$, then
\begin{equation}
    \label{eqn: stability for good sets}
 \inf_{E_* \in \setmins} d_H(\partial F,\partial E_*)^2 +     \inf_{E_* \in \setmins} |F\Delta E_*|^2  \leq C_0\, \delta_\eta(F). 
\end{equation}
\end{proposition}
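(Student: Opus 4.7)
My plan is to realize the flow-based strategy outlined in the introduction: evolve the convex relative boundary of $F$ under the free boundary area-preserving curve shortening flow, identify the smooth limit $c^*\in\Crit$ via \Cref{mainthm-flow}, and use the total displacement bound \eqref{est:dist-total} to control both the $L^1$ symmetric difference and the Hausdorff distance between $F$ and a minimizer. Since \Cref{thm: convex data flow} demands a $C^{2,\alpha}$ convex initial arc meeting $\Si$ orthogonally, I will first approximate $F$ by $\tilde F$ enjoying these properties, with $|\tilde F|=\eta$ and $\delta_\eta(\tilde F)$ arbitrarily close to $\delta_\eta(F)$; this combines mollification of $\partial F\setminus\Si$ with a short normal correction near the endpoints, where relative convexity and the contact-angle hypothesis $\leq\pi/2$ keep the extra length controlled. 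Choosing $\delta_0$ no larger than the threshold in \Cref{lem: sat flow hp} ensures $L(\gamma_{\tilde F})=\IP(\eta)+\delta_\eta(\tilde F)\leq\IP(\eta)+\delta_0$ satisfies \eqref{cond:converg}, so \Cref{thm: convex data flow} produces a global-in-time flow $\{\gamma_t\}_{t\geq 0}\subset\B_\eta$ with $|\phiturn(\gamma_t)|\leq 2\pi$, remaining embedded and outside $\Om$.

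With these hypotheses, \Cref{mainthm-flow} gives smooth exponential convergence $\gamma_t\to c^*\in\Crit$ together with the displacement bound \eqref{est:dist-total}. By monotonicity $L(c^*)\leq\IP(\eta)+\delta_0$. Under \Cref{ass:LojassIntro} the critical values of $L$ on $\B_\eta$ are discrete, so there is a spectral gap $\ell_1(\eta)>\IP(\eta)$ to the first non-minimum critical value (when $\Si$ is a circle, minimizers are in fact the only critical points up to rotation, so effectively $\ell_1(\eta)=+\infty$). I will therefore choose $\delta_0<\ell_1(\eta)-\IP(\eta)$, which forces $L(c^*)=\IP(\eta)$, so that $c^*$ bounds some $E_*\in\setmins$. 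Since the flow is purely normal, the swept-area inequality and Cauchy--Schwarz give
\[
|\tilde F\Delta E_*|\leq\int_0^\infty\!\!\int_{\gamma_t}|\partial_t\gamma_t|\,ds_{\gamma_t}\,dt\leq L(\gamma_0)^{1/2}\int_0^\infty\|\partial_t\gamma_t\|_{L^2(ds_{\gamma_t})}\,dt\leq C\,\delta_\eta(\tilde F)^{1/2},
\]
where the last inequality uses \eqref{est:dist-total} and $L(\gamma_0)-L(c^*)=\delta_\eta(\tilde F)$. Squaring and passing to the limit in the regularization produces the $L^1$ half of \eqref{eqn: stability for good sets}.

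The main obstacle is upgrading the displacement estimate to Hausdorff control. My plan is to split $[0,\infty)=[0,1]\cup[1,\infty)$: on $[1,\infty)$, the uniform higher-order derivative bounds on $\tilde\gamma_t$ supplied by \Cref{mainthm-flow} combined with Sobolev interpolation convert the reparametrized $L^2$ displacement bound in \eqref{est:dist-total} into a $C^0$ bound whose total-in-time integral is still $O(\delta_\eta(\tilde F)^{1/2})$; on $[0,1]$, short-time parabolic smoothing from the $C^{2,\alpha}$ initial datum yields uniform $C^1$-in-space control, allowing the same interpolation to operate. Combined, this bounds $\|\tilde\gamma_0-\tilde c^*\|_{C^0([0,1])}$, and hence $d_H(\partial\tilde F,\partial E_*)$, by $C\,\delta_\eta(\tilde F)^{1/2}$. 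A potentially cleaner alternative exploits the convexity of every $\gamma_t$ and $c^*$ to relate $d_H$ directly to $|\tilde F\Delta E_*|$ via an elementary planar convex-geometry lemma, reducing the Hausdorff estimate to the $L^1$ estimate just established. The most delicate technical point will be the passage to the limit in the regularization: one must control the approximation error uniformly in the smoothing parameter so that the sharp $\delta_\eta(F)$-scaling is preserved in both estimates simultaneously.
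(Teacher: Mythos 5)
Your overall strategy for the $L^1$ half of \eqref{eqn: stability for good sets} is correct and matches the paper: mollify $F$ to a relatively convex set bounded by a convex $C^{2,\alpha}$ curve meeting $\Si$ orthogonally (this is the paper's \Cref{lem: mollify}), run the flow via \Cref{thm: convex data flow} and \Cref{mainthm-flow}, choose $\delta_0$ below the spectral gap so the limit $c^*$ is a minimizer, and then bound the symmetric difference by the swept area plus \eqref{est:dist-total}. That part is sound.

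The Hausdorff upgrade, however, has a genuine gap, and it is exactly the point where the paper introduces a new idea. Your interpolation plan does not give the right scaling. If you interpolate at the level of the velocity, you need to integrate $\|\partial_t\tilde\gamma_t\|_{L^2}^{1/2}$ in time, but $\int_1^\infty\|\partial_t\tilde\gamma_t\|_{L^2}^{1/2}\,dt$ is \emph{not} controlled by $(\int_1^\infty\|\partial_t\tilde\gamma_t\|_{L^2}\,dt)^{1/2}$ -- Cauchy--Schwarz goes the wrong way and the integrand $\e^2/(1+t)^2$ shows the failure can be total. If instead you interpolate at the level of the difference $\tilde\gamma_0-\tilde c^*$, the $H^2$ seminorm of $\tilde\gamma_0-\tilde c^*$ is bounded by a constant $M$ depending only on $\Si,\eta$ but \emph{not} small in $\delta_\eta$, so $\|\tilde\gamma_0-\tilde c^*\|_{C^0}\lesssim M^{1/2}\delta_\eta^{1/4}$, yielding $d_H^2\lesssim\delta_\eta^{1/2}$, a power short of what is claimed. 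Your ``cleaner alternative'' via planar convex geometry has the same problem: for convex bodies with uniformly controlled in- and out-radii the best generic comparison is $d_H\lesssim|\cdot\,\Delta\,\cdot|^{1/2}$, and combined with $|F\Delta E_*|\lesssim\delta_\eta^{1/2}$ this again gives only $d_H^2\lesssim\delta_\eta^{1/2}$. The paper closes this gap with \Cref{lem: cacciopoli}, a reverse-Poincar\'e estimate tailored to the problem: using that both $\gamma$ and $c^*$ meet $\Si$ orthogonally and enclose area $\eta$, and that $c^*$ has constant curvature, it shows $\|w'\|_{L^2}^2\leq C(L(\gamma)-L(c^*))+C\|w\|_{L^2}^2$ with $w=\tilde\gamma-\tilde c^*$. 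Combined with $\|w\|_{L^2}\leq C\delta_\eta^{1/2}$ from the reparametrized half of \eqref{est:dist-total} and a one-dimensional Sobolev inequality, this gives $\|w\|_{C^0}^2\leq C\delta_\eta$, which is exactly the bound the interpolation approach fails to produce. Without a substitute for this lemma, your proposed proof of the Hausdorff estimate is incomplete.
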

Recall that $\setmins$ denotes the collection minimizers of \eqref{eqn: isop problem def}.
\Cref{lem:distances} is shown in Section~\ref{ssec: stability for convex}. Next, through a by-hand reduction procedure, we show that it is always possible to reduce to the setting of \Cref{lem:distances}.
\begin{proposition}\label{prop: summary reduction}  Fix $\eta \in (0,\areaC\, \kappamaxSi^{-2}]$.
There are explicit constants $\delta_1 = \delta_1(\Sigma, \eta)>0$ and $C_1= C_1(\Sigma, \eta)>0$ such that, for any set of finite perimeter $E$ in  $\R^2 \setminus {\Omega}$  with $|E| =\eta$ and $\delta_\eta(E) \leq \delta_1$, there is an open, relatively convex set $F\subset \R^2 \setminus {\Omega}$ with $|F|=\eta$
 whose boundary coincides with $\Si$ on a connected subset of $\Si$ of positive length and whose interior contact angles are at most $\pi/2$
  such that
\begin{equation}\label{eqn: reduction bound}
    \delta_\eta(F) + |E\Delta F|^2 \leq C_1\,  \delta_\eta(E)\,.
\end{equation}
Moreover, if $\partial E \setminus \Omega$ is a rectifiable curve, then 
\begin{equation}\label{eqn: reduction bound 2}
    d_H( \partial E , \partial F)^2 \leq\,  C_1\, \delta_\eta(E)\,.
\end{equation}
\end{proposition}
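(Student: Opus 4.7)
I would construct $F$ from $E$ via three successive geometric reductions, each strictly simplifying the structure of the set while paying a controlled price in the deficit. The overarching idea is that every operation either keeps perimeter fixed or decreases it (by a convexity argument), so the isoperimetric profile bound \eqref{eqn: ftc profile} converts each area change into a controlled deficit change.

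\textbf{Step 1 (component selection).} Apply the Ambrosio--Caselles--Masnou--Morel structure theorem \cite{AmbrosioJEMS} to decompose $E$ into at most countably many pairwise disjoint indecomposable components $\{E^i\}$, each bounded by a rectifiable Jordan curve. Let $E_1$ be the component of largest area whose relative boundary meets $\Sigma$ on a set of positive $\mathcal H^1$-measure, and discard the remainder $E' = E\setminus E_1$. Each $E^i \subset E'$ satisfies either the Euclidean isoperimetric inequality $P(E^i)\geq 2\sqrt{\pi}|E^i|^{1/2}$ (if $E^i$ does not touch $\Sigma$) or the relative one $P(E^i; \R^2\setminus\Omega)\geq I_\Omega(|E^i|)\geq \sqrt{2\pi}|E^i|^{1/2}$ (from \eqref{est:length lower bound} and \eqref{eqn: bounds on profile}). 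Summing and using additivity of perimeter across indecomposable components together with \eqref{eqn: ftc profile}, I obtain $|E\setminus E_1|\leq C\,\delta_\eta(E)^2$ and $\delta_{|E_1|}(E_1)\leq C\,\delta_\eta(E)$; the corresponding Hausdorff bound follows from $\mathrm{diam}(E^i)\leq P(E^i)/2\leq C\,\delta_\eta(E)^{1/2}$ for each discarded component.

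\textbf{Step 2 (convexification).} Set $F_0 := \mathrm{conv}(E_1\cup \Omega)\setminus \mathrm{int}(\Omega)$. Convexity of $\Omega$ ensures $F_0$ is relatively convex with $\partial F_0 \cap \Sigma$ a single arc of positive length, and the classical inequality $P(\mathrm{conv}(A))\leq P(A)$ yields $P(F_0; \R^2\setminus\Omega)\leq P(E_1; \R^2\setminus\Omega)$. The quantitative crux of this step is the \emph{linear} area bound $|F_0\setminus E_1|\leq C\,\delta_\eta(E)$: decomposing $F_0\setminus E_1$ into the ``concavities'' of $\partial E_1\setminus\Sigma$ and bounding each concavity's area by its chord length times its perimeter excess (arc length minus chord length), I get $|F_0\setminus E_1|\leq \mathrm{diam}(E_1)\cdot\bigl(P(E_1;\R^2\setminus\Omega)-P(F_0;\R^2\setminus\Omega)\bigr)\leq C\,\delta_{|E_1|}(E_1)\leq C\,\delta_\eta(E)$. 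A per-concavity variant of the same estimate controls the Hausdorff distance between $\partial E_1\setminus\Sigma$ and $\partial F_0\setminus\Sigma$ by $C\,\delta_\eta(E)^{1/2}$. Combined with Step 1, this also yields $\delta_{|F_0|}(F_0)\leq C\,\delta_\eta(E)$ and $\bigl||F_0|-\eta\bigr|\leq C\,\delta_\eta(E)$.

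\textbf{Step 3 (area and contact-angle normalization).} If $|F_0|>\eta$, I cut $F_0$ by a straight line to produce a smaller relatively convex set of area exactly $\eta$; convex cuts strictly decrease perimeter, so $\delta_\eta(F_1)\leq\delta_{|F_0|}(F_0) + [I_\Omega(|F_0|)-I_\Omega(\eta)]\leq C\,\delta_\eta(E)$ by \eqref{eqn: ftc profile} and the area bound from Step 2. If $|F_0|<\eta$, I instead enlarge by dilating the generating convex set $\mathrm{conv}(E_1\cup\Omega)$ about a point in $\Omega$; since the required dilation factor is $1+O(\delta_\eta(E)^2)$, the perimeter change is negligible. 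Finally, if a contact angle of the resulting set exceeds $\pi/2$, I truncate by a line through the corresponding endpoint perpendicular to $\tau_\Sigma$---this strictly decreases perimeter and area while bringing the angle down to $\pi/2$---and follow with a small area adjustment to restore $|F|=\eta$. Each operation respects $\delta_\eta(F)\leq C\,\delta_\eta(E)$ and $|E\Delta F|\leq C\,\delta_\eta(E)^{1/2}$, proving \eqref{eqn: reduction bound}, while tracking boundary displacements (all of size at most $C\,\delta_\eta(E)^{1/2}$ by the preceding estimates) gives \eqref{eqn: reduction bound 2} whenever $\partial E\setminus\Omega$ is rectifiable.

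\textbf{Main obstacle.} The principal analytic challenge is the linear (not merely square-root) bound $|F_0\setminus E_1|\leq C\,\delta_\eta(E)$ in Step 2: without this, the area discrepancy $|F_0|-\eta$ would enter the Step 3 estimate through $I_\Omega$ as $\sqrt{\delta_\eta(E)}$, destroying the sharp rate in \eqref{eqn: reduction bound}. The chord-versus-arc argument sketched above is quantitative and compactness-free, but care is needed to treat a potentially infinite family of concavities along the rectifiable curve $\partial E_1\setminus\Sigma$ and to keep the diameter factor uniformly bounded using only $P(E_1;\R^2\setminus\Omega)\leq \IP(\eta)+\delta_1$.
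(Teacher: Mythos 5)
Your overall three-step architecture—component selection via \cite{AmbrosioJEMS}, convexification, area/angle normalization—matches the paper's route through Lemmas~\ref{lem: red 1 simply connected} and~\ref{lem:handRed1}. However, the ``main obstacle'' you identify is real and the argument you offer to resolve it does not work.

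The claimed per-concavity estimate (area of a concavity bounded by chord length times perimeter excess) is false. Consider a circular segment of radius $r$ subtending a small angle $2\theta$: the chord has length $\ell \approx 2r\theta$, the perimeter excess is $L-\ell \approx r\theta^3/3$, but the enclosed area is $\approx 2r^2\theta^3/3 \approx 2r(L-\ell)$, which exceeds $\ell\cdot(L-\ell) \approx 2r^2\theta^4/3$ by a factor of order $\theta^{-1}$. The same failure appears for a shallow dip of depth $h$ and width $w$ in a line segment: area $\sim wh$ versus $\ell\cdot(L-\ell)\sim h^2$, a ratio of order $w/h \to \infty$. So your chain $|F_0\setminus E_1|\leq \mathrm{diam}(E_1)\cdot(P(E_1)-P(F_0))$ does not follow, and summing over concavities cannot rescue it.

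The paper obtains the linear bound $|F\setminus E|\leq C\,\delta_\eta(E)$ by a cleaner, global argument that does not look at the local geometry of the concavities at all: since the convexification $F$ contains $E$ and has smaller relative perimeter, $\IP(\eta+\e)\leq \IP(|F|)\leq P(F;\R^2\setminus\Omega)\leq P(E;\R^2\setminus\Omega)=\IP(\eta)+\delta_\eta(E)$, where $\e=|F|-|E|$; the Lipschitz \emph{lower} bound $\IP(\eta+\e)-\IP(\eta)\geq (\pi/2(\eta+\e))^{1/2}\,\e$ from \eqref{eqn: ftc profile} then gives $\e\leq C\delta_\eta(E)$ directly, once one first establishes a crude a priori bound on $\e$ (which the paper does via \eqref{eqn: bounds on profile} applied to $F\setminus E$). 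This is precisely the observation you should be using in Step~2 in place of the chord-versus-arc estimate. Separately, the paper secures the contact angle $\leq\pi/2$ \emph{before} convexifying, by first replacing the boundary curve near its endpoints with segments emanating along the normal rays $n_{x_0},n_{x_1}$ to $\Si$ (Step~1 of Lemma~\ref{lem:handRed1}), so that the subsequent convex hull automatically inherits the angle bound; your proposed post-hoc truncation by lines is more awkward because it re-perturbs the area, forcing an iteration you have not controlled.

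Your discarded-components estimate in Step~1 and the Hausdorff tracking via straight-line replacement (analogous to Lemma~\ref{lem: basic hausdorff dist est}) are in the right spirit and broadly consistent with the paper.
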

\Cref{prop: summary reduction} is shown in \Cref{ssec: proof of summary reduction}. Once we have \Cref{lem:distances} and  \Cref{prop: summary reduction},  \Cref{thm: stability} follows in a straightforward manner:
\begin{proof}[Proof of Theorem~\ref{thm: stability}] 
Let $\bar\delta = \min \{\delta_0/C_1, \delta_1\}$ where $\delta_0$ is from \Cref{lem:distances} and  $\delta_1$ and $C_1$ are from \Cref{prop: summary reduction}. 
Since $|E\Delta E_*| \leq 2\eta$ for any $E_* \in \setmins$, the first statement \eqref{eqn: stability L1 theorem statement} in \Cref{thm: stability} holds trivially when  $\delta_\eta(E) > \bar\delta$ by choosing $c \leq \bar{\delta}/(4\eta^2).$ 

The second statement \eqref{eqn: stability thm statement hausdorff} also holds trivially when  $\delta_\eta(E) > \bar\delta$, after the following small argument. Up to a translation, assume $0 \in \Omega^\circ$. Choose $\rho_0$ such that $\Omega \subset B_{\rho_0}$ and let $\rho = \max\{\rho_0 + 2(2\eta/\pi)^{1/2}, 2\IP(\eta)\}$. From \eqref{est:rad-circ-lower}, we know any minimizer $E_* \in \setmins$ is contained in $B_{\rho}$.
Let $E$ be a set as in the second part of \Cref{thm: stability}, and first consider the case that $E \setminus  B_{2\rho}$ is nonempty. Let $R_E>2\rho$ be the smallest radius such that $E \subset B_{R_E}.$ Then $d_H(\partial E,\partial E_*)^2 \leq 4R_E^2$ for any $E_* \in \setmins$, while by the connectedness of $\partial E$, we have 
$P(E;\R^2 \setminus \Omega)^2 - \IP(\eta)^2 \geq 4(R_E-\rho)^2 - (\rho/2)^2 \geq \frac{15}{16}R_E^2\geq \half R_E^2$. Thus \eqref{eqn: stability thm statement hausdorff} holds for such a set with $c=1/8.$ Next consider the case when $E  \subset B_{2\rho}.$ In this case $d_H(\partial E, \partial E_*) \leq 4\rho$, and thus if $\delta_\eta(E) > \bar\delta$, the estimate holds by taking $c = \bar{\delta}/(16\rho^2).
$ 

We henceforth assume $\delta_\eta(E) \leq \bar{\delta}.$
Let $F$ be the set obtained by applying \Cref{prop: summary reduction} to $E$. Observe that $\delta_\eta(F) \leq C_1 \delta_\eta(E) \leq \delta_0$ thanks to \eqref{eqn: reduction bound} and the choice of $\bar\delta$, and thus $F$ satisfies the assumptions of \Cref{lem:distances}. So, letting $F_* \in \setmins$ achieve the infimum in $\inf_{E_* \in \setmins} |F\Delta E_*|^2$ 
and combining \eqref{eqn: stability for good sets} and \eqref{eqn: reduction bound} yields
\[
\inf_{E_* \in \setmins } |E\Delta E_*|^2  \leq |E\Delta F_*|^2 
\le 2 |F\Delta F_*|^2 + 2|E\Delta F|^2 \le 2C_0\delta_\eta(F) + 2C_1 \delta_\eta(E) \le (2C_0 C_1 + 2 C_1)\delta_\eta(E).
\]
Thus the first statement \eqref{eqn: stability L1 theorem statement} of \Cref{thm: stability} holds with $c = \min\{ \bar{\delta}/(4\eta^2), 1/( 2C_0 C_1 + 2 C_1)\}.$ When $\partial E$ is a rectifiable Jordan curve, the second statement \eqref{eqn: stability thm statement hausdorff} of \Cref{thm: stability} follows analogously using \eqref{eqn: reduction bound 2}.
\end{proof}

\subsection{Proof of \Cref{lem:distances}}\label{ssec: stability for convex}
To prove \Cref{lem:distances}, we need three preparatory  lemmas. The first lets us approximate a set $F$ as in the statement of \Cref{lem:distances} by a set bounded by $\Si$ and a  convex $C^{2,\alpha}$ curve meeting $\Si$ orthogonally.
\begin{lemma}\label{lem: mollify}   Fix $\eta \in (0,\areaC\, \kappamaxSi^{-2}]$ and $\alpha \in (0,1)$ and let $\delta_0 = \delta_0(\eta, \kappamaxSi)$ be chosen according to Lemma~\ref{lem: sat flow hp}.
 Let $F \subset \R^2 \setminus \Omega$ be an open, relatively convex set with $|F|=\eta$ and $\delta_\eta(F)\leq \delta_0$ such that $\partial F \setminus\Si$ meets $\Sigma$ with contact angles at most $\pi/2$.
Then for any $\e>0$, there is an open, relatively convex set $F_\e \subset \R^2 \setminus \Omega$ with $|F_\e|=\eta$ for which  $\partial F_\e \setminus \Si$ is given by a  convex curve of class  $C^{2,\alpha}$ which meets $\Sigma$ orthogonally at the endpoints and which is so that
\[
d_H(\partial F , \partial F_\e) + |F\Delta F_\e| + |\delta_\eta(F) - \delta_\eta({F_\e}) | \leq \e.
\]
\end{lemma}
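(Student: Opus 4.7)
The plan is to carry out three successive modifications of $F$: first a mollification that yields a convex boundary of class $C^{2,\alpha}$; second a local surgery near each contact point turning the interior contact angles into exactly $\pi/2$; and third a small normal perturbation to restore the area constraint. All three modifications will be arranged so as to be arbitrarily small in the $C^0$ distance on the constant-speed parametrization of the relative boundary, which simultaneously controls the Hausdorff distance $d_H(\partial F, \partial F_\varepsilon)$ and the symmetric difference $|F \Delta F_\varepsilon|$. The change in perimeter (and hence in $\delta_\eta$) will be bounded using a first-variation argument along the family of deformations, so that $C^1$ closeness of the parametrizations, which comes for free from the convexity and curvature control established along the way, suffices to control $|\delta_\eta(F)-\delta_\eta(F_\varepsilon)|$.

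For the first step I would write $F = K \cap (\R^2 \setminus \Om)$ for an open convex $K \subset \R^2$ and replace $K$ by the convex body $K_\rho := (K \ominus B_\rho) \oplus B_\rho$ with $\rho > 0$ small, which is convex with $C^{1,1}$ boundary and satisfies $d_H(\partial K, \partial K_\rho) \le \rho$. A second convolution at a finer scale, carried out for instance on the support function $h_K$ of $K_\rho$ or equivalently on the curvature viewed as a function of arclength, upgrades the regularity to $C^{2,\alpha}$ while preserving convexity and keeping the Hausdorff perturbation arbitrarily small. The relative boundary after this step is then a convex $C^{2,\alpha}$ curve meeting $\Si$ at two points with interior contact angles $\alpha_0', \alpha_1' \in (0, \pi/2]$ close to the original $\alpha_0,\alpha_1$.

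For the second step, near each endpoint I would splice in a short convex $C^{2,\alpha}$ arc that departs from $\Si$ orthogonally from the outside of $\Om$ and is glued to the bulk curve with $C^{2,\alpha}$ matching. The assumption $\alpha_i \le \pi/2$ is crucial here, as it ensures that the orthogonal splice lies on the correct side of $\Si$ and that the tangent angle along the combined curve remains monotone, preserving convexity. The splice length can be taken as small as desired, introducing only an $O(\ell^2)$ change in enclosed area, where $\ell$ is the splicing length, and an arbitrarily small change in length.

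Finally, the resulting set has area $\eta' = \eta + o(1)$, where $o(1)$ tends to zero as the previous scales are sent to zero. To restore $|F_\varepsilon| = \eta$ exactly I would introduce a one-parameter family of normal perturbations of the relative boundary, generated by a fixed smooth nonnegative scalar profile that vanishes to second order at the two (now orthogonal) endpoints so as to preserve both orthogonality and $C^{2,\alpha}$ regularity. Since the area derivative along this family equals the integral of the profile against the unit normal and is strictly positive, the implicit function theorem selects the correct parameter value and produces $F_\varepsilon$. The main obstacle will be organizing the three small scales (mollification radius $\rho$, splicing length $\ell$, and area-correction parameter) so that each subsequent step does not destroy the gains of the previous one, and so that the cumulative $C^1$ change in the parametrization of $\partial F_\varepsilon \setminus \Si$ is at most $\varepsilon$; the perimeter estimate then follows from a direct Taylor expansion of the length functional and the bound $\delta_\eta \le L(\cdot) - \IP(\eta)$.
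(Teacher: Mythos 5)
Your proposal is correct in outline but takes a genuinely different route from the paper's proof. The paper never smooths the convex body globally: it first replaces $\partial F\setminus\Si$ by a \emph{polygonal} convex curve whose vertices lie on the original curve, and the two terminal segments are produced by nearest-point projection of the second and second-to-last vertices onto $\Si$, so that the polygonal curve meets $\Si$ \emph{exactly orthogonally} from the outset. It then smooths only the interior corners with explicit cubic B\'ezier arcs (which do not touch the terminal segments), and finally restores the area by pushing the interior part of the curve outward along its normal by a constant distance $\rho$, re-projecting the endpoints onto $\Si$. Because the intermediate sets are always nested inside $F$, the area-correction parameter $\rho$ is monotone and one-sided, and the proof that the pushed-out curve stays relatively convex and outside $\Om$ uses $\delta_\eta(F)\le\delta_0$ via Lemma~\ref{lem: sat flow hp}. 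Your approach instead smooths the ambient convex body $K$ (Minkowski opening followed by support-function mollification), which wins you $C^{2,\alpha}$ regularity in one stroke but \emph{destroys} the contact geometry with $\Si$, which you then repair by a $C^{2,\alpha}$ orthogonal splice near each endpoint. That splice is precisely the step the paper's construction is designed to avoid: making an explicit short convex $C^{2,\alpha}$ arc that departs orthogonally from $\Si$ and glues to the bulk curve with matching position, tangent, and curvature is not hard in principle, but it is the most delicate part of your plan and you would need to write down the gluing data. You should also note that after global mollification the new contact angles $\alpha_i'$ are only \emph{close} to $\alpha_i$ and could exceed $\pi/2$ if $\alpha_i=\pi/2$, which would break the convexity-of-splice argument as stated; a preliminary inward shrink of $K$ (or an observation that the opening cannot increase the contact angle) would fix this, but it needs to be said. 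Finally, your area correction via a fixed bump profile and the implicit function theorem is fine, but you should observe explicitly that since the profile vanishes at the endpoints and the curve is at positive distance from $\Om$ away from the endpoints, a small perturbation in either direction keeps the curve in $\R^2\setminus\Om$; the paper sidesteps this by always pushing outward from a subset of $F$. Both approaches ultimately work; the paper's is more elementary and self-contained, yours leans on standard convex-body smoothing machinery at the cost of a nontrivial repair step at the free boundary.
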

The proof of \Cref{lem: mollify} is postponed to  \Cref{sec: technical appendix}. The next lemma will let us estimate the symmetric difference between 
the regions enclosed by curves $\gamma_t$ at different times along the flow.
\begin{lemma}\label{lem: L1 displacement}
Fix $\bar{L}>0$ and let $\ga_t \in \B$, $t \in [t_1, t_2]$, be a smooth family of embedded curves of class $C^{2}$ with $L(\g_t) \leq \bar{L}$ such that $\g_t$ meets $\Si$ orthogonally at its endpoints. Let $E_t$ be the open bounded set bounded by $\ga_t$ and the  sub-arc $\si_{\gamma_t}$ of $\Si$ described in Definition \ref{def: sigma_gamma} and let $V_t = \partial_t \ga_t \cdot \nu_{\g_t}$ denote the normal velocity. Then
\[
|E_{t_2} \Delta E_{t_1}| \leq \bar{L}^\half \int_{t_1}^{t_2} \|V_t\|_{L^2(ds_{\gamma_t})} \, dt.
\]
\end{lemma}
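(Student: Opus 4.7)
The plan is to apply the area formula to the parametrization map
$$\Phi: [t_1,t_2]\times [a_1,a_2] \to \R^2,\qquad \Phi(t,p):=\gamma_t(p),$$
exploiting the fact that any point $x$ in the symmetric difference $E_{t_2}\Delta E_{t_1}$ lying off of $\Sigma$ must be swept by the moving curve at some intermediate time.

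First, I would argue that, up to a Lebesgue null set lying on $\Sigma$,
$$E_{t_2}\Delta E_{t_1} \subset \Phi\bigl([t_1,t_2]\times [a_1,a_2]\bigr).$$
Since the family $\gamma_t$ is smooth and meets $\Sigma$ transversally by the orthogonality assumption, Remark~\ref{rmk:Asi-continuous} ensures that the closing sub-arcs $\sigma_{\gamma_t}$ vary continuously, and hence so do the sets $E_t$ in the Hausdorff distance. Consequently, for any $x \in E_{t_2}\Delta E_{t_1}$ with $x \notin \Sigma$, the function $t\mapsto \chi_{E_t}(x)$ must change value at some $t^*\in[t_1,t_2]$, forcing $x\in\gamma_{t^*}$ at that time.

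Second, I would apply the area formula to $\Phi$:
$$|E_{t_2}\Delta E_{t_1}| \leq \bigl|\Phi([t_1,t_2]\times[a_1,a_2])\bigr| \leq \int_{t_1}^{t_2}\int_{a_1}^{a_2} |J\Phi(t,p)|\, dp\, dt.$$
Decomposing $\partial_t\gamma_t$ into components along and orthogonal to the tangent direction $\partial_p\gamma_t$, the tangential piece contributes nothing to the determinant, and one computes
$$|J\Phi(t,p)| = \bigl|\det\bigl(\partial_t\gamma_t(p),\,\partial_p\gamma_t(p)\bigr)\bigr| = |V_t(p)|\cdot|\partial_p\gamma_t(p)|,$$
so the inner integral equals $\int_{\gamma_t}|V_t|\,ds_{\gamma_t}$. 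The Cauchy--Schwarz inequality then gives $\int_{\gamma_t}|V_t|\,ds_{\gamma_t}\leq L(\gamma_t)^{1/2}\|V_t\|_{L^2(ds_{\gamma_t})} \leq \bar L^{1/2}\|V_t\|_{L^2(ds_{\gamma_t})}$, and integrating in $t$ yields the claimed estimate.

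The main obstacle is the first step: rigorously justifying that $E_{t_2}\Delta E_{t_1}$ is covered (modulo a null set) by the image of $\Phi$, even though the closing sub-arc $\sigma_{\gamma_t}$ itself slides along $\Sigma$ as $t$ varies. The key observation that makes this routine is that $\Sigma$ has two-dimensional Lebesgue measure zero, so all changes of $E_t$ that come from the motion of $\sigma_{\gamma_t}$ along $\Sigma$ are invisible to $|\cdot|$; only the genuine sweeping carried out by $\gamma_t$ in the interior of $\R^2\setminus\Omega$ contributes to the symmetric difference, and this is precisely what $\Phi$ parametrizes.
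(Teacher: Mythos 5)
Your proof is correct, and it takes a genuinely different route from the paper. The paper argues infinitesimally: fixing $t_0$, it shows $\tfrac{d}{d\varepsilon}\big|_{\varepsilon=0}|E_{t_0+\varepsilon}\Delta E_{t_0}| \leq \int_{\gamma_{t_0}}|V_{t_0}|\,ds$ by comparing $E_{t_0+\varepsilon}\Delta E_{t_0}$ with the instantaneous swept region $S_\varepsilon$ and applying the area formula to the tubular-neighborhood map $\Psi(p,q)=\gamma_{t_0}(p)+q\,\nu_{t_0}(p)$; it then integrates the rate in $t_0$. You instead run a single global area-formula estimate on the spacetime map $\Phi(t,p)=\gamma_t(p)$, together with the covering observation that any point of $(E_{t_2}\Delta E_{t_1})\setminus\Sigma$ must be crossed by some $\gamma_{t^*}$. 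The Jacobian computation $|J\Phi|=|V_t|\,|\partial_p\gamma_t|$ is correct because the tangential part of $\partial_t\gamma_t$ is killed by the determinant, which is exactly the role played in the paper by the remark that tangential motion does not move the enclosed region. Your approach buys a one-shot inequality and avoids the $|R_\varepsilon\Delta S_\varepsilon|=o(\varepsilon)$ step; what it costs is that the covering claim --- which you label routine --- does need a short topological argument (for instance: the Jordan curves $\gamma_t+\sigma_{\gamma_t}$ vary continuously by \Cref{rmk:Asi-continuous}, so the winding number around a fixed $x\notin\bigcup_t\partial E_t$ is locally constant in $t$; hence if $x\in(E_{t_1}\Delta E_{t_2})\setminus\Sigma$ there is some $t^*$ with $x\in\partial E_{t^*}=\gamma_{t^*}\cup\sigma_{\gamma_{t^*}}$, and $x\notin\Sigma$ forces $x\in\gamma_{t^*}$). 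With that spelled out, your argument is complete and matches the paper's level of rigor.
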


\begin{proof}
Fix $t_0 \in (t_1, t_2)$. Since the curves $\g_{t_0 + \e}$ and $\si_{\gamma_{t_0+\eps}}$ vary smoothly in $\e$, so does the area of the region $R_\e = E_{t_0 + \e} \Delta E_{t_0}.$ We claim that 
\begin{equation}\label{eqn: deriv bound}
    \frac{d}{d\e}\bigg|_{\e=0} |R_\e| \leq \int_{\ga_{t_0}} |V_{t_0}| \,ds_{\g_{t_0}} .
\end{equation}
Indeed, let $L= L(\ga_{t_0})$ and reparametrize each $\ga_t$ by constant speed on $[0,L]$. Since 
\[
\gamma_{t_0 + \varepsilon}(p) = \gamma_{t_0}(p) + \varepsilon \, \partial_t \gamma_t(p)\big|_{t = t_0} + o(\varepsilon)
\]
for $p\in [0, L]$, and since tangential motion does not affect the region enclosed by the curve, we have $|R_\e \Delta S_\e| = o(\e)$ 
where 
\[
S_\varepsilon := \left\{ \gamma_{t_0}(p) + q \nu_{t_0}(p) : p \in [0,L],\ q \in ( -\e V^-(p),\e  V^+(p))\right\}. 
\]
Here $V^\pm(p) := \max(\pm V_{t_0}(p), 0)$, so the interval takes the form $
[0,\e V_{t_0}(p) ] $ or $[\e V_{t_0}(p),0 ]$ .
Since $\gamma_{t_0}$ is smooth and embedded, there exists $\e_0 > 0$ such that the map $\Psi(p,q) = \gamma_{t_0}(p) + q \nu_{t_0}(p)$ is a diffeomorphism from $[0,L] \times (-\e_0, \e_0)$ onto its image.
 By the area formula, 
\[
|S_\varepsilon| = \int_0^L \int_{-\e V^-(p)}^{\e V^+(p)} \det D\Psi (p,q) \, dq \, dp  = \varepsilon \int_0^L |V_{t_0}(p)| \, dp + o(\varepsilon).
\]
In the second identity we use the fact that the Jacobian $\det D\Psi (p,q) = 1 - q \kappa_{\ga_{t_0}}(p)$ satisfies $\det D\Psi (p,q) = 1 + O(q)$.
Dividing by $\e$ and passing $\e\to 0$ establishes \eqref{eqn: deriv bound}.

 Since \eqref{eqn: deriv bound} holds for every $t_0 \in [t_1, t_2]$, integrating with respect to $t_0$ shows that 
\[
|E_{t_2} \Delta E_{t_1}| \leq \int_{t_1}^{t_2} \| V_t\|_{L^1(ds_{\gamma_t})} \, dt.
\]
The lemma then follows from H\"{o}lder's inequality and the assumed bound $L(\g_t) \leq \bar{L}.$
\end{proof}

The final preparatory lemma lets us upgrade from $L^2$ control to $H^1$ control of the displacement along the flow. The lemma holds in greater generality than stated, but we only state and prove it in the setting where it is applied.
\begin{lemma}\label{lem: cacciopoli} 
Fix $\eta\in (0, \areaC \kappamaxSi^{-2}]$ and let $\delta_0 =\delta_0(\eta, \kappamaxSi)$ be chosen according to \Cref{lem: sat flow hp}.
There exists a constant $C = C(\eta, \Si)$ such that the following holds.
Let $\gamma \in \mathcal{B}_\eta$ be an embedded curve with $L(\gamma)\leq \IP(\eta) + \delta_0$  meeting $\Si$ orthogonally and bounding an open, relatively convex set in $\R^2 \setminus \Omega$. Let $\gamma_* \in\Crit$ be the relative boundary $\partial E_*\setminus \Om$ for some $E_* \in\setmins$, and suppose both curves are parametrized by constant speed on $[0,1]$
and oriented so that the normal to the curve coincides with the inward unit normal of the bounded set. Then 
\begin{equation}
    \label{eqn: upgrade}
\int_0^1 |\gamma'(p) - \gamma_*'(p)|^2\,dp \leq C \left( L(\gamma) - L(\gamma_*) \right) + C \int_0^1 |\gamma(p) - \gamma_*(p)|^2\,dp.
\end{equation}
\end{lemma}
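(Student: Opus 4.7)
The plan is to derive an algebraic identity for $\int_0^1 |v'|^2\,dp$, where $v:=\gamma-\gamma_*$, from the constant-speed constraint, and then use integration by parts, the orthogonal-intersection condition, and the area constraint to extract the claimed bound. Concretely, expanding $|\gamma'|^2 = |\gamma_*'+v'|^2 = L(\gamma)^2$ and using $|\gamma_*'|^2 = L(\gamma_*)^2$ yields the pointwise identity $2 L(\gamma_*) \langle \tau_{\gamma_*}, v'\rangle + |v'|^2 = L(\gamma)^2 - L(\gamma_*)^2$. Integrating and rearranging,
\begin{equation*}
\int_0^1 |v'|^2\,dp \;=\; L(\gamma)^2 - L(\gamma_*)^2 \;-\; 2 L(\gamma_*) \int_0^1 \langle \tau_{\gamma_*}, v'\rangle\,dp.
\end{equation*}
I would then integrate by parts: since $\gamma_*$ has constant curvature $\kappa_{\gamma_*}$, $\tau_{\gamma_*}' = L(\gamma_*)\kappa_{\gamma_*}\,\nu_{\gamma_*}$, so
\begin{equation*}
\int_0^1 \langle \tau_{\gamma_*}, v'\rangle\,dp \;=\; \bigl[\langle \tau_{\gamma_*}, v\rangle\bigr]_0^1 \;-\; L(\gamma_*)\kappa_{\gamma_*} \int_0^1 \langle \nu_{\gamma_*}, v\rangle\,dp.
\end{equation*}

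Next, I would estimate each of the three resulting pieces. First, $L(\gamma)^2 - L(\gamma_*)^2 = (L(\gamma)-L(\gamma_*))(L(\gamma)+L(\gamma_*)) \leq C(L(\gamma)-L(\gamma_*))$ by the hypothesis $L(\gamma)\le L(\gamma_*)+\delta_0$. Second, at each endpoint $p\in\{0,1\}$, $\tau_{\gamma_*}$ is perpendicular to $\Sigma$ while $v$ is the chord between two points on $\Sigma$; parametrizing $\Sigma$ by arclength near $\gamma_*(p)$ and using $C^2$ regularity of $\Sigma$ shows this chord is tangential to $\Sigma$ to leading order with a normal component of size $O(|v(p)|^2)$, giving $|[\langle \tau_{\gamma_*}, v\rangle]_0^1| \leq C(|v(0)|^2+|v(1)|^2)$. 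Third, the constraint $A_\Sigma(\gamma)=A_\Sigma(\gamma_*)=\eta$ forces the signed area between the two curves to vanish; foliating a tubular neighborhood of $\gamma_*$ by its normal rays and carefully accounting for the small cap regions where $\sigma_\gamma$ and $\sigma_{\gamma_*}$ differ (whose areas are $O(|v(0)|^2+|v(1)|^2)$ by the $C^2$ regularity of $\Sigma$), one obtains
\begin{equation*}
\Bigl| L(\gamma_*)\!\int_0^1\! \langle \nu_{\gamma_*}, v\rangle\,dp \Bigr| \;\leq\; C\bigl(\|v\|_{L^\infty([0,1])}^2 + |v(0)|^2 + |v(1)|^2\bigr).
\end{equation*}

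Combining these three bounds with the identity above yields
\begin{equation*}
\int_0^1 |v'|^2\,dp \;\leq\; C\bigl(L(\gamma)-L(\gamma_*)\bigr) + C\bigl(|v(0)|^2+|v(1)|^2+\|v\|_{L^\infty([0,1])}^2\bigr),
\end{equation*}
and finally the trace and Sobolev embeddings on $[0,1]$, combined with Young's inequality, give $|v(0)|^2+|v(1)|^2+\|v\|_{L^\infty([0,1])}^2 \leq C_\e \|v\|_{L^2([0,1])}^2 + \e \|v'\|_{L^2([0,1])}^2$ for any $\e>0$. Choosing $\e$ small enough to absorb the last term into the left-hand side completes the proof. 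The main obstacle is the area step (third item above): because $\gamma$ and $\gamma_*$ have distinct endpoints on $\Sigma$, $v$ is not a compactly supported normal variation at a fixed endpoint configuration, so the first-variation-vanishes formula does not apply verbatim. The delicate ingredient is bookkeeping the cap contributions between $\sigma_\gamma$ and $\sigma_{\gamma_*}$ and showing they are quadratic in the endpoint displacements. A preliminary dichotomy may also be needed to reduce to the regime where $\|v\|_{L^\infty}$ is small, using the crude bound $\|v'\|_{L^2}^2 \leq (L(\gamma)+L(\gamma_*))^2$ to handle the case when $\|v\|_{L^2}^2$ is already bounded below by an absolute constant.
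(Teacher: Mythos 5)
Your proposal follows the same high-level architecture as the paper's proof: expand $L(\gamma)^2$ using the constant-speed constraint to get a pointwise identity for $|v'|^2$; integrate by parts via the Frenet equations; control the boundary terms using the orthogonal-intersection condition (the chord between nearby points of $\Sigma$ is tangential to leading order, so its pairing with $\tau_{\gamma_*}\perp\Sigma$ is quadratic); use the area constraint to control $\int\langle\nu_{\gamma_*},v\rangle$; and absorb via Sobolev plus Young's. The boundary-term argument and the absorption step match the paper closely.

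The gap is in the area step, where the specific bound you assert is not justified. Note that $L(\gamma_*)\int_0^1\langle\nu_{\gamma_*},v\rangle\,dp = \int_0^1 w\cdot J\gamma_*'\,dp$, and expanding $\int\gamma\cdot J\gamma' - \int\gamma_*\cdot J\gamma_*'$ directly (as the paper does) produces the identity $\int w\cdot Jw' + 2\int w\cdot J\gamma_*' + (\gamma_*\cdot Jw)|_0^1$. The "extra" term $\int w\cdot Jw'$ is \emph{not} controllable by $\|v\|_{L^\infty}^2$: for $w(p)=\e(\cos Np,\sin Np)$ one has $\int w\cdot Jw'=-\e^2 N$, so the correct bound is $\|w\|_{L^2}\|w'\|_{L^2}$, which involves the quantity being estimated and must be absorbed by Young's. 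Your geometric foliation argument faces the same obstruction from a different angle: $\langle\nu_{\gamma_*}(p),v(p)\rangle$ is not the signed normal height $h(p)$ of $\gamma$ over $\gamma_*(p)$, because $\gamma$ is in constant-speed, not normal-graph, parametrization; the reparametrization discrepancy is not a priori $O(\|v\|_{L^\infty}^2)$ (it produces cross terms of the type $\|v\|_{L^\infty}\|v'\|_{L^2}$). There is a second omission: the term $(\gamma_*\cdot Jw)|_0^1$ appearing in the expansion is first order in a generic coordinate system; the paper renders it quadratic by placing the center of the circle containing $\gamma_*$ at the origin, which forces $\gamma_*(i)\perp\tau_\Sigma(\gamma_*(i))$. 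Your coordinate-free foliation implicitly absorbs this, but that is precisely the cap bookkeeping you flagged as delicate and did not carry out. Both issues are fixable by switching to the paper's algebraic expansion (producing $\|w\|_{L^2}\|w'\|_{L^2}$ rather than $\|v\|_{L^\infty}^2$ in the area step) and by fixing the coordinate normalization; the preliminary dichotomy you mention corresponds to the paper's separate treatment of the case where $\sigma_\gamma$ and $\sigma_{\gamma_*}$ are disjoint.
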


\begin{proof}
Let  $w(p) := \gamma(p) - \gamma_*(p)$. First, let us see the information we get from the fact that $\ga$ and $\ga_*$ both enclose area $\eta$.
Recall from \eqref{eqn: def area} that $A_\Sigma(\gamma) =- \frac{1}{2} \int_0^1 \gamma\cdot  J \gamma'\,dp - \frac{1}{2} \int_{\si_\g} \si_\g \cdot J\si_\g' \, ds_{\si_\g}$ where $\si_\g$ is as in Definition~\ref{def: sigma_gamma}.  
Since $E$ is relatively convex by assumption, $\si_\g$ coincides with the projection of $-\g$ onto $\Si$. The analogue holds for the subarc $\si_{*}= \si_{\g_*}$.

If the traces of $\si_\g$ and $\si_*$ do not intersect, then $\g$ and $\g_*$ lie in two separate half spaces. Denote by $l_0$ a line dividing them and $\Pi_{l_0}=\Pi_0$ the projection onto the line $l_0$. Then $\int_0^1 |\g-\g_*|^2 \geq \int_0^1 |\g_*-\Pi_0(\ga_*)|^2 = \frac{1}{\IP(\eta)}\int |\g_*-\Pi_0(\ga_*)|^2 \,ds_{\g_*}$, 
which is bounded below by the corresponding quantity for a semi-circle of the same radius and its diameter. Keeping in mind  \eqref{eqn: bounds on profile} and  $\pi r^2 \ge \eta$, compare \eqref{est:rad-circ-lower},  we hence get 
 $ \int_0^1 |\g-\g_*|^2 \geq \frac{1}{\IP(\eta)} r^2\int_0^\pi \sin^2\geq \frac{1}{2\pi^{1/2}\eta^{1/2}}\frac{\eta}{\pi} \frac{\pi}{2}=\eta^{1/2}/(4\pi^{1/2})$.
Since the left-hand side of \eqref{eqn: upgrade} is bounded above by $4(\IP(\eta)+\delta_0)^2$, the estimate \eqref{eqn: upgrade} holds trivially in this case.

We can thus assume the traces of $\si_\g$ and $\si_*$ intersect nontrivially. 
{To prove the claim in this main case, it is now convenient to fix the coordinate system so that the center $z_{\g_*}$ of the circular arc $\g_*$ is at the origin. This ensures that $\gamma_*(i)$ is normal to $\gamma_*'(i)$, $i=0,1$. As $\gamma_*$ intersects $\Si$ orthogonally, this hence allows us to use that $\gamma_*(i)$ is normal to $\nu_\Si(\gamma_*(i))$ in the following proof.
As a first step we show that 
\beq \label{est:I-claim-new}
I:=\Big|    \int_{\si_\g} \si_g \cdot J \si_g'\,d s_{\si_{\g}}
    - \int_{\si_*} \si_* \cdot J \si_*'\, ds_{\si_*}\Big|
\leq C (\abs{w(0)}^2+\abs{w(1)}^2)
\eeq
}
To see this we first note that the integrands $ \si_\g \cdot J \si_\g'$ and $ \si_* \cdot J \si_*'$ coincide on the intersection of their traces. The symmetric difference between their traces is parametrized by two sub-arcs $\si_0$ and $\si_1$ of $\Si$ such that the endpoints of $\si_0$ are $\g(0)$ and $\g_*(0)$, and the endpoints of $\si_1$ are $\g(1)$ and $\g_*(1)$. Moreover, since $\si_\g\cap \si_*\neq \emptyset$, we can bound 
since $L(\si_0) \leq \IP(\eta) + \delta_0< d_\Si/2$ 
by Remark~\ref{rmk: d sigma bound}. Hence 
there is an explicit constant $C$ depending on $\Si$ such that $L(\si_0) \leq C |\g(0) - \g_*(0)|= C|w(0)|$. Similarly, $L(\si_1) \leq C |w(1)|.$ 
The above quantity $I$ can hence be bounded by 
\beq \label{est:I-new}
I\leq 
\Big|    \int_{\si_0} \si_0 \cdot J \si_0'\,d s_{\si_{0}}\Big| 
    +\Big|    \int_{\si_1} \si_1 \cdot J \si_1'\,d s_{\si_{1}}\Big|  \leq L(\si_0)\sup_{\si_0}\abs{\si_0 \cdot \nu_{\Si}(\si_0)}  +L(\si_1)) \sup_{\si_1}\abs{\si_1 \cdot \nu_{\Si}(\si_1)}
\eeq

As remarked above, our choice of coordinate system ensures that $\gamma_*(0)$ is orthogonal to $\nu_{\Si}(\gamma_0^*(0))$. The inner product $\si_0\cdot \nu_\Si(\si_0)$ hence vanishes at one of the endpoints of $\si_0$,  namely at $\gamma_*(0)$.
Since $\Si$ is a fixed $C^2$ curve, this ensures that $\sup_{\si_0}\abs{\si_0 \cdot \nu_{\Si}(\si_0)}\leq \text{osc}_{\si_0}\abs{\si_0 \cdot \nu_{\Si}(\si_0)}\leq C L(\si_0)$ for a constant $C=C(\Si)$ that only depends on an upper bound on the $C^2$ norm of (the arclength parametrisation) of $\Si$. Inserting this, and the analogue bound on $\si_1$, into \eqref{est:I-new}, immediately yields the claimed estimate \eqref{est:I-claim-new}.

To address the other term coming from the difference of areas, we add and subtract terms, the integrate by parts, and use the identity $Ja \cdot b = -a \cdot Jb$ to find
\beqa \label{est:gamma J gamma'}
\int_0^1 \g \cdot J\g' - \int_0^1 \g_* \cdot J\g_*' & = \int_0^1  w \cdot J w' + \int_0^1  w\cdot  J \gamma_*'  +\int_0^1 \gamma_* \cdot J w'  \\
 & =  \int_0^1  w \cdot J w' + \int_0^1  w\cdot  J \gamma_*'  - \int_0^1 \gamma_*' \cdot J w  + (\gamma_*\cdot Jw)\Big|_{0}^1\\
 & = \int_0^1  w \cdot J w' +  2  \int_0^1  w\cdot  J \gamma_*'  + (\gamma_*\cdot Jw)\Big|_{0}^1.
\eeqa

{ While we could of course estimate the boundary terms 
 $ \gamma_*(i)\cdot Jw(i)=\gamma_*(i)\cdot J (\gamma_*(i)-\gamma(i))$ by a multiple of $\abs{w(i)}$, $i=0,1$, this would not suffice to prove our result. Instead, we can exploit that 
 \beq 
 \label{eq:writing-w} 
 w(i)=\gamma(i)-\gamma_*(i)=\pm L(\si_i)\tau_{\Si}(\gamma_*(i))+err_i\eeq 
 (with $\pm$ chosen according to the orientation of the subarc $\si_i$ from $\gamma_*(i)$ to $\gamma(i)$), for an error term that is bounded by $\abs{err_i}\leq L(\si_i)\text{osc}_{\si_i)} \tau_{\Si}\leq C L(\si_i)^2\leq C\abs{w(i)}^2$. 
As our choice of coordinate system ensures that $\gamma_*(i)$ is orthogonal to $\nu_\Si(\gamma_*(i))=J \tau_\Si(\gamma_*(i))$, we can hence indeed bound 
 $ \abs{\gamma_*(i)\cdot Jw(i)}\leq C\abs{w(i)}^2$ 
 }

Inserting this into \eqref{est:gamma J gamma'}, rearranging this identity and applying the bounds above and  the fact that $2 (A_\Sigma(\gamma) - {A}_\Sigma(\gamma_*))= 0$ by assumption, we hence find
\begin{equation}
    \label{eqn: area constraint info-new}
  2  \Big|  \int_0^1  w\cdot  J \gamma_*'  \Big| \leq  \| w\|_{L^2}\| w'\|_{L^2}  +C(  \,|w(0)|^2 + |w(1)|^2).
\end{equation}
Now we turn to the main estimate. Letting $\ell = L(\gamma)$ and $\ell_* = L(\gamma_*)$ and using the fact that both curves are parametrized with constant speed, we have $
\ell^2 
=  \ell_*^2 + 2 \, \gamma_*'(p)\cdot  w'(p) + |w'(p)|^2.$
Rearranging and integrating over $[0,1]$ gives
\begin{equation}
    \label{eqn: rev poincare 1-new}
 \int_0^1 |w'|^2 dp =
(\ell^2 - \ell_*^2) - 2 \int_0^1 \gamma_*' \cdot w'\, dp.
\end{equation}
We integrate the second term by parts, using that $\ga_*' = \ell_* \tau_{\g_*}$ and $\gamma_*''(p) = \ell_*\kappa_* J\ga_*'(p)$ for the (constant) curvature $\kappa_*$  of $\gamma_*$:
\begin{equation}\label{eqn: rev poincare 3}
-2\int_0^1 \gamma_*'\cdot  w' \, dp =  2\int_0^1 \gamma_*'' \cdot w\, dp -2  \gamma_*'\cdot w \Big|_{0}^{1}  =    2 \ell_*\kappa_* \int_0^1 J \gamma_*' \cdot w \, dp - 2\ell_* \tau_{\gamma_*}\cdot w \Big|_{0}^{1}\,.
\end{equation}
{As above, the boundary terms can be controlled by $C(\abs{w(0)}^2+\abs{w(1)}^2)$ since $\tau_{\gamma_*}$ is normal to $\tau_\Si$ at the endpoints and since $w$ can be written as in \eqref{eq:writing-w}}
So, applying \eqref{eqn: area constraint info-new} gives us 
 \begin{equation}
   2  \Big|\int_0^1 \gamma_*'\cdot  w' \,dp \Big| \leq \ell_*{\kappa_*}\| w\|_{L^2} \| w'\|_{L^2} + C(|w^2(0)|+|w^2(1)|)
 \end{equation}
 where $C$  depends only on $\Sigma$ and $\eta$ (from \eqref{eqn: bounds on profile} we can bound $\kappa_*$ above in terms of $\eta$). We estimate the boundary terms using the Sobolev embedding (which simply follows from the fundamental theorem of calculus and H\"{o}lder's inequality):
\[
\| w\|_{C^0}^2 \leq 2 \|w\|_{L^2} \|w'\|_{L^2} + \| w\|_{L^2}^2.
\]
Substituting  these estimates in \eqref{eqn: rev poincare 1-new} and using $\ell^2 - \ell_*^2 \leq (2\IP(\eta) +\delta_0) (\ell -\ell_*)$,  we find 
\begin{align}
    \int_0^1 |w'|^2 dp \leq (2\IP(\eta) +\delta_0) (\ell -\ell_*) + C'( \|w\|_{L^2} \|w'\|_{L^2} + \| w\|_{L^2}^2)
\end{align}
for $C'=C'(\Sigma,\eta)$. Applying Young's inequality and absorbing the resulting term $\frac12\int |w'|^2$ completes the proof.
\end{proof}

We are now ready to prove \Cref{lem:distances}.
\begin{proof}[Proof of \Cref{lem:distances}]
Thanks to  Lemma~\ref{lem: mollify}, it suffices to prove the proposition when  $\partial F \setminus \Sigma$ is parametrized by an embedded convex  $C^{2,\alpha}$ curve  $\gamma\in \B_\eta$ meeting $\Sigma$ orthogonally at the endpoints.

Taking $\delta_0 = \delta_0(\eta, \kappamaxSi)>0$ as in \Cref{lem: sat flow hp}, the assumption $\delta_\eta(F) \leq \delta_0$  and \Cref{lem: sat flow hp} together ensure that $\gamma$ satisfies the hypotheses of \Cref{thm: convex data flow}. \Cref{thm: convex data flow} guarantees the existence of a global-in-time solution $\{\ga_t\}$ to the free boundary area-preserving curve shortening flow with initial data $\ga=\ga_0$ such that  $|\int\kappa ds_{\gamma_t}|\leq 2\pi$ and $\g_t \in \B_\eta$ for all $t\geq0$.

By \Cref{mainthm-flow}, there is a unique arc $c^* \in \Crit$ such that ${\ga}_t$ converges (smoothly, exponentially) to $c^*.$ 
Since $L(c^*)\leq L(\ga)$, $c^*$ is a minimizer of $L$ in $\B_\eta$ provided we choose $\delta_0 < \ell_1(\eta,\Si) - \IP(\eta)$, where $\ell_1(\eta,\Si)>\IP(\eta)$ is the lowest energy level of a non-minimizing critical point.
Let $F_* \in \setmins$ denote the set bounded by $c^*$ and $\Si$. By 
\eqref{est:dist-total} and \Cref{lem: L1 displacement} (passing $t_1\to0$ and $t_2 \to \infty)$, we find 
 \[
 |F\Delta F_*  | \leq \int_0^\infty \|\partial_t\gamma_t\|_{L^2(ds_{\gamma_t})} \, dt  \leq C \delta_\eta(F)^{1/2}\,.
 \]
Next, to bound $d_H(\partial F, \partial F_*),$ let $\tilde{\g}, \tilde{\g}_t$, and $\tilde{c}^*$ be the constant speed parametrizations of $\g$, $\g_t$, and $c^*$ on $[0,1]$. We apply the Sobolev inequality and \Cref{lem: cacciopoli} to find 
\begin{equation}\label{est:dist-L2}
\begin{split}
      \| \tilde{c}^*- \tilde{\g}\|_{C^0([0,1])}^2  & \leq  C\| (\tilde{c}^*)'- \tilde{\g}'\|_{L^2([0,1])}^2 +  
     C  \| \tilde{c}^*- \tilde{\g}\|_{L^2([0,1])}^2  \leq C\delta_\eta(F)  + C  \| \tilde{c}^*- \tilde{\gamma}\|^2_{L^2([0,1])}\,.
\end{split}
        \end{equation}
Then, by \eqref{est:dist-total} we find 
\begin{align*}
     \| \tilde{c}^*- \tilde{\gamma}\|_{L^2([0,1])} & = \left\| \int_0^\infty \partial_t\tilde{\gamma}_t \,dt\right\|_{L^2([0,1])} \leq  \int_{0}^{\infty} \|\partial_t \tilde\gamma_t\|_{L^2([0,1])} dt \leq C\delta_\eta(F)^{1/2}.
\end{align*}
This completes the proof of the proposition.
\end{proof}

\subsection{Reduction to a set bounded by a rectifiable curve}\label{ssec: rect curve}
The remainder of the paper is dedicated to proving \Cref{prop: summary reduction}. The first step is to replace $E$ with a simply connected set. To do so, we need the following quantitative sub-additivity estimate for the isoperimetric profile. 
\begin{lemma}\label{lem: quant sublinearity}
Fix $\eta>0$.    There is a positive constant $c_0 = c_0(\kappamaxSi, \eta)$ such that the following holds. Let $\{\eta_i\}_{i \in I}$ be a non-increasing finite or countable sequence of positive numbers with $\sum_{i \in I }\eta_i =\eta.$ Then
    \begin{equation}\label{eqn: quant sublinearity}
        \sum_{i \geq 1} \IP(\eta_i) - \IP(\eta) \geq c_0 \sum_{i \geq 2} \eta_i^{1/2}.
    \end{equation}
\end{lemma}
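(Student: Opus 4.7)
The plan is to single out the largest term $\eta_1$ and bound each smaller component using the universal lower bound $\IP(s) \geq (2\pi s)^{1/2}$ from \eqref{eqn: bounds on profile}, which gives
\beqas
\sum_{i \in I} \IP(\eta_i) - \IP(\eta) \;\geq\; (2\pi)^{1/2} \sum_{i \geq 2} \eta_i^{1/2} \;-\; \bigl[\IP(\eta) - \IP(\eta_1)\bigr].
\eeqas
The main task is then to absorb the deficit $\IP(\eta) - \IP(\eta_1)$ into the first sum.

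For this I would use the one-sided FTC estimate \eqref{eqn: ftc profile}, which controls the upper right-derivative of $\IP$ by the curvature of a minimizer at $\eta_1$, to conclude $\IP(\eta) - \IP(\eta_1) \leq (\pi/\eta_1)^{1/2}(\eta - \eta_1) = (\pi/\eta_1)^{1/2} \sum_{i \geq 2} \eta_i$. The key observation is that, because the sequence is non-increasing, $\eta_i \leq \eta_1$ for $i \geq 2$, and hence
\beqas
\sum_{i \geq 2} \eta_i \;=\; \sum_{i \geq 2} \eta_i^{1/2} \cdot \eta_i^{1/2} \;\leq\; \eta_1^{1/2} \sum_{i \geq 2} \eta_i^{1/2}.
\eeqas
Combining these two estimates yields $\IP(\eta) - \IP(\eta_1) \leq \pi^{1/2} \sum_{i \geq 2} \eta_i^{1/2}$, and inserting into the first display gives the result with $c_0 = (\sqrt{2}-1)\,\pi^{1/2}$, in fact a universal constant independent of $\eta$ and $\kappamaxSi$.

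I do not expect any real obstacle: the argument is purely one-dimensional and only uses the already established bounds \eqref{eqn: bounds on profile} and \eqref{eqn: ftc profile}. The only bookkeeping point is the countable case, where, if $\sum_{i \geq 2} \eta_i^{1/2} = \infty$, the left-hand side is also $+\infty$ by the lower bound on $\IP$ applied termwise, so the inequality holds trivially; otherwise one applies the argument to partial sums and passes to the limit using dominated convergence on $\sum \IP(\eta_i) \leq 2\pi^{1/2} \sum \eta_i^{1/2}$.
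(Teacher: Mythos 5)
Your proof is correct, and it takes a genuinely different route from the paper's. The paper proves the lemma by an explicit geometric competitor construction: after reindexing so that the minimizer radii $r_i$ (rather than the areas $\eta_i$) are non-increasing, it takes a minimizer $E_1$ of $\IP(\eta_1)$, slices it along a line, and inserts a rectangle of width $2r_1$ and height $\sum_{i \geq 2}\eta_i/(2r_1)$ to create an area-$\eta$ competitor; the quantitative gain comes from the turning-angle bound \eqref{eqn: theta bound turning angle}, which shows the rectangle's perimeter contribution $\sum_{i\geq 2}\eta_i/r_1$ is smaller than $\sum_{i\geq2}\IP(\eta_i)$ by the factor $(1-\bar\varepsilon)$, yielding $c_0 = (2/\pi)^{1/2}\arctan\big((\pi/2\eta)^{1/2}\kappamaxSi^{-1}\big)$. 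Your argument instead works entirely at the level of the profile: you use the lower bound $\IP(s)\geq (2\pi s)^{1/2}$ on the small components and the one-sided Lipschitz bound \eqref{eqn: ftc profile} on $\IP(\eta)-\IP(\eta_1)$, with the monotonicity of $\{\eta_i\}$ absorbing the deficit. The upshot is a shorter proof with a universal constant $(\sqrt{2}-1)\pi^{1/2}$, actually independent of $\Sigma$ and $\eta$, which slightly strengthens the statement; the trade-off is that the paper's constant is larger in the small-$\eta\kappamaxSi^2$ regime (approaching $(\pi/2)^{1/2}$), which aligns with the paper's emphasis on explicit, geometry-adapted constants. Your handling of the countable case is also fine, though one can say it even more simply: both sides are $+\infty$ simultaneously because $\IP(\eta_i)$ is comparable to $\eta_i^{1/2}$, and the finite case is just partial-sum manipulation with nonnegative terms.
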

\begin{proof} If the index set $I$ has cardinality $1$, there is nothing to show, so we assume it is at least $2$.
For each $i \in I$, let $r_i$ denote the radius of a minimizer of $\IP(\eta_i)$.\footnote{While not needed for the proof, we recall from Section~\ref{sec: preliminaries} that for almost every $\eta,$ there is a unique such $r$.} Up to reindexing, we may assume that $r_1 \geq r_2 \geq \dots$.  It suffices to show \eqref{eqn: quant sublinearity} for this reindexed sequence, since for the two sequences, the left-hand sides of \eqref{eqn: quant sublinearity} are equal and the right-hand sides are ordered.

Set  $\bar \e = \pi^{-1}\arctan(1/(r_1 \kappamaxSi))$. The bound \eqref{eqn: theta bound turning angle} guarantees that $\IP(\eta_i) \leq 2\pi (1-\bar\e) r_i$
for each $i \in I$. Combining this with  the lower bound \eqref{eqn: bounds on profile} on the isoperimetric profile shows that 
\begin{equation}\label{eqn: sublinear intermed bound}
\eta_i \leq \mfrac{ \IP(\eta_i)^2}{2\pi} \leq (1-\bar \e)  \IP(\eta_i) r_i \le  (1-\bar \e)  \IP(\eta_i) r_1.
\end{equation}

We construct a competitor for the area-$\eta$ isoperimetric problem as follows. 
 Up to a rotation and translation, we may assume that a minimizer $E_1$ of $\IP(\eta_1)$ is bounded by $\Si$ and a circular arc of radius $r_1$ centered at the origin with one endpoint at $(r_1,0)$ and the other endpoint in the third quadrant. 
 For each $i\geq 2,$ let $\ell_i = \eta_i/r_1$, so that $\ell_i \leq (1-\bar \e)\,\IP(\eta_i)$.
 Define the rectangle $R = (-r_1, r_1)\times (0, \sum_{i \geq 2 } \ell_i/2)$. 
Then, letting $H^{\pm}=\{(x,y) : \pm y>0\}$ and $E_1^{\pm} = E_1\cap H^{\pm}$,  
 consider the set 
$$F = E_1^- \cup R \cup (E_1^+ + (0, \textstyle\sum_{i \geq 2 } \ell_i/2)).
$$
The convexity of $\Omega$ guarantees that $F \subset \R^2 \setminus \Omega$, and by construction $|F| = \sum_{i \in I} \eta_i = \eta$ and 
\[
P(F; \R^2\setminus \overline{\Omega})= \IP(\eta_1) + \sum_{i \geq 2} \ell_i \leq \IP(\eta_1) + (1-\bar{\e}) \sum_{i \geq 2} \IP(\eta_i).
\]
Using $ P(F; \R^2\setminus \overline{\Omega}) \geq \IP(\eta)$ and applying the lower bound $\IP(\eta_i) \geq (2\pi \eta_i)^{1/2}$ from \eqref{eqn: bounds on profile} once again, the desired estimate \eqref{eqn: quant sublinearity} follows with the constant 
\[(2\pi)^\half \bar \e \geq \Big(\mfrac{2}{\pi}\Big)^{\half} \arctan\Big(\Big(\mfrac{\pi}{2\eta}\Big)^{\half} \kappamaxSi^{-1} \Big)=:c_0.
\]
This completes the proof.
\end{proof}

  \begin{lemma}\label{lem: red 1 simply connected}
  Fix $\eta>0$. 
  There exist positive constants $C_2= C_2(\kappamaxSi,\eta)$ and $\delta_2 =\delta_2(\kappamaxSi, \eta)$ such that the following holds.
Let $E \subset \R^2 \setminus \Omega$ be a set of finite perimeter with $|E|=\eta$ and $\delta_\eta(E) \leq \delta_2$.
Then there is a connected open set $F$ in $\R^2 \setminus {\Omega}$ with $|F| \geq \eta$
whose boundary is a rectifiable Jordan curve, coinciding with $\Si$ on a connected, positive $\mathcal{H}^1$-measure set, 
such that
       \begin{align}
   |E\Delta F|^{\half}   \leq C_2 \,\delta_\eta(E) \quad \text{ and } \quad        P(F; \R^2 \setminus \Omega) \leq P(E ; \R^2 \setminus \Omega) + C_2\,\delta_\eta(E)\,.
    \end{align} 
   \end{lemma}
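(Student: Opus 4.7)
The plan is to decompose $E$ into its simple components via \cite{AmbrosioJEMS}, isolate the dominant one, and then form $F$ by filling all bounded pockets against $\Omega$.

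First I would apply the two-dimensional structure theorem to write $E$ (modulo a null set) as a disjoint union of simple indecomposable sets $\{E_i\}_{i\in I}$, each bounded by a rectifiable Jordan curve, with areas $\eta_i$ ordered so that $\eta_1\geq \eta_2\geq\ldots$. Additivity of the relative perimeter and the lower bound $P(E_i;\R^2\setminus\Omega)\geq \IP(\eta_i)$ from \eqref{est:length lower bound}, combined with Lemma~\ref{lem: quant sublinearity}, yield
\begin{equation*}
\sum_{i\geq 2}\eta_i^{1/2}\leq \delta_\eta(E)/c_0,\qquad\text{so that}\qquad|E\setminus E_1|=\sum_{i\geq 2}\eta_i\leq \delta_\eta(E)^2/c_0^2.
\end{equation*}
A brief contradiction argument, using $P(E_1;\R^2\setminus\Omega)\geq 2\sqrt{\pi\eta_1}$ from the classical isoperimetric inequality together with the strict improved upper bound \eqref{eqn: improved upper bound}, shows that for $\delta_2$ small enough $E_1\cap\Si$ has positive $\mathcal H^1$-measure, and moreover $E_1$ does not wrap annularly around $\Omega$ (such wrapping would incur perimeter at least $2\mathcal H^1(\Si)$, far exceeding $\IP(\eta)+\delta_2$ in the regime $\eta\leq \areaC\kappamaxSi^{-2}$).

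Next I would define $F$ by saturating $E$ against $\Omega$: with $V$ the unbounded connected component of $\R^2\setminus(\overline E\cup\overline\Omega)$, set $F:=\R^2\setminus(\overline V\cup\Omega)$. Then $F\supset E$, $F\subset\R^2\setminus\Omega$, $|F|\geq\eta$, and standard planar topology applied to the compact set $\overline E\cup\overline\Omega$ (whose boundary consists of rectifiable Jordan curves coming from the $E_i$'s together with the convex $\Si$) ensures that $F$ is connected with $\partial F$ a rectifiable Jordan curve. Denoting the bounded components of $\R^2\setminus(\overline E\cup\overline\Omega)$ by $\{P_j\}$ (the ``pockets'' filled in by the saturation) and $\sigma_j:=\partial P_j\setminus\Si$, one has $|F|-|E|=\sum_j|P_j|$ and
\begin{equation*}
\sum_j\mathcal H^1(\sigma_j)=P(E;\R^2\setminus\Omega)-P(F;\R^2\setminus\Omega)\leq P(E;\R^2\setminus\Omega)-\IP(\eta)=\delta_\eta(E),
\end{equation*}
using monotonicity of $\IP$ and $|F|\geq\eta$. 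Each $\sigma_j$ is either a closed rectifiable curve or bounds $P_j$ with endpoints on $\Si$, so $\mathcal H^1(\sigma_j)\geq \sqrt{2\pi|P_j|}$ by \eqref{est:length lower bound} and the half-plane lower bound in \eqref{eqn: bounds on profile}. Hence
\begin{equation*}
|E\Delta F|=\sum_j|P_j|\leq \tfrac{1}{2\pi}\sum_j\mathcal H^1(\sigma_j)^2\leq \tfrac{1}{2\pi}\Big(\sum_j\mathcal H^1(\sigma_j)\Big)^{\!2}\leq \tfrac{\delta_\eta(E)^2}{2\pi},
\end{equation*}
which combined with $P(F;\R^2\setminus\Omega)\leq P(E;\R^2\setminus\Omega)$ yields both claimed estimates.

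The main obstacle will be ensuring $\partial F\cap\Si$ is a \emph{connected} arc of positive length rather than a disjoint union of arcs: saturation may leave several disjoint pieces of $\partial F\cap\Si$ if the outer boundary of $E_1\cup\Omega$ touches $\Si$ in multiple arcs. I would resolve this by an additional cleanup step in which the smaller arcs of $\partial F\cap\Si$ are merged into a dominant one by adjoining thin collars of $\R^2\setminus\Omega$ along the intermediate arcs of $\Si$. For $\delta_2$ small enough, the established $L^1$ bound, together with the closeness of $E_1$ to a relative minimizer (whose boundary meets $\Si$ in a single arc, cf.\ \eqref{def:crit-points}), ensures that this cleanup has cost $O(\delta_\eta(E))$ in both area and perimeter, absorbable into the constant $C_2$.
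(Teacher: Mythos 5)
Your decomposition of $E$ into indecomposable components via the Ambrosio--Caselles--Masnou--Morel structure theorem and the use of Lemma~\ref{lem: quant sublinearity} mirror the paper's first step, but the subsequent saturation has a genuine gap. You saturate all of $E$ against $\Omega$, rather than first discarding the small components $E_2,E_3,\dots$ and working with $E_1$ alone. If $E$ contains small components far from both $\Omega$ and $E_1$---which a small deficit $\delta_\eta(E)$ does not preclude; it only forces $\sum_{i\ge 2}\eta_i$ to be small---these components are \emph{not} absorbed by the saturation $F=\R^2\setminus(\overline V\cup\Omega)$: they persist as separate pieces of $F$, so $F$ is disconnected and $\partial F$ is not a single Jordan curve. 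Your appeal to ``standard planar topology'' to assert connectedness does not address this. The paper resolves it by discarding $E_2,E_3,\dots$ before any saturation (losing volume $\eta-\eta_1\leq\delta_\eta(E)^2/c_0^2$, controlled by \eqref{eqn: quant sublinearity}), then filling the holes of the indecomposable set $E_1\cup\Omega$ by taking $G=\mathrm{int}(C^+)$ for the outer rectifiable Jordan curve $C^+$ of $\partial^M(E_1\cup\Omega)$, and finally restoring the lost volume in a third step by attaching a small ball of perimeter $O(\delta_\eta(E))$.

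Your second concern---whether $\partial F\cap\Si$ is connected---is legitimate, but your proposed fix is circular: you invoke ``closeness of $E_1$ to a relative minimizer'' to control the cost of merging boundary arcs, yet that closeness is exactly what the stability theorem is in the process of establishing and is not available when proving this reduction lemma. In the paper's construction, the single outer Jordan curve $C^+$ gives $\partial F_1\setminus\Si=C^+\setminus\Si$ and $\partial F_1\cap\Si=\Si\setminus C^+$ directly as part of the structure, so no collar-merging step (and hence no closeness-to-minimizer input) is needed. Your quantitative bookkeeping for the pocket-filling step---bounding $|E\Delta F|\leq(2\pi)^{-1}\delta_\eta(E)^2$ via the isoperimetric inequality applied to each pocket $P_j$---is sound in spirit and gives the correct scaling, matching the paper's estimate.
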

\begin{proof}
{\it Step 1:}
Recall that a set of finite perimeter $G$ is said to be indecomposable if $|G_1|\,|G_2| = 0$ for  any disjoint sets $G_1,G_2$ such that $G= G_1 \cup G_2$  and $P(G) = P(G_1) + P(G_2)$. By \cite[Theorem 1]{AmbrosioJEMS}, $E$ admits a unique decomposition as the union of at most countably many pairwise disjoint indecomposable sets $\{E_i\}_{i \in I}$ such that $|E_i| >0$ and $P(E) = \sum_{i\in I } P(E_i)$. It follows from \cite[Proposition 3]{AmbrosioJEMS} and  Federer's and De Giorgi's theorems \cite[Theorem 16.2, Theorem 15.9]{MaggiBOOK} that $P(E;\R^2 \setminus{\Omega}) = \sum_{i \in I } P(E_i; \R^2 \setminus {\Omega})$ as well. Therefore, letting $\eta_i = |E_i|$ and reindexing so the $\eta_i$ are non-increasing,  \Cref{lem: quant sublinearity} implies that 
\begin{align}\label{eqn: tiny components}
    \delta_\eta(E) & \geq \sum_{i \in I} \IP(\eta_i) - \IP(\eta) \geq c_0 \sum_{i \geq 2} \eta_i^\half \geq c_0 \Big(\sum_{i \geq 2} \eta_i \Big)^\half =c_0 (\eta -\eta_1)^\half.
\end{align}
Here  $c_0 = c_0(\kappamaxSi, \eta)$ is the constant from \Cref{lem: quant sublinearity} and the final inequality follows from concavity. Therefore, the  indecomposable set $E_1$ satisfies 
\begin{equation}\label{eqn: one component bound}
    \delta_\eta(E) \geq c_0(\eta -\eta_1)^\half= c_0 |E \Delta E_1|^{\half} \quad \text{ and } \quad P(E_1;\R^2 \setminus \overline{\Omega}) \leq P(E;\R^2 \setminus \overline{\Omega})\,.
\end{equation}
We choose $\delta_2 \leq c_0\eta^{1/2}/2$, so that the first estimate in \eqref{eqn: one component bound} guarantees that $\eta_1 \geq \eta/2.$ 

We claim that $P(E_1 ; \R^2 \setminus \Omega) < P(E_1)$, which in turn guarantees that $\partial E_1$ intersects $\Si$ on a set of positive $\mathcal{H}^1$ measure and that $E_1 \cup \Omega$ is indecomposable. 
To see this, 
take a ball of area $\eta - |E_1|= |E \Delta E_1|$ at positive distance from $E_1$ and $\Omega$. By the first bound in \eqref{eqn: one component bound}, this ball has perimeter at most $C\delta_\eta(E)$ for  $C = C(\kappamaxSi, \eta)$. So, using the second bound in \eqref{eqn: one component bound}, the union $\tilde{E}$ of $E_1$ and this ball has perimeter 
\[
P(\tilde{E}; \R^2 \setminus \Omega) \leq \IP(\eta)+ C\delta_2 <  2\pi^\half \eta^\half.
\]
Here, the second inequality holds thanks to the upper bound \eqref{eqn: improved upper bound} for the isoperimetric profile, provided we choose  $\delta_2 \leq \frac{1}{C}\, 2\pi^\half\eta^\half\{1-  (1  -\pi^{-1} \arctan( \big( \tfrac{\pi}{2\eta}\big)^\half {\kappamaxSi}^{-1})^{\half}\}$. On the other hand, $P(\tilde{E}) \geq 2\pi^{1/2} \eta^{1/2}$ by the isoperimetric inequality. Thus $P(\tilde{E}; \R^2 \setminus \Omega) < P(\tilde{E})$, which by definition of $\tilde{E}$ proves the claim.

{\it Step 2: }
Next we ''fill in the holes'' of $E_1 \cup \Omega$. More specifically, since $E_1 \cup \Omega$ is indecomposable,
 \cite[Corollary 1]{AmbrosioJEMS} says that the essential boundary $\partial^M (E_1 \cup \Omega)$ admits a unique decomposition into at most countably many rectifiable Jordan curves $C^+$ and $\{C^-_j\}_{j \in J}$ with $\text{int}(C^-_j) \subset \text{int}(C^+)$ such that 
\[
E_1 \cup \Omega= \text{int}(C^+)\setminus \bigcup_{j \in J} \text{int}(C^-_j), \qquad P(E_1 \cup \Omega)  = \mathcal{H}^1(C^+) + \sum_{j \in J} \mathcal{H}^1(C^-_j).
\]
Let $G =\text{int}(C^+)$ and 
$F_1 = G\setminus \Omega$. 
Notice that by construction, $F_1 \supset E_1$, $\partial F_1 \setminus \Si=C^+\setminus \Sigma$ is a single rectifiable curve in $\R^2 \setminus {\Omega}$  with endpoints on $\Sigma$, and $\partial F_1 \cap \Si = \Si \setminus C^+$ is a connected set with positive $\mathcal{H}^1$ measure.
Letting $\hat{C}^+:= C^+\setminus \Sigma$ and $\hat{C}^-_j = C^-_j \setminus \Si$ for each $j \in J$, it follows from the decomposition above  together with Federer's and De Giorgi's theorems that $P(E_1; \R^2\setminus \Omega) = \mathcal{H}^1(\hat{C}^+) + \sum_{j \in J}  \mathcal{H}^1(\hat{C}^-_j)$ and hence
\begin{equation}\label{eqn: holes energy bound}
    P(F_1; \R^2 \setminus{\Omega}) \leq P(E_1; \R^2 \setminus{\Omega})\,.
\end{equation}
To bound $|F_1\Delta E_1 |$, let  $a_j = |\text{int}( C^-_j)|$, so that $|F_1\Delta E_1|^{1/2} = (\sum_{j \in J} a_j)^{1/2}\leq \sum_{j \in J} a_j^{1/2}$.
From the lower bound \eqref{eqn: bounds on profile} on the isoperimetric profile, $(2\pi a_j)^{1/2} \leq \mathcal{H}^1(\hat{C}_j^-)$. So, as $|F_1|\geq |E_1|=\eta_1$ and hence $\IP(|F_1|)\geq \IP(\eta_1)$, we have
\begin{equation}
  \begin{split}
  \label{eqn: holes volume bound}
(2\pi)^\half|E_1 \Delta F_1|^\half &\leq P(E_1; \R^2 \setminus {\Omega}) - P(F_1; \R^2 \setminus {\Omega})\\
&\leq P(E_1; \R^2 \setminus {\Omega}) - \IP(\eta_1) \leq \delta_\eta(E) + (\IP(\eta) - \IP(\eta_1)).
  \end{split}  
\end{equation}
So, recalling from above that  $\eta_1 \geq \eta/2$, the local Lipschitz estimate \eqref{eqn: ftc profile} and \eqref{eqn: tiny components} show that  $\IP(\eta) - \IP(\eta_1) \leq (\pi/\eta_1)^{1/2} (\eta - \eta_1) \leq  C\, \delta_\eta(E)$ where $C=\pi^{1/2}/c_0$. 
Combining this with \eqref{eqn: holes volume bound}, \eqref{eqn: holes energy bound}
and \eqref{eqn: one component bound} shows the existence of $C=C(\kappa_{max}(\Sigma), \eta)$ such that 
 \begin{equation}\label{eqn: almost done}
     |E\Delta F_1|^\half \leq C\delta_\eta(E) \qquad \text{ and } \qquad P(F_1; \R^2 \setminus \Omega) \leq P(E; \R^2 \setminus \Omega).
 \end{equation}

 {\it Step 3:}
If $|F_1| \geq \eta,$ we complete the proof by
taking $F=F_1$.
Otherwise, as in step 1, take a ball of area $\eta - |F_1|$, and thus of perimeter at most $C\delta_\eta(E)$. Since $F_1$ is bounded, we may translate this ball from infinity along some ray so that it is disjoint from ${\Omega} \cup F_1$ and its boundary intersects $\partial F_1 \setminus \Si$. Then, by a slight deformation of $F_1 = F \cup B$ gives a set satisfying the conclusions of the lemma.
\end{proof}

\subsection{Reduction to a set bounded by a convex curve}\label{ssec: by hand reduction}
The next step toward \Cref{prop: summary reduction} is to replace a set of the type obtained in \Cref{lem: red 1 simply connected} by a relatively convex set with acute contact angle. First we prove the following elementary geometric lemma.
\begin{lemma} \label{lem: basic hausdorff dist est}
Fix $\bar{L}>0.$ There exists $C=C(\bar{L})>0$ such that the following holds.
    Let $\ga:[0,1] \to \R^2$ be a rectifiable curve with $L(\ga) \leq \bar{L}$ and let $\tilde{\ga}: [0,1] \to\R^2$ be a parametrization of the linear segment joining $\ga(0)$ and $\ga(1)$.
    \begin{equation}
        \label{eqn: hausdorff estimate}
    d_H(\ga, \tilde{\ga})^2 \leq C( L(\ga) - L(\tilde{\ga})). 
        \end{equation}
\end{lemma}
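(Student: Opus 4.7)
Let $p_0 = \ga(0)$, $p_1 = \ga(1)$, $\ell = |p_1 - p_0| = L(\tilde\ga)$; after a rigid motion assume $p_0 = 0$ and $p_1 = (\ell, 0)$, and set $d_H := d_H(\ga, \tilde\ga)$. The plan is to establish that in every configuration either
\begin{equation*}
 L(\ga)^2 \geq \ell^2 + 4 d_H^2 \qquad\text{or}\qquad L(\ga) - \ell \geq d_H\,,
\end{equation*}
from which the claim follows with $C = \bar L$: the first alternative gives $d_H^2 \leq \tfrac14(L(\ga)-\ell)(L(\ga)+\ell) \leq \tfrac{\bar L}{2}(L(\ga)-L(\tilde\ga))$ upon using $L(\ga)+\ell \leq 2\bar L$, and the second gives $d_H^2 \leq d_H\, \bar L \leq \bar L(L(\ga)-L(\tilde\ga))$ upon using $d_H \leq L(\ga) \leq \bar L$.

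First I would handle the case where $d_H = \sup_{y \in \tilde\ga} d(y, \ga)$ is attained at some $q = (s,0)$ with $s \in [0,\ell]$. Since the $x$-coordinate of $\ga$ is a continuous function on $[0,1]$ taking values $0$ and $\ell$ at the endpoints, the intermediate value theorem produces a point $r = \ga(t_*) = (s, y_r)$ on $\ga$ with $|y_r| = |r-q| \geq d(q,\ga) = d_H$. Applying the triangle inequality for the length of a rectifiable curve at the interior point $r$ yields
\begin{equation*}
 L(\ga) \geq |p_0 - r| + |r - p_1| = \sqrt{s^2+y_r^2} + \sqrt{(\ell-s)^2+y_r^2} \geq \sqrt{s^2+d_H^2} + \sqrt{(\ell-s)^2+d_H^2}\,.
\end{equation*}
The right-hand side is convex in $s$ and symmetric about $s = \ell/2$, so it is minimized there, giving $L(\ga) \geq \sqrt{\ell^2 + 4 d_H^2}$, which is the first alternative.

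Next I would address the remaining case, in which $d_H = \sup_{x \in \ga} d(x,\tilde\ga)$ is attained at some $q \in \ga$. If the orthogonal projection of $q$ onto the line through $p_0, p_1$ lies in the closed segment $\tilde\ga$, then $d_H$ is the perpendicular distance from $q$ to this line and the previous argument applies verbatim with $r = q$, yielding the first alternative. Otherwise the projection lies strictly beyond one endpoint, say $p_0$; then the first coordinate of $q$ is negative, $d_H = |p_0 - q|$, and $|q - p_1| \geq \ell$. The triangle inequality at $q$ now gives $L(\ga) \geq |p_0-q| + |q-p_1| \geq d_H + \ell$, which is the second alternative.

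There is no substantive obstacle: the only ingredients beyond elementary planar geometry are continuity of a rectifiable parametrization (to invoke the IVT) and sub-additivity of rectifiable length at an interior point, both of which are immediate from the definition of length via inscribed polygons.
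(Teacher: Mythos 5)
Your proof is correct, and it takes a genuinely different route from the paper. The paper's argument is purely algebraic: for a point $z\in\gamma$ with orthogonal projection $\hat z$ onto the line through the endpoints, it applies the Pythagorean theorem in the two right triangles $\Delta(x_0,z,\hat z)$, $\Delta(x_1,z,\hat z)$ to write
\[
2|z-\hat z|^2 = \bigl(|z-x_0|^2-|\hat z-x_0|^2\bigr)+\bigl(|z-x_1|^2-|\hat z-x_1|^2\bigr),
\]
factors each difference of squares, bounds the sum factors by $2\bar L$, and uses $|z-x_0|+|z-x_1|\le L(\gamma)$ together with $|\hat z-x_0|+|\hat z-x_1|\ge L(\tilde\gamma)$. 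This bounds the distance to the \emph{line}, and the paper leaves implicit both the passage from the line to the segment $\tilde\gamma$ (i.e.\ the case where $\hat z$ falls beyond an endpoint) and the reverse inclusion needed for the Hausdorff distance (every point of $\tilde\gamma$ is near $\gamma$, which follows by connectedness of the projection). Your argument, by contrast, works directly with the definition of $d_H$, splitting according to which of the two suprema attains the max, invoking the IVT to manufacture a witness $r\in\gamma$ above $q\in\tilde\gamma$, minimizing $s\mapsto\sqrt{s^2+d_H^2}+\sqrt{(\ell-s)^2+d_H^2}$ by convexity and symmetry, and treating the off-segment projection case separately via $L(\gamma)\ge d_H+\ell$. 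The payoff of your approach is that it makes all of the Hausdorff-distance bookkeeping explicit and rigorous; the payoff of the paper's is brevity, though it is terse about the cases you handle carefully. Both yield $C=\bar L$.
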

\begin{proof}
It suffices to bound  the distance between any point $z\in \gamma$ to its projection $\hat z$ onto the line through the endpoints $x_0$ and $x_1$ of $\gamma$.   Pythagoras, applied to the triangles $\Delta(x_0,z,\hat z)$ and $\Delta(x_1,z,\hat z)$, immediately gives the required bound of
\beqa 2 \abs{z-\hat z}^2&=\abs{z-x_0}^2-\abs{\hat z-x_0}^2+\abs{z-x_1}^2-\abs{\hat z-x_1}^2\\
&\leq 2\bar L \cdot (\abs{z-x_0}+\abs{z-x_1}-[\abs{\hat z-x_0}+\abs{\hat z-x_1}])
\leq 2\bar L\cdot (L(\gamma)-L(\tilde \gamma)).\eeqa
\end{proof}

\begin{lemma}\label{lem:handRed1}
Fix $\eta \in (0 , \areaC \kappamaxSi^{-2}]$. 
There are positive constants $\delta_3 = \delta_3 (\eta, \kappamaxSi)$ and $C_3= C_3(\eta, \kappamaxSi)$ such that the following holds. 
Let $E \subset \R^2 \setminus \Omega$ be a connected open set with $|E| \geq \eta$
 whose boundary is a rectifiable Jordan curve coinciding with $\Si$ on a connected, positive $\mathcal{H}^1$-measure set
 such that
       \begin{align}\label{eqn: hp convex}
P(E; \R^2 \setminus \Omega) \leq \IP(\eta) + \delta_3.
    \end{align} 
Then there is an open, relatively convex set $F\supset E$ such that $\partial F \setminus \Si$ meets $\Sigma$ with interior angles at most $\pi/2$ and
    \begin{align}\label{eqn: convex perim}
       P(F; \R^2 \setminus \Omega)  &\leq P(E; \R^2 \setminus \Omega),\\
    \label{eqn: convex L1 est} 
    d_H (\partial E, \partial F)^2 + |E\Delta F|  & \leq C_3 (  P(E; \R^2 \setminus \Omega)  - \IP(\eta)).
    \end{align}
\end{lemma}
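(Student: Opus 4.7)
I will take $F$ to be the relative convex hull of $E$ in $\R^2 \setminus \Omega$, defined by
\begin{equation*}
F := \mathrm{int}(K) \setminus \overline{\Omega}, \qquad K := \mathrm{conv}(\overline{E} \cup \Omega).
\end{equation*}
Since $\mathrm{int}(K)$ is open and convex, $F$ is open and relatively convex, and by construction $E \subset F$. The relative boundary $\partial F \setminus \Sigma = \partial K \setminus \overline{\Omega}$ is a rectifiable convex Jordan arc joining two points $x_0, x_1 \in \Sigma$.

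For the perimeter bound $P(F; \R^2 \setminus \Omega) \le P(E; \R^2 \setminus \Omega)$, I would apply the classical convex hull inequality $\mathcal{H}^1(\partial K) \le \mathcal{H}^1(\partial(E \cup \Omega))$. Setting $\sigma_E := \partial E \cap \Sigma$ and $\sigma_K := \Sigma \cap \partial K$, and using the decompositions $\partial K = (\partial F \setminus \Sigma) \cup \sigma_K$ and (up to negligible endpoint sets) $\partial(E \cup \Omega) = (\partial E \setminus \Sigma) \cup (\Sigma \setminus \sigma_E)$ together with the observation $\sigma_E \subset \Sigma \setminus \sigma_K$ (since points of $\sigma_E$ lie in the interior of $E \cup \Omega$, hence of $K$), one deduces
\begin{equation*}
P(F; \R^2 \setminus \Omega) - P(E; \R^2 \setminus \Omega) \le \mathcal{H}^1\bigl( (\Sigma \setminus \sigma_K) \setminus \sigma_E \bigr).
\end{equation*}
The key step is to show that the right-hand side vanishes, i.e.\ that the convex hull does not extend along $\Sigma$ beyond $\sigma_E$. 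Any such extension would correspond to a ``mushroom-like'' configuration of $E$ with an overhang above a portion of $\Sigma$ disjoint from $\sigma_E$; this generates a perimeter deficit bounded below by a positive constant depending only on $\eta$ and $\kappamaxSi$, so choosing $\delta_3$ small enough excludes it.

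For the interior contact angles, observe that as $\delta_\eta(E) \to 0$, $F$ converges to an isoperimetric minimizer $F_*$ whose contact angles are exactly $\pi/2$. A quantitative version of this continuity argument gives $\alpha_i \le \pi/2 + o_{\delta_3}(1)$; if $\alpha_i$ slightly exceeds $\pi/2$, one performs a small corner-rounding modification cutting off a triangle at $x_i$ by a line orthogonal to $\Sigma$, which preserves $F \supset E$ and changes the other estimates by $O(\delta_\eta(E))$. For the distance bounds, $E \subset F$ gives $|E \Delta F| = |F \setminus E|$, and the region $F \setminus E$ lies between $\partial E \setminus \Sigma$ and $\partial F \setminus \Sigma$; a Bonnesen-type area-length inequality then bounds its area by $C(\mathcal{H}^1(\partial E \setminus \Sigma) - \mathcal{H}^1(\partial F \setminus \Sigma)) \le C \delta_\eta(E)$. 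The Hausdorff estimate follows by applying Lemma~\ref{lem: basic hausdorff dist est} to a chord decomposition of $\partial E \setminus \Sigma$.

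The main obstacle is the ``mushroom exclusion'' step: showing $\mathcal{H}^1((\Sigma \setminus \sigma_K) \setminus \sigma_E) = 0$ (or negligible) under $\delta_\eta(E) \le \delta_3$. This requires a quantitative planar geometric argument combining the area bound $\eta \le \bar{\mathfrak{c}} \kappamaxSi^{-2}$ with the connectedness of $\sigma_E$. A secondary subtlety, namely enforcing the strict bound $\alpha_i \le \pi/2$ rather than the naturally obtained $\alpha_i \approx \pi/2$, is handled by the corner-rounding modification described above.
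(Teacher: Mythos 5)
Your overall strategy---pass to a relatively convex set by a convex-hull construction and use the monotonicity of perimeter under convex hull---is the same as the paper's, but the precise construction differs in a way that creates a genuine gap. You set $F = \mathrm{int}\bigl(\mathrm{conv}(\overline{E}\cup\Omega)\bigr)\setminus\overline{\Omega}$ directly and correctly reduce the perimeter comparison to bounding the \emph{mushroom coverage} $\mathcal{H}^1\bigl((\Sigma\cap K^\circ)\setminus\sigma_E\bigr)$. The gap is your claim that a nonzero coverage forces a perimeter deficit bounded below by a positive constant depending only on $\eta$ and $\kappamaxSi$, so that shrinking $\delta_3$ excludes it. That dichotomy is false. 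Take $E$ to be a near-minimizer with a thin ``hook'' attached: a narrow appendage of length $\ell$ and width $w\ll\ell$ that hugs $\Sigma$ at a small positive distance but never touches it. Then $\sigma_E$ remains a single connected arc (the hypotheses are satisfied), while $\mathrm{conv}(\overline{E}\cup\Omega)$ bulges over a sliver of $\Sigma$ of length $\sim\ell$ under the hook, so the coverage is $\sim\ell>0$; yet $\delta_\eta(E)\sim\ell$ as well. Both quantities go to $0$ together, so no choice of $\delta_3$ kills the coverage. Your derivation therefore gives only $P(F;\R^2\setminus\Omega)\le P(E;\R^2\setminus\Omega)+C\delta_\eta(E)$, not the required monotone inequality \eqref{eqn: convex perim}; and, independently, you would still need to argue that $\mathrm{int}(K)\setminus\overline{\Omega}$ is connected.

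The paper avoids this by inserting an intermediate chopping step before convexifying. It first identifies the sub-arc $\tilde\Sigma\subset\Sigma$ swept out by normal rays from $\Sigma$ to $\partial E\setminus\Sigma$ with endpoints $x_0,x_1$, then replaces the tails of the boundary curve by segments to $x_0$ and $x_1$ (this shortens length by convexity and controls Hausdorff distance via \Cref{lem: basic hausdorff dist est}), and forms $\hat E$ bounded by the chopped curve and the chord $[x_0,x_1]$. Only then does it pass to the convex hull $\hat F$. Because $\hat E$, hence $\hat F$, sits inside the convex region $\mathcal{K}$ bounded by $n_{x_0}$, $n_{x_1}$, and the chord, one has $\hat F\cap\Omega=\hat E\cap\Omega$ and the hull cannot overshadow extra portions of $\Sigma$---the mushroom problem never arises. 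The same region $\mathcal{K}$ gives the contact angle bound $\le\pi/2$ for free, whereas your continuity-plus-corner-rounding argument for the angles is left at the sketch level and would need to be made quantitative (and, if rounding were needed, one would have to check it does not break $F\supset E$).
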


\begin{proof}
Let   $\gamma:[0,1] \to \R^2$  be a  parametrization of $\partial E \setminus \Si$. Note that  $\gamma(0), \gamma(1) \in \Sigma$ and $\ga(p) \not \in \Omega$ for $p \in (0,1)$ and that by assumption $L(\gamma)\leq \bar L:= \IP(\eta)+\de_3$. 

 {\it Step 1:} Let $\tilde \Sigma \subset \Sigma$ be the set of points $x \in \Sigma$ for which the normal ray 
\begin{equation}
    \label{eqn: normal ray}
n_x : =\{y\in \mathbb{R}^2: y= x - t \nu_\Sigma(x) :  t \geq 0\}.
\end{equation}
  has nontrivial intersection with the trace of $\gamma$. Recall we orient $\Sigma$ positively so that $\nu_\Sigma$ is the inner normal of $\Sigma$.
  The set $\tilde\Sigma$ is connected thanks to the continuity of $\gamma$. Choose $\delta_3 \leq \delta_0$, where $\delta_0$ is from \Cref{lem: sat flow hp}, so that
 $L(\gamma)\leq { d_\Sigma}/{2}$ by \eqref{eqn: hp convex}, \Cref{lem: sat flow hp}, and Remark~\ref{rmk: d sigma bound}. Using this, a basic geometric argument shows that   $\tilde{\Sigma}$ is a proper subset of $\Sigma$
and the image of $\tilde{\Si}$ under the Gauss map of $\Si$ is a connected proper subset of a half circle of $\mathbb{S}^1.$

Let $j_0 < j_1$ be the endpoints of the interval $J$ for which $\tilde{\Sigma}$ is the trace of $\sigma$ restricted to the interval $J$, and let 
 $x_0= \si(j_0)$ and $x_1=\si(j_1)$.
Assume $\ga$ is oriented such that $j_0 \leq j_0' < j_1'\leq j_1$ where $\si(j_0') = \ga(0)$ and $\si(j_1')=\ga(1).$ With this orientation, we have $\underline{a}< \overline{a}$ where
\begin{align*}
 \underline{a} & = \sup \{ p \in [0, 1] : \gamma(p) \in n_{x_0}\} \\
 \overline{a} &  =\inf \{ p \in [0,1] : \ga(p) \in n_{x_1}\}.
\end{align*}
Note that $\gamma(\underline{a}) \in n_{x_0}$ and $\ga(\overline{a}) \in n_{x_1}$.  
Define the curve $\hat{\ga}:[0,1] \to \R^2 \setminus \Omega$ by letting $\hat{\g}=\g$ on $[\underline{a}, \overline{a}]$, and on the (possibly trivial) intervals $[0,\underline{a}]$ and $[\overline{a},1]$, letting $\hat{\gamma}$ parametrize the segments joining  $x_0$ to $\gamma(\underline{a})$ and joining $\gamma(\overline{a})$ to $x_1$ respectively.
The convexity of $\Sigma$ and a simple trigonometric argument show 
\begin{equation}\label{eqn: segment replacement}
L(\hat{\ga}) \leq L(\ga).
\end{equation}
Moreover, we claim that 
\begin{equation}\label{eqn: hausdorff est 1}
    d_H(\hat{\ga}, \ga)^2  \leq 3\bar{L} \, (L(\ga) - L(\hat{\ga}))\,.
\end{equation}
To see \eqref{eqn: hausdorff est 1}, first let $\tilde{\ga}:[0,1] \to \R^2 \setminus\Omega$ be the curve that is equal  to $\ga$ on $[0,1]$, joins $\ga(0)$ to $\ga(\underline{a})$ linearly on $[0,\underline{a}]$, and joins $\ga(\overline{a})$ to $\ga(1)$ linearly on $[\overline{a},1]$. It is simple to see that $d_H(\hat{\ga}, \tilde{\ga
})^2 \leq L(\tilde{\ga})^2 - L(\hat{\ga})^2 \leq 2\bar{L}(L(\tilde{\ga}) - L(\hat{\ga}))$. Next, by \Cref{lem: basic hausdorff dist est},
$d_H(\tilde{\g}, \g)^2 \leq \bar L(  L(\g) - L(\tilde{\g}) )$. Combining these two bounds yields \eqref{eqn: hausdorff est 1}.

Let $\hat{E}$ be the set bounded by $\hat{\g}$ and the segment joining $x_0$ to $x_1$. As $\abs{\hat{E}\setminus \Om}\geq \abs{E\setminus \Om}\geq \eta$, we have $P(\hat{E} ; \R^2 \setminus{\Omega}) = L(\hat\gamma)\geq \IP(\eta),$ so \eqref{eqn: segment replacement} and \eqref{eqn: hausdorff est 1} imply
\begin{equation}
    \label{eqn: summary 1}
P(\hat{E} ; \R^2 \setminus{\Omega}) \leq P(E;\R^2 \setminus{\Omega}), \qquad d_H(\partial E , \partial (\hat{E} \setminus \Omega))^2
\leq C (  P(E; \R^2 \setminus \Omega)  - \IP(\eta)) .
\end{equation}

Note the $E\subset \hat{E}$ and that $\hat{E}$ is contained in the convex region $\mathcal{K}$ bounded by $n_{x_0}, n_{x_1}$, and the segment joining $x_0$ to $x_1$.

{\it Step 2:} 
Next, let $\hat{F}$ be the convex hull of $\hat{E}$ in $\R^2$. Then $\hat{F}\supset \hat{E}$ and $P(\hat{F}) \leq P(\hat{E})$ (this classical fact is shown in the context of indecomposable sets of finite perimeter in \cite[Theorems 1 and 6]{FerrFusco}). So, since  $\hat{F}\cap {\Omega} = \hat{E}\cap {\Omega}$ by construction, the relatively convex set 
 $F = \hat{F} \setminus {\Omega}$ satisfies
\begin{equation}\label{eqn: sum 2}
    P({F}; \R^2\setminus {\Omega}) \leq P(\hat{E}; \R^2\setminus{\Omega})\,.
\end{equation}
Moreover, since $\partial F \setminus \Si$ is locally linear where it is not contained in $\partial \hat{E}$, an application of \Cref{lem: basic hausdorff dist est} shows that 
\begin{equation}\label{eqn: sum 3}
    d_H(\partial F , \partial (\hat{E} \setminus\Omega))^2  \leq C(P(\hat{E}; \R^2\setminus {\Omega}) -  P({F}; \R^2\setminus {\Omega}))  \leq C (  P(E; \R^2 \setminus \Omega)  - \IP(\eta)) \leq C  \delta_3
   .
\end{equation}
The final inequality comes from \eqref{eqn: hp convex} while the penultimate inequality uses the fact that $|F| \geq \eta$ and thus $P({F}; \R^2\setminus{\Omega})  \geq \IP(\eta)$.
Finally, $\hat{F}$ is also contained in the convex region $\mathcal{K}$. So, the rectifiable curve parametrizing $\partial F\setminus \Si$ meets $\Si$ at the points $x_0$ and $x_1$ with interior angle at most $\pi/2$. Combining \eqref{eqn: summary 1}, \eqref{eqn: sum 2}, and \eqref{eqn: sum 3}, we obtain \eqref{eqn: convex perim} and the Hausdorff distance estimate of \eqref{eqn: convex L1 est}.

{\it Step 3:} It remains to show the bound on the symmetric difference in \eqref{eqn: convex L1 est} above.
 Let $\e = |F\Delta E| =  |F|-|E|$.
 Since the isoperimetric profile is a nondecreasing function of $\eta$ and $|F|\geq \eta +\e $, we have
  $ P(E, \R^2 \setminus \Omega)  \geq  P(F, \R^2 \setminus \Omega) \geq \IP(\eta+\e) \geq \IP(\eta)$. Combining this with the lower bound from  \eqref{eqn: ftc profile} yields
 \[
   \Big(\mfrac{\pi}{2({\eta}+\e)}\Big)^\half \e \leq   \IP({\eta} + \e) -\IP({\eta} ) \leq  P(E, \R^2 \setminus \Omega)  - \IP(\eta).
 \]
 So, the desired estimate holds provided we bound $ (\frac{\pi}{2({\eta}+\e)})^\half$ below by a constant depending only on $\eta$ and $\kappamaxSi$. To this end, let   $G = F\setminus E$, so $|G| =\e$. By e.g. \cite[Theorem 16.3]{MaggiBOOK}, we have $ P(G;\R^2 \setminus {\Omega}) \leq P(E ; \R^2 \setminus {\Omega}) + P(F;  \R^2 \setminus {\Omega})$. 
 So, applying the lower and upper bounds of  \eqref{eqn: bounds on profile} and recalling \eqref{eqn: hp convex} and \eqref{eqn: convex perim}, we obtain
\begin{align*}
 (2\pi \e)^\half \leq   \IP(\e)& \leq P(G;\R^2 \setminus {\Omega}) \leq 2(\IP(\eta) + \delta_3)  \leq 4(\pi \eta)^\half + 2\delta_3.
\end{align*}
This completes the proof.
\end{proof}

\subsection{Proof of \Cref{prop: summary reduction}} \label{ssec: proof of summary reduction}
We now  combine the results of the previous two subsections with a final area-correction step to show \Cref{prop: summary reduction}.
\begin{proof}[Proof of \Cref{prop: summary reduction}]
Let
$\delta_2$ and $C_2$ be as in  \Cref{lem: red 1 simply connected} and let $\delta_3$ and $C_3$ be as in \Cref{lem:handRed1}. Let 
$\delta_1 = \min\{ \delta_2,\delta_3/C_2, 1\}$. Applying \Cref{lem: red 1 simply connected}, we obtain a set $F_1$ satisfying the assumptions of \Cref{lem:handRed1} with
       \begin{align}\label{eqn: hand red final 1}
   |E\Delta F_1|^{\half}   \leq C_2 \delta_\eta(E) \quad \text{ and } \quad        P(F_1; \R^2 \setminus \Omega) \leq P(E ; \R^2 \setminus \Omega) + C_2\delta_\eta(E)\,.
    \end{align} 
Next, applying \Cref{lem:handRed1} to $F_1$, we obtain an open, relatively convex set $F_2\supset F_1$ such that $\partial F_2 \setminus \Si$ meets $\Sigma$ with interior angles at most $\pi/2$ and
    \begin{align}
       P(F_2; \R^2 \setminus \Omega)  &\leq P(F_1; \R^2 \setminus \Omega),\\ 
    d_H (\partial F_1, \partial F_2)^2 + |F_1\Delta F_2|  & \leq C_3 (  P(F_1; \R^2 \setminus \Omega)  - \IP(\eta)) \leq C \delta_\eta(E)
    \end{align}
    where $C= C_3(C_2+1)$.
Combining this with \eqref{eqn: hand red final 1}, we see that 
\begin{equation}
    \label{eqn: hand red final 2}
|E\Delta F_2| \leq C \delta_\eta(E), \qquad P(F_2 ; \R^2 \setminus \Omega )\leq P(E; \R^2 \setminus\Omega ) + C\delta_\eta(E).
\end{equation}
The set $F_2$ has $|F_2| \geq \eta$ by construction. If $|F_2|=\eta,$ we let $F=F_2$ and see that \eqref{eqn: reduction bound} holds. Otherwise, let $j_0 < j_1$ be chosen such that $ \si([j_0, j_1]) =\partial F_2 \cap \Si.$
  For $j \in [j_0, j_1],$ let $F_j$ be the intersection of $F_2$ with the convex region $\mathcal{R}_j$ bounded by the normal rays $n_{\sigma(j)}, n_{\si(j_1)}$, and the segment joining $\si(j)$ to $\si(j_1)$. The area of $F_j$ varies continuously in $j$ with $|F_{j_0}|>\eta$ and $|F_{j_1}|= 0$, so we may find $j\in [j_0, j_1]$ such that $F:=F_{j}$ has area $\eta.$ Thanks to the convexity of $\Si$, we immediately have 
  \begin{equation}
      P(F; \R^2 \setminus\Omega) \leq P(F_2; \R^2 \setminus \Omega),
       \end{equation}
       and by construction and \eqref{eqn: hand red final 2}, we have $|F\Delta F_2| = | F_2 | -\eta \leq  C\delta_\eta(E)$. Combining these estimates with \eqref{eqn: hand red final 2} yields \eqref{eqn: reduction bound}.

Finally, assume that $\partial E\setminus \Sigma$ is a rectifiable curve with endpoints on $\Si$. The same argument used in the proof of \Cref{lem: red 1 simply connected} shows that $\partial E$ intersects $\Si$ on a positive $\mathcal{H}^1$-measure set. Thus, there was no need to apply \Cref{lem: red 1 simply connected} because the set $E$ already met the hypotheses of \Cref{lem:handRed1}. Hence we may take $F_1:=E$ in the above argument, still have \eqref{eqn: hand red final 2} and now additionally obtain from 
\Cref{lem:handRed1}
that $  d_H(\partial E, \partial F_2 )^2 \leq C_3\de_\eta(E)$.
  Next, the same argument used in step 1 of \Cref{lem:handRed1} using \Cref{lem: basic hausdorff dist est} shows that 
  \[
  d_H(\partial F, \partial F_2 )^2 \leq C(P(F_2 ; \R^2 \setminus\Omega) - P(F; \R^2 \setminus \Omega))\leq C\delta_\eta(E) .
  \]
  Together with \eqref{eqn: hand red final 2}, this shows \eqref{eqn: reduction bound 2}. This completes the proof. 
\end{proof}

\appendix
\section{Proof of \eqref{est:length lower bound}}\label{app: proof of remark}

\begin{proof}[Proof of \eqref{est:length lower bound}] 
{Fix $\ga\in \B_\eta$. It suffices to consider the case when $\g$ is oriented so that $A_\Si(\eta)>0$. We further assume without loss of generality that $\g$ is parametrized by arclength.} 
Fix any small  $\epsilon>0$. Since $\gamma$ stays outside of $\Omega$ away from the two endpoints and is defined on a compact set, we may obtain an approximation $\gamma_\epsilon \in \mathcal B$ of $\gamma$ such that
\begin{enumerate}
\item \label{item:approx req 1} $|L(\gamma) - L(\gamma_\epsilon)|<\epsilon$ and $|A_\Sigma(\gamma) - A_\Sigma(\gamma_\epsilon)|<\epsilon$, and moreover
\item \label{item:approx req 2} the image of $\gamma_\epsilon$ is the union of finitely many piecewise $\mathcal{C}^2$ curves $\gamma_0, \gamma_1, \cdots \gamma_k$ where $\gamma_0 \in \mathcal B$ is embedded and $\gamma_i$ are closed and embedded for $i=1, \cdots, k$, where for $i,j \in \{0, \cdots,k\}$, $\gamma_i$ and $\gamma_j$ do not intersect except possibly meeting transversally at their endpoints.
\end{enumerate}
As $\gamma$ is of class $H^2$, and hence $C^1$,
such a $\gamma_\epsilon$ can be easily constructed to be in fact even piecewise linear;
 taking a fine enough subdivision $t_0=0<t_1< \cdots < t_i = i/k < \cdots<t_k=L(\gamma)$ of $[0,L(\gamma)]$ for a large integer $k$ and replacing $\tilde \gamma|_{[t_i, t_{i+1}]}$ with the line segment connecting $\tilde \gamma(t_i)$ and $\tilde \gamma(t_{i+1})$, we can get a piecewise linear curve which satisfies \eqref{item:approx req 1}. Then, up to slightly perturbing the vertices of the piecewise linear $\gamma_\epsilon$ to avoid any overlapping segments, $\gamma_\epsilon$ can be taken to satisfy \eqref{item:approx req 2}. Denote by $E_0$ the region bounded by $\gamma_0$ and $\Sigma$, and $E_k$ the region bounded by $\gamma_i$ for $i=1, \cdots, k$. By the usual isoperimetric inequality for closed curves we have $L(\gamma_i)^2 \ge 4\pi |E_i|$, and $L(\gamma_0) \ge I_\Omega(|E_0|)$. Therefore,
\[L(\gamma_\epsilon) = \sum_{i=0}^{k}L(\gamma_i) \ge I_\Omega(|E_0|) + \sum_{i=1}^{k} \sqrt{4\pi |E_i|}.\]
On the other hand, note that $|E_0| + |E_1| + \cdots + |E_k| \ge \eta-\epsilon$ since the $E_i$'s are counted with a sign in the algebraic area of $\gamma_\epsilon$. By \eqref{eqn: bounds on profile} we have that $\sqrt{4\pi a} \ge I_\Omega(a)$ for any $a>0$.  We can now use that the isoperimetric profile is sub-additive in the sense that $I_\Omega(a) + I_\Omega(b) \ge I_\Omega(a+b)$ for any $a,b>0$; see Lemma \ref{lem: quant sublinearity} for  a more general and quantitative version of this statement. Because $I_\Omega(a)$ is nondecreasing in $a$, combining all of the above we have that
\[L(\gamma_\epsilon) \ge I_\Omega(|E_0|) + \sum_{i=1}^{k} \sqrt{4\pi|E_i|} \ge \sum_{i=1}^{k} I_\Omega(|E_i|) \ge I_\Omega(|E_0| + \cdots + |E_k|) \geq I_\Omega(\eta-\epsilon).\]
Taking $\epsilon \to 0$ finishes the proof.
\end{proof}
\section{Proof of \Cref{lem: mollify}}\label{sec: technical appendix}
\begin{proof}[Proof of \Cref{lem: mollify}] {\it Step 1:} 
    Let $\ga:[0,1] \to \R^2 \setminus \Omega^\circ$ be a constant speed parametrization of $\partial F \setminus \Si$, so that $\ga(0), \ga(1) \in \Si$ and $\ga(t) \not \in \Si$ for $t \in (0,1)$. 
    For $N \in \mathbb{N}$ large to be fixed later, let $p_j =j/N$ for $j=0,\dots, N$ and let $\ga_1:[0,1]\to \R^2 $ be the polygonal curve defined as follows.
    For $j =1,\dots, N-2$, define $\ga_1|_{[p_j, p_{j+1}]}$ to be the  constant speed linear interpolation from $\ga(p_j)$ to $\ga(p_{j+1})$.
    Let $\ga_1(0)$ and $\ga_1(1)$  be the nearest point projections of $\ga(p_1)$ and $\ga(p_{N-1})$ on $\Si$ respectively. Define $\ga_1|_{[p_0,p_1]}$ as the constant speed linear interpolation from $\ga_1(0)$ to $\ga(p_1)$ and  $\ga_1|_{[p_{N-1},p_N]}$ as the constant speed linear interpolation from $\ga(p_{N-1})$ to $\ga(1)$.  By construction, $\ga_1$ is a piecewise linear convex curve whose endpoints meet $\Sigma$ orthogonally. Provided $N$ is chosen sufficiently large, $\ga_1(p)$ lies outside ${\Omega}$ for all $p\in(0,1)$. Together with $\Sigma$, $\gamma_1$ bounds an open and relatively convex set $F_1 \subset F.$ The errors
    \begin{equation}
            \label{eqn: est 1}
                d_H(\partial F, \partial F_1), \quad |F\setminus F_1|, \quad |L(\ga) - L(\ga_1)|
    \end{equation}
    can be made arbitrarily small by choosing $N$ sufficiently large.

    {\it Step 2:} The curve $\ga_1$ is smooth away from the corners at $p_1, \dots, p_{N-1}.$ We can smooth each of these corners in a $C^{2,1}$ fashion as follows. Choosing $\sigma \ll 1/N$, let $\ga_2 : [0,1] \to \R^2$ be the curve that is equal to $\ga_1$ outside of $\cup_{j-1}^{N-1} [p_j-\sigma, p_j + \si]$ and such that for each $j=1,\dots N-1$, $\ga_2|_{[p_j-\sigma, p_j + \si]}$ is defined as the cubic B\'{e}zier curve with parameters chosen so that, at the endpoints $p_j-\si$ and $p_j+\si$, the tangents match those of $\ga_1$ and the curvature vanishes. By construction, $\ga_2$ is a convex curve, and provided $\si$ is chosen sufficiently small, $\ga_2(p)$ lies outside of ${\Omega}$ for each $p\in(0,1)$. Since $\ga_1$ and $\ga_2$ agree in neighborhoods of their endpoints, $\ga_2$ meets $\Si$ orthogonally at its endpoints. Together with $\Sigma$, $\gamma_2$ bounds an open, relatively convex set $F_2 \subset F_1$, and the errors
    \begin{equation} \label{eqn: est 2}
    d_H(\partial F_2, \partial F_1), \quad |F_1\setminus F_2|, \quad |L(\ga_2) - L(\ga_1)|
       \end{equation}
    can be made arbitrarily small by choosing $\si$ sufficiently small.

    {\it Step 3:}
  Reparametrize $\ga_2$ on $[0,1]$ with orientation such that the normal $\nu_{\ga_2}$ to $\ga_2$ coincides with the outward unit normal to $F_2.$  
    For $\rho \geq 0$ to be chosen later, define $\ga_{3,\rho}: [0,1] \to \R^2$ as follows. For $p \in [\sigma, 1-\sigma]$, let 
   \begin{equation}
   \ga_{3,\rho} (p) = \ga_2(p) + \rho \nu_{\ga_2}(p)\,, 
     \end{equation}
     which is a $C^{2,1}$, embedded convex curve.
    Let $\ga_{3,\rho}(0)$ be the nearest point projection of $\ga_{3,\rho}(\rho)$ on $\Si$
    and define $\ga_{3,\rho}|_{[0,\sigma]}$ to be the constant speed linear interpolation from $\ga_{3,\rho}(0)$ to $\ga_{3,\rho}(\sigma)$.
    Define $\ga_{3,\rho}(1)$ and $\ga_{3,\rho}|_{[1-\sigma, 1]}$ analogously.
    By construction, $\ga_{3,\rho}(p) \in \R^2 \setminus{\Omega}$ for all $p \in (0,1)$ and $\g_{3,\rho}$ meets $\Si$ orthogonally. 

 Let $F_3=F_{3,\rho}$ be the open, connected region bounded by $\g_{3,\rho}$ and $\Si$.   
  We claim that  $F_3$ is relatively convex, provided $\delta_0$, $\rho$, $\sigma$, and $1/N$ are sufficiently small. To this end, we will show the set $G_{3,\rho}$ bounded by $\g_3$ and the segment joining $\g_3(0)$ and $\g_3(1)$ is convex. 
  First notice 
 that, by the convexity of $\Omega$ and orthogonal contact angle between $\Si$ and the segment $\gamma_2\vert_{[0,1/N-\sigma]}$,
 the  linear extension of the shifted segment $\ga_{3,\rho}|_{[{\sigma},1/N-{\sigma}]}$, which intersects $\Si$ if $\rho$ is small, 
 has contact angle at most $\pi/2$, and likewise for the other side.
 Consequently, the interior angles of $\ga_{3,\rho}$  at $p=\sigma$ and $p=1-\sigma$ are at most $\pi$.
 Up to replacing $\g_{3,\rho}$ by the curve obtained by running the corner-smoothing procedure (with a smaller $\sigma'$) of Step 2 at $p=\sigma$ and $p=1-\sigma$, we may also assume $\g_{3,\rho}$ is $C^{2,1}$.
 
 Consequently, $G_{3,\rho}$ is convex provided the turning angle of $\g_{3,\rho}$ is at most $2\pi$, or equivalently (given the orthogonal contact angle of $\g_{3,\rho}$), if the set of normals $A_\rho=\{ \nu_\Si(x) : x \in \Si \cap \partial F_{3,\rho}\}$ to $\Si$ lies in a half-circle of $\mathbb{S}^1.$
Choose $\delta_0>0$ according to \Cref{lem: sat flow hp}. Since $\delta_\eta(F) \leq \delta_0$, \Cref{lem: sat flow hp} and \Cref{rmk: d sigma bound} guarantee that the endpoints $\ga(0)$ and $\ga(1)$ of $\g$ cannot be antipodal points of $\Si$, and that the set of normals $\{ \nu_\Si(x) : x \in \Si \cap \partial F\}$ lies in a {\it strict} subset of a half-circle of $\mathbb{S}^1.$ 
By the continuity of the construction, 
 there exist $\bar{N}$ and $\bar{\rho}$ such that the same holds for $A_\rho$ provided  $\rho \leq \bar{\rho}$, $N\geq \bar{N}$, and $\sigma$ is small enough depending on $N$ as in Step 2.
 This yields the desired convexity.

Now, the area $|F_{3,\rho}|$ varies continuously and is monotonically increasing with respect to $\rho$. Moreover, there exists $\hat{N}$ such that for any choice of parameters $N>\hat{N}$ and $\sigma\ll 1/N$, there exists $\rho \leq \bar{\rho}$ such that  $|F_{3,\rho}| >\eta$. Since $F_{3,0} = F_2 \subset F$ has area at most $\eta,$ for each  $N$ (and $\sigma$ depending on $N$ as in step 2) we may choose $\rho_0$ such that $|F_{3,\rho_0}| = \eta.$ Let $\ga_3 = \ga_{3, \rho_0}$. From the construction we see that $\rho_0\to 0$ as $1/N \to 0$ and that 
    \begin{equation}
        \label{eqn: est 3a}
         d_H(\gamma_2, \ga_3), \quad |F_2\Delta  F_3|, \quad |L(\ga_2) - L(\ga_3)|
    \end{equation}
    can be made arbitrarily small by choosing $1/N$ (and thus $\rho_0$) sufficiently small. Choosing $1/N$ small enough depending on $\e$ and taking $F_\e = F_3$, the proof follows by combining \eqref{eqn: est 1}, \eqref{eqn: est 2}, and \eqref{eqn: est 3a}.
\end{proof} 

\bibliography{literature}
\bibliographystyle{plain}

\end{document}